\def\algbackskip{\hskip-\ALG@thistlm}
\newcommand{\cc}{{\normalfont{\text{c}}}}
\newcommand{\loc}{{\normalfont{\text{loc}}}}
\newcommand{\br}{{\normalfont{\boldsymbol{r}}}}
\DeclareMathOperator*{\argmin}{arg\,min}
\DeclareMathOperator*{\trace}{trace}
\newcommand{\dd}{{\normalfont{\text{d}}}}
\definecolor{ao}{rgb}{0.0, 0.5, 0.0}
\newcommand*{\encircled}[1]{\relax\ifmmode\mathpalette\@encircled@math{#1}\else\@encircled{#1}\fi}
\newcommand*{\@encircled@math}[2]{\@encircled{$\m@th#1#2$}}
\newcommand*{\@encircled}[1]{%
  \tikz[baseline,anchor=base]{\node[draw,circle,outer sep=0pt,inner sep=.2ex] {#1};}}
\newcommand{\Cov}{\mathbb{C}\!\operatorname{ov}}
\DeclareSymbolFont{extraup}{U}{zavm}{m}{n}
\DeclareMathSymbol{\varheart}{\mathalpha}{extraup}{86}
\DeclareMathSymbol{\vardiamond}{\mathalpha}{extraup}{87}
\newcommand{\half}{\frac{1}{2}}
\newcommand{\norm}[1]{\left \lVert #1 \right \rVert}
\newcommand{\snorm}[1]{\left \lvert #1 \right \rvert}
\newcommand{\IR}{\mathbb{R}}
\newcommand{\IC}{\mathbb{C}}
\newcommand{\IN}{\mathbb{N}}
\newcommand{{\diff}}[1]{{\normalfont{\text{d}} #1}}
\tikzset{>=latex} %
\definecolor{myred}{RGB}{230,75,53}
\definecolor{myblue}{RGB}{77,187,213}
\definecolor{mygreen}{RGB}{0,160,135}
\colorlet{myorange}{orange!70!red!60!black}
\colorlet{mydarkred}{myred!30!black}
\colorlet{mydarkblue}{myblue!40!black}
\colorlet{mydarkgreen}{green!50!black}
\tikzstyle{node}=[thick,circle,draw=myblue,minimum size=22,inner sep=0.5,outer sep=0.6]
\tikzstyle{node in}=[node,green!20!black,draw=mygreen!30!black,fill=mygreen!40]
\tikzstyle{node hidden}=[node,blue!20!black,draw=myblue!30!black,fill=myblue!40]
\tikzstyle{node convol}=[node,orange!20!black,draw=myorange!30!black,fill=myorange!20]
\tikzstyle{node out}=[node,red!20!black,draw=myred!30!black,fill=myred!40]
\tikzstyle{connect}=[thick,black] %
\tikzstyle{connect arrow}=[-{Latex[length=4,width=3.5]},thick,mydarkblue,shorten <=0.5,shorten >=1]
\tikzset{ %
  node 1/.style={node in},
  node 2/.style={node hidden},
  node 3/.style={node out},
}
\def\nstyle{int(\lay<\Nnodlen?min(2,\lay):3)}
\newcommand{\OV}{\mathsf{V}}
\newcommand{\OK}{\mathsf{K}}
\newcommand{{\D}}{\normalfont{\text{D}}}
\newcommand{{\G}}{\normalfont{\text{G}}}
\newcommand{{\U}}{\normalfont{\mathbb{U}}}
\newcommand{{\I}}{\normalfont{\text{I}}}
\newcommand{{\A}}{\normalfont{\text{A}}}
\newcommand{\isdef}{\mathrel{\mathrel{\mathop:}=}}
\newcommand{\defis}{\mathrel{=\mathrel{\mathop:}}}
\newcommand{\bchi}{\boldsymbol\chi}
\newcommand{\bu}{{\bf  u}}
\newcommand{{\bx}}{{\bf x}}
\newcommand{{\bz}}{{\bf z}}
\newcommand{\bxref}{\widehat{\bx}}
\newcommand{\yref}{\widehat{\y}}
\newcommand{{\bvarphi}}{{\boldsymbol{\varphi}}}
\newcommand{\OA}{\mathsf{A}}
\newcommand{{\N}}{\normalfont{\text{N}}}
\newcommand{{\y}}{{\boldsymbol{{y}}}}
\newcommand{{\h}}{{\boldsymbol{{h}}}}
\newcommand{{\z}}{{\boldsymbol{z}}}
\newcommand{{\bc}}{{\bf c}}
\newcommand{{\bd}}{{\bf d}}
\newcommand{\dotp}[2]{\left ( #1,#2 \right )}
\theoremstyle{remark}
\newtheorem{remark}{Remark}
\newtheorem{assumption}[theorem]{Assumption}
\numberwithin{equation}{section}
\begin{document}

\title{Fully discrete analysis of the Galerkin POD neural network approximation with application to 3D acoustic wave scattering}

\headers{Fully discrete analysis of the Galerkin POD-NN}{J. D\"{o}lz and F. Henr\'iquez }

\author{
J\"{u}rgen D\"{o}lz\thanks{Institute for Numerical Simulation, University of Bonn, Friedrich-Hirzebruch-Allee 7, 53115 Bonn, Germany (\email{doelz@ins.uni-bonn.de}).}
\and 
Fernando Henr\'iquez\thanks{Institute for Analysis and Scientific Computing, Vienna University of Technology, Wiedner Hauptstra{\ss}e 8-10, A-1040 Wien, Austria. (\email{fernando.henriquez@asc.tuwien.ac.at}).
}}

\maketitle

\begin{abstract}
In this work, we consider the approximation of parametric maps
using the so-called Galerkin POD-NN method.
This technique combines the computation of a reduced basis via proper orthogonal decomposition (POD)
and artificial neural networks (NNs) for the construction of fast surrogates of said parametric maps.
In contrast to the existing literature, which has studied the approximation properties of this kind of architecture on a continuous level, we provide a fully discrete error analysis of this approach. More precisely, our estimates also account for discretization errors during the construction of the NN architecture. We consider
the number of reduced basis in the
approximation of the solution manifold, truncation in the parameter space, and, most importantly,
the number of samples in the computation of the reduced space, together with the effect of the use of NNs in the approximation of the reduced coefficients.
Following this error analysis, we provide a-priori bounds on the required POD tolerance, the resulting POD ranks, and NN parameters to maintain the order of convergence of quasi Monte Carlo sampling techniques.

We conclude this work by showcasing the applicability of this method through a practical industrial application: the sound-soft 
acoustic scattering problem by a parametrically defined scatterer in three physical dimensions.  
\end{abstract}

\section{Introduction}\label{sec:introduction}
\subsection{Motivation}
\emph{Surrogate models} are key ingredients for the success of many-query applications where expensive computational models need to be repeatedly evaluated.
Indeed, if the underlying model, obtained using standard discretization methods such as the Finite Element, Finite Volume, Finite Difference, or Boundary Element method (the latter for the numerical approximation of boundary integral equations, or BIEs), is computationally demanding, the repeated use of this model becomes cost-prohibitive.

A good surrogate model provides a fast approximation of the original problem, while
guaranteeing a certified level of accuracy with respect to the so-called \emph{high-fidelity} solution.
Motivated by partial differential equations with random input data, inverse problems, and optimal control, the subject of this article are surrogate models to the \emph{parameter-to-solution map}
arising from parametric PDEs and BIEs.

More precisely, given Hilbert spaces $\mathcal{X},\mathcal{Y}$
and a compact subset $\mathcal{U}\subset \mathcal{X}$, referred as the 
\emph{parameter space}, we consider the problem of finding for each in 
$\nu\in \mathcal{U}$ the solution $u(\nu)\in \mathcal{Y}$ to the following
problem cast in variational form
\begin{equation}\label{eq:motivationvarprob}
	\mathsf{a}(u(\nu),v;\nu)
	=
	\mathsf{f}(v;\nu),
	\quad
	\forall v \in \mathcal{Y},
\end{equation}
where $\mathsf{f}(\cdot;\nu) \in \mathcal{Y}'$, and $\mathsf{a}(\cdot,\cdot;\nu)$
represents in a general framework either a linear or non-linear PDE or BIE. 
Provided that \cref{eq:motivationvarprob} is well-posed for each input $\nu \in \mathcal{U}$
one can define the \emph{parameter-to-solution} map $\mathcal{U}\ni \nu \mapsto u(\nu) \in \mathcal{Y}$.

The construction of surrogate models to the (nonlinear) parameter-to-solution map given through \cref{eq:motivationvarprob} is obstructed by several issues. First, the infinite dimensionality of the parameter space requires appropriate truncation for numerical computations. Second, the variational problem can usually only be solved approximately by means of a Galerkin approach which often requires many degrees of freedom. And third, the solution manifold
\begin{equation}
	\mathcal{M}
	\coloneqq  
	\left\{u(\nu) \in \mathcal{Y}: \nu \in \mathcal{U} \right\}
\end{equation}
is itself infinite or high dimensional without any
further measures.

The recent success of deep learning techniques in several fields of
science and engineering has led to many works suggesting the use \emph{artificial neural networks} in the approximation of the
parameter-to-solution map, a growing field referred to as
\emph{Operator Learning} or \emph{Neural Operators}.

\subsection{Related work}\label{sec:introduction_related_work}
The so-called ``training'' of plain vanilla neural networks can be interpreted as solving a non-linear regression problem, i.e., to fit the parameters of a specific non-linear function $\mathbb{R}^N\to\mathbb{R}^M$ (in this case the neural network) such that a loss functional is minimized \cite{BGG2024,GK2022}. It is well known that neural networks allow for universal approximation of properties, that is, they can approximate certain function classes up to arbitrary accuracy as long as the network is wide enough; see, e.g., \cite{BGKP2019,Cyb1989,DHP2021,Hor1991,PV2018}. 
Many results provide guaranteed convergence rates \cite{de2022generic,DLM21,HOS22,SZ2019}, with a particular focus on beating the so-called \emph{curse of dimensionality} in the parameter space.

However, using NNs to approximate the parameter-to-solution map requires architectures that account for the high, or even infinite, dimensionality of both the input and output spaces. In the last years, several of these have been proposed, e.g. DeepONets
\cite{lu2021learning,lanthaler2022nonlinear,lanthaler2022error,wang2021learning},
Fourier Neural Operators \cite{guibas2021adaptive,kovachki2021universal,li2020fourier},
graph neural operators \cite{pichi2024graph}, DIPNets \cite{ODC+2022,OVCG2022},
convolutional neural operators \cite{franco2023approximation,raonic2023convolutional}, and PCA-nets \cite{BHKS2021,herrmann2024neural,HU18,lanthaler2023operator,SZ2019}.
The latter is also known as the Galerkin POD-NN, and is the focus of this work.
Though not the framework considered in this work, we mention in passing 
that the Galerkin POD-NN has been extended to time-dependent 
problems, see e.g.~\cite{guo2019data,wang2019non,wang2020recurrent}.

In order to appropriately deal with the parameter-to-solution map
$\mathcal{U} \ni \nu \mapsto u(\nu) \in \mathcal{Y}$ and effectively build a suitable method that leverages on NNs
for its approximation, the complexity of both the input
and output spaces needs to be appropriately described by a 
\emph{encoder-decoder} pair \cite{BHKS2021,FMZ2022,HU18,KPRS2022,ODC+2022}.
That is, one construct maps $\mathscr{E}: \mathcal{U} \rightarrow \mathbb{R}^N $, $\boldsymbol{\pi}: \mathbb{R}^N \rightarrow\mathbb{R}^M $, and
$\mathscr{D}: \mathbb{R}^M \rightarrow \mathcal{M}$ such that
\begin{equation}\label{eq:diagram}
	\mathcal{X}
	\supset
	\mathcal{U}
	\xrightarrow{\text{Encoder }\mathscr{E} }	
	\mathbb{R}^N
	\xrightarrow{\text{Neural Network }\boldsymbol{\pi}}	
	\mathbb{R}^M
	\xrightarrow{\text{Decoder }\mathscr{D}}
	\mathcal{M}
	\subset
	\mathcal{Y}.
\end{equation}
Then, for each input $\nu \in \mathcal{U}$ one can construct
an approximation of the parameter-to-solution map  which reads
as $u(\nu) \approx \left(\mathscr{D} \circ \boldsymbol{\pi} \circ \mathscr{E}\right) (\nu)$.
Indeed, the task of operator learning boils down to defining the encoder-decoder pair and
computing an appropriate NN $\boldsymbol{\pi}$. This NN is of hopefully moderate size.
This is understood not only in the sense that $N$ and $M$ are controlled, but also in terms of the overall network size, which is determined by the number of trainable parameters.

The Galerkin POD-NN method relies on the combination
of projection-based model order reduction techniques
for the construction of the decoder, in particular, the reduced basis method \cite{hesthaven2016certified,QMNMOR2016} and, of course, NNs.
The reduced basis method, which is at the core of the methodology presented here, follows a two phase paradigm--online
and offline--for the swift and efficient evaluation of the
parameter-to-solution map.
In the offline phase, a basis of
reduced dimension is computed by properly sampling the space $\mathcal{U}$ and performing
a proper orthogonal decomposition (POD), although \emph{greedy} strategies could also be put in place. 
These allow for the identification of the most important modes driving the dynamics of the parameter-to-solution map.
Next, in the online phase, the evaluation of the parameter-to-solution map for a given parametric input is computed in a variational fashion as an element of the reduced space. For this purpose,
hyper-reduction techniques, such as the empirical interpolation method \cite{barrault2004empirical} and its discrete counterpart
\cite{chaturantabut2010nonlinear}, can be used. However, these techniques are intrusive in nature, and their implementation is not trivial. Instead, the idea of the Galerkin-POD NN, originally pointed out in \cite{HU18}, is to use a NN for the approximation of the reduced coefficients,
i.e.~for the computation of the central part in \cref{eq:diagram}. This completely decouples the online and offline phases, and makes the approximation of the reduced coefficients purely \emph{data-driven}.

Summarized, the Galerkin-POD NN provides an \emph{algorithmically implementable} and \emph{computationally feasible} construction of the decoder. What remains to be understood is the interplay of the approximation errors of the neural network approximation, the Galerkin-POD, and the Galerkin approximation to the variational problem and how to balance them to obtain an accurate and computationally efficient approximation scheme.

\subsection{Contributions}
The goal of this paper is to go beyond NN approximation rates on a continuous level and to provide fully discrete a-priori approximation rates of the neural network approximation of parameter-to-solution maps to \cref{eq:motivationvarprob} which account for all approximation errors occurring in \emph{algorithmically feasible} computations. To this end, we consider the Galerkin POD-NN architecture when the parameter-to-solution map and its Galerkin approximation are $(\boldsymbol{b},p,\varepsilon)$-holomorphic and the samples are drawn according to quasi-Monte Carlo (QMC) rules, as for example suggeted in \cite{LMRS2021,LMR2020,MR2021}. For this setting we
\begin{enumerate}
	\item[(i)]
	Provide a-priori approximation rates for the Galerkin POD
	reduced basis which account for
    \begin{itemize}
        \item the truncation in the parameter space,
        \item the Galerkin-error for \cref{eq:motivationvarprob},
        \item and the sampling error in the high-dimensional parameter space.
    \end{itemize}
	\item[(ii)]
	Provide a-priori approximation rates for the \emph{algorithmically implementable}
	and \emph{computationally feasible} Galerkin POD-NN using $\tanh$ NNs up to the training error with dimension-robust convergence rates. Our NN approximation rates account for all arising discretization errors, including
    \begin{itemize}
        \item the truncation in the parameter space,
        \item the Galerkin-error for \cref{eq:motivationvarprob},
        \item the sampling error in the high-dimensional parameter space,
        \item and the neural network approximation error.
    \end{itemize}
	\item[(iii)]
	Demonstrate the validity of our approximation estimates
	on an industrially relevant application: acoustic wave scattering
	by random domains in three spatial dimensions.
\end{enumerate}

In addition, as opposed 
to existing works addressing these issues, 
we employ these theoretically obtained results to guide the NN training in the implementation mentioned in (iii).

\subsection{Outline}
This work is structured as follows. In \cref{sec:projection_ROM} we recall important concepts
concerning the analytic smoothness of the parametric maps upon the parametric inputs. In addition, we recall the projection-based MOR and, in particular, the reduced basis method.
In \cref{sec:analysis_galerkin_POD_RBM}, we provide a complete error analysis for the Galerkin-POD method. In \cref{sec:fully_discrete_GPOD_NN}
we formally introduce NNs and provide a complete error analysis for the approximation of the Galerkin-POD NN.
In \cref{eq:sound_soft_scattering} we introduce the model problem to be considered for the numerical experiments. We conclude 
this work by presenting a set of numerical experiments in \cref{sec:numerical_experiment} and provide some final 
remarks in \cref{sec:concluding_remarks}.

\section{Parametric Problems and projection-based MOR}
\label{sec:projection_ROM}
In this section, we recall important aspects concerning parametric PDEs, parametric holomorphy, and the so-called projection-based \emph{model-order reduction (MOR)}, in particular the reduced basis method. 

\subsection{Encoder-decoder construction}
Firstly, we discuss the construction of both the encoder
and decoder in our approach. For the former, loosely speaking,
we assume that each element of the parameter space $\mathcal{U}$
can be represented through a sequence of real numbers.
More precisely, we set $\mathbb{U} \coloneqq [-1,1]^{\mathbb{N}}$
and assume that for each element of $\nu \in \mathcal{U} \subset \mathcal{X}$
there exists $\y \in \mathbb{U}$ such that $\nu =T(\y)$, where
$T: \mathbb{U} \mapsto \mathcal{U}$. 

Typically, the map is chosen to be affine with respect to the parametric input, i.e.
it is of the form
\begin{equation}\label{eq:map_T}
	T(\y)
	=
	\nu_0
	+
	\sum_{j\geq1}
	y_j
	\nu_j
	\in \mathcal{U},
	\quad
	\y \in \mathbb{U},
	\quad
	\left\{\norm{\nu_j}_\mathcal{X}\right\}_{j\in \mathbb{N}} \in \ell^1(\mathbb{N}),
\end{equation}
where $\{\nu_0,\nu_1,\dots\} \subset \mathcal{X}$,
together with its dimension--truncated counterpart
\begin{equation}\label{eq:map_T_s}
	T_s(\y)
	=
	\nu_0
	+
	\sum_{j=1}^{s}
	y_j
	\nu_j
	\in \mathcal{U},
	\quad
	\y \in \mathbb{U}^{(s)}.
\end{equation}

As discussed in \cite[Section 1.2]{CD15}, for the case of $\mathcal{X}$ a Banach space
and $\mathcal{U}$ a compact subset, it can be proven that maps $T$ of the form described in \cref{eq:map_T} do indeed exist. However, as discussed therein 
this representation might not be unique and not every element of $\mathcal{U}$
might admit one. This issue is, for example, addressed in \cite{herrmann2024neural}
by resorting to frames and Riesz bases in the construction of representation of
the form \cref{eq:map_T}.

Furthermore, in the statistical context and under the assumption that $\mathcal{X}$
is a Hilbert space, an expansion can be constructed as in \cref{eq:map_T}
using the Karhunen--Lo\`{e}ve theorem \cite{Loe1978}, which is indeed the approach that we follow in the application presented in \cref{eq:sound_soft_scattering}.
Herein, we assume that
\begin{equation}
    \mathcal{U} \coloneqq \{\nu = T(\y): \; \y \in \mathbb{U}\},
\end{equation}
and that $\{\nu_0,\nu_1,\dots\}$ form a basis of $\mathcal{U}$, thus rendering
the encoder
\begin{equation}
	\mathscr{E}(\nu) = T^{-1}(\nu)
\end{equation}
well-defined.
For the construction of the decoder, as extensively discussed in
\cref{sec:introduction_related_work} we use a reduced basis $\{\zeta^{\text{(rb)}}_1,\dots, \zeta^{\text{(rb)}}_M\}$ of dimension $M$ constructed using the 
POD approach. Then, the decoder reads as follows
\begin{equation}
    \mathscr{D}(x)
    \coloneqq
    \sum_{i=1}^{M}
    x_i
    \zeta^{\text{(rb)}}_i,
    \quad
    x= \{x_i\}_{i=1}^{M}. 
\end{equation}
The remainder of this section is dedicated to the computational construction of such a reduced basis for parametrically holomorphic maps.

\subsection{Parametric holomorphy}
For $s>1$ we define 
the Bernstein ellipse
\begin{align}
	\mathcal{E}_s
	\coloneqq 
	\left\{
		\frac{z+z^{-1}}{2}: \; 1\leq \snorm{z}\leq s
	\right \} 
	\subset \IC.
\end{align}
This ellipse has foci at $z=\pm 1$ and semi-axes of length 
$a\coloneqq  (s+s^{-1})/2$ and $b \coloneqq  (s-s^{-1})/2$.
In addition, we define the tensorized poly-ellipse
\begin{align}
	\mathcal{E}_{\boldsymbol{\rho}} 
	\coloneqq  
	\bigotimes_{j\geq1} 
	\mathcal{E}_{\rho_j} \subset \IC^{\mathbb{N}},
\end{align}
where $\boldsymbol\rho \coloneqq  \{\rho_j\}_{j\geq1}$
is such that $\rho_j>1$, for $j\in \mathbb{N}$.

\begin{definition}[{\cite[Definition 2.1]{CCS15}}]\label{def:bpe_holomorphy}
Let $X$ be a complex Banach space equipped with the norm $\norm{\cdot}_{X}$. 
For $\boldsymbol{b} \in \ell^p(\mathbb{N})$ with $p\in(0,1)$ and $\varepsilon>0$, we say that map 
$\mathbb{U}  \ni  \y \mapsto  u(\y)  \in  X$
is \emph{$(\boldsymbol{b},p,\varepsilon)$-holomorphic} if and only if:
\begin{enumerate}
	\item\label{def:bpe_holomorphy1}
	The map $\mathbb{U} \ni {\y} \mapsto u(\y) \in X$ is uniformly bounded.
	\item\label{def:bpe_holomorphy2}
	  For any sequence 
	$\boldsymbol\rho\coloneqq \{\rho_j\}_{j\geq1}$ 
	of numbers strictly larger than one that is
	$(\boldsymbol{b},\varepsilon)$-admissible, i.e.~satisfying
	$\sum_{j\geq 1}(\rho_j-1) b_j  \leq  \varepsilon$,
	the map $\y \mapsto u(\y)$ admits a complex
	extension $\z \mapsto u(\z)$ 
	that is holomorphic with respect to each
	variable $z_j$ on a set of the form 
	\begin{align}
		\mathcal{O}_{\boldsymbol\rho} 
		\coloneqq  
		\displaystyle{\bigotimes_{j\geq 1}} \, \mathcal{O}_{\rho_j},
	\end{align}
	where
	\begin{equation} 
	\mathcal{O}_{\rho_j}=
	\{z\in\IC\colon\operatorname{dist}(z,[-1,1])<\rho_j-1\}.
	\end{equation}
	\item
	There exists a constant $C_\varepsilon>0$ such that this extension is bounded on $\mathcal{E}_{\boldsymbol\rho}$ according to
	\begin{equation}
		\sup_{\z\in \mathcal{E}_{\boldsymbol{\rho}}} \norm{u(\z)}_{X}  \leq C_\varepsilon.
	\end{equation}
\end{enumerate}
Without loss of generality, we assume that the sequence $\boldsymbol{b}$ is nonincreasing.
\end{definition}

\subsection{Model Problem: Parametric Variational Problems}\label{sec_rom}
Let $X$ be a \emph{complex} Hilbert space equipped with 
the inner product $\dotp{\cdot}{\cdot}_X$, induced norm
$\norm{\cdot}_X$, and with the associated Banach space of continuous
sesquilinear forms
\begin{equation}
	B(X)
	=
	\left\{
		\mathsf{a}\colon X\times X\to\IC: \;\; \|\mathsf{a}\|_{\text{op}}<\infty
	\right\}
\end{equation}
equipped with the norm
\begin{equation}
	\norm{\mathsf{a}}_{\text{op}}
	\coloneqq
	\sup \limits_{v,w \in X \backslash \{0\}} 
	\frac{|\mathsf{a}(v,w)|}{\|v\|_X \|w\|_X}.
\end{equation}
For each $\y \in \mathbb{U}$, we consider the paramerized variational problem of finding $u(\y) \in X$
such that
\begin{equation}
	\label{eq:continuous_var_prb_a}
	\mathsf{a}(u(\y),v;\y)
	=
	\mathsf{f}(v;\y),
	\quad
	\forall v \in X,
\end{equation}
where $\mathsf{f}(\cdot;\y) \in X'$ and $\mathsf{a}(\cdot,\cdot;\y)\in B(X)$. We assume $\mathsf{f}(\cdot;\y)$ and $\mathsf{a}(\cdot,\cdot;\y)$ to be uniformly continuous, i.e., to satisfy
\begin{equation}\label{eq:lcontinuity}
	\snorm{\mathsf{f}(v;\y)}\leq\gamma\|v\|_X,
	\qquad
	\forall v\in X,\quad \y\in\mathbb{U},
\end{equation}
and
\begin{equation}\label{eq:acontinuity}
	\snorm{\mathsf{a}(u,v;\y)}\leq\overline{\alpha}\|u\|_X\|v\|_X,
	\qquad
	\forall u,v\in X,\quad \y\in\mathbb{U},
\end{equation}
we assume $\mathsf{a}(\cdot,\cdot;\y)$ to be uniformly inf-sup stable, i.e., to satisfy
\begin{equation}\label{eq:ainfsup}
	\inf_{u\in X}\sup_{v\in X}
	\frac{\snorm{\mathsf{a}(u,v;\y)}}{\|u\|_X\|v\|_X}\geq\underline{\alpha},
	\qquad
	\forall u,v\in X,\quad \y\in\mathbb{U},
\end{equation}
for some constants $\gamma\in(0,\infty)$ and $0<\underline{\alpha}\leq\overline{\alpha}<\infty$ which are independent of $\y$, and we assume that for all $v\in X\setminus\{0\}$, $\y\in\mathbb{U}$, there exists $u\in X$ such that $\mathsf{a}(u,v;\y)\neq 0$.
Under these conditions, the Babu\v{s}ka-Aziz theorem implies that for each $\y \in \mathbb{U}$
there exists a bounded solution operator $\mathsf{S}(\cdot,\y) \colon X'\to X$ for each $\y \in \mathbb{U}$ with operator norm uniformly bounded on $\mathbb{U}$.

For the Galerkin discretization of \cref{eq:continuous_var_prb_a} let 
$\{X_h\}_{h>0}\subset X$ be a sequence of one-parameter
finite-dimensional subspaces that are densely embedded in $X$. The discrete variational formulation of \cref{eq:continuous_var_prb_a} is then to find $u_h(\y) \in X_h$
such that
\begin{equation}
\label{eq:discrete_var_prb_a}
    \mathsf{a}(u_h(\y),v_h;\y)
    =
    \mathsf{f}(v_h;\y),
    \quad
    \forall v_h \in X_h.
\end{equation}
We note that $\mathsf{a}(\cdot,\cdot;\y)$ and $\mathsf{f}(\cdot;\y)$ are uniformly continuous on $X_h\subset X$ with the same constants as in \cref{eq:lcontinuity} and \cref{eq:acontinuity} and assume $\mathsf{a}(\cdot,\cdot;\y)$ to be uniformly inf-sup stable on $X_h$, i.e., we assume that it holds
\begin{equation}\label{eq:ainfsupdisc}
	\inf_{w_h\in X_h}\sup_{v_h\in X_h}
	\frac{\mathsf{a}(w_h,v_h;\y)}{\|w_h\|_X\|v_h\|_X}\geq\underline{\alpha},
	\qquad
	\y\in\mathbb{U},
\end{equation}
with (without loss of generality) the same constant as in \cref{eq:acontinuity},
which is independent of the discretization parameter $h$. We further assume that for all $v_h\in X_h\setminus\{0\}$, $\y\in\mathbb{U}$, there exists $u_h\in X_h$ such that $\mathsf{a}(u_h,v_h;\y)\neq 0$.
Again, the Babu\v{s}ka-Azis theorem implies that for each $\y \in \text{U}$ there exists a discrete solution
operator $\mathsf{S}_{h}(\cdot,\y)\colon X'\to X_h$ for each $\y \in \mathbb{U}$ whose operator norms are uniformly bounded on $\mathbb{U}$ by the same constant as for the continuous case.

Moreover, in the following we assume that
\begin{equation}\label{eq:bpeblflf}
\mathsf{a}\colon\mathbb{U}\to B(X),\qquad
\mathsf{f}\colon\mathbb{U}\to X'
\end{equation}
are $(\boldsymbol{b},p,\varepsilon)$-holomorphic and continuous mappings in the sense of \cref{def:bpe_holomorphy}. These assumptions then imply that the \emph{parameter-to-solution map} $\y\mapsto u(\y)$ defined through \cref{eq:continuous_var_prb_a} and the \emph{discrete parameter-to-solution map} $\y\mapsto u_h(\y)$ defined through \cref{eq:discrete_var_prb_a}
are also $(\boldsymbol{b},p,\varepsilon)$-holomorphic and continuous, see, e.g.,~\cite{CCS15}.

\subsection{Proper Orthogonal Decomposition}\label{sec:POD_Exact}
Usually, the numerical approximation of $u_h(\y) \in X_h$
for each instance of the parametric input $\y \in \mathbb{U}$ is computationally demanding, thus rendering any application
that requires a repeated evaluation of the parameter-to-solution map prohibitively expensive. 

Consider the \emph{discrete solution manifold}
\begin{equation}
	\mathcal{M}_h\coloneqq\{u_h(\y)\colon\y\in\mathbb{U}\} \subset X_h.
\end{equation}
We aim to approximate $\mathcal{M}_h$ by low-dimensional linear subspaces following the reduced basis method, see, e.g., \cite{OR2016,QMNMOR2016} and the references therein.  
More precisely, we seek a $J$-dimensional subspace $X_{h,J}\subset X_h$
minimizing the projection error in the $L^2(\mathbb{U};X)$ sense,~i.e.
\begin{equation}\label{eq:rom_basis_cont}
	X_{h,J}^{(\text{rb})}
	=
	\argmin_{
	\substack{X_{h,J} \subset X_h\\\dim X_{h,J}\leq J}}
	\varepsilon_{h}
	(X_{h,J}),
\end{equation}
where
\begin{equation}\label{eq:error_measure_continuous}
\begin{aligned}
	\varepsilon_h
	\left(
        X_{h,J}
	\right)
    &
	\coloneqq
	\|u_h- \mathsf{P}_{X_{h,J}}u_h\|_{L^2(\mathbb{U};X)}^2
    \\
    &
	=
	\int\limits_{\y \in \mathbb{U}}
	\norm{
		u_h(\y)
		-
		\mathsf{P}_{X_{h,J}}
		u_h(\y)
	}^2_X
	\mu(\text{d}\y).
\end{aligned}
\end{equation}
Therein, we define the operator $\mathsf{P}_{X_{h,J}}: X \rightarrow X_{h,J}$ 
as the orthogonal projection operator onto $X_{h,J}$ with respect to $(\cdot,\cdot)_X$
and introduce the following tensor product uniform measure in $\mathbb{U}$
\begin{equation}
    \mu(\text{d}\y)
    =
    \bigotimes_{j\in \mathbb{N}} \frac{\text{d}y_j}{2}.
\end{equation}
We also note that \cref{def:bpe_holomorphy1} in the definition of the $(\boldsymbol{b},p,\varepsilon)$-holomorphy implies that $u_h\in L^2(\mathbb{U};X_h)$.
Therefore, the operators
\begin{align}
	\mathsf{T}_h\colon& L^2(\mathbb{U})\to X_h,&
	g\mapsto
	\mathsf{T}_h g 
	&=
	\int_{\mathbb{U}}
	u_h(\y) g(\y) 
	\mu(\dd\y),\\
	\mathsf{T}^\star_h\colon& X_h\to L^2(\mathbb{U}),&
	x\mapsto
	\mathsf{T}^\star_h x
	&=
	\big(u_h(\y),x\big)_X
\end{align}
are Hilbert-Schmidt ones, and thus compact. This makes the integral operator
\begin{align}\label{eq:continuousio}
	\mathsf{K}_h\colon X_h\to X_h,\qquad
	x\mapsto
	\mathsf{K}_hx
	=
	\mathsf{T}_h\mathsf{T}^\star_h x
	=
	\int_{\mathbb{U}}u_h(\y)\big(u_h(\y),x\big)_X \mu(\dd\y),
\end{align}
compact, self-adjoint, and positive definite. 
Consequently, it has a countable sequence of eigenpairs
$(\zeta_{h,i},\sigma_{h,i}^2)_{i=1}^r \in X_h\times\mathbb{R}_{\geq0}$,
being $r \in \mathbb{N}$ the rank of the operator $\mathsf{T}_h$, with the eigenvalues accumulating at zero. In the following, we assume that $\sigma_{h,1}\geq \sigma_{h,2}\geq \cdots\geq \sigma_{h,r}\geq 0$.
Moreover, it is well-known that the span of the eigenfunctions to the $J$ largest eigenvalues, referred to as \emph{reduced basis},
\begin{align}\label{eq:rb_exact}
	X_{h,J}^{(\text{rb})}
	=
	\operatorname{span}
    \left\{
        \zeta_{h,1}
        ,\ldots,
        \zeta_{h,J}
    \right\}
	\subset X_h,
\end{align}
minimizes the projection error \cref{eq:error_measure_continuous}, that is
in the $L^2(\mathbb{U};X_h)$ sense,
among all $J$-dimensional subspaces of $X_h$ 
of dimension at most $J$ to
\begin{align}\label{eq:bounderrmeas}
    \varepsilon_h
    \left(
        X_{h,J}^{(\text{rb})}
    \right)
    =
    \sum_{i=J+1}^r\sigma_{h,i}^2.
\end{align}

Recall that for a compact subset $\mathcal{K}$ of a Banach space
$X$ the Kolmogorov's width is defined for $J \in \IN$ as  
\begin{align}
	d_J(\mathcal{K},X)
	\coloneqq
	\inf _{\substack{X_J\subset X\\\operatorname{dim}\left(X_J\right) \leq J}} 
	\sup _{v \in \mathcal{K}} 
	\inf_{w \in X_J}
	\norm{v-w}_X.
\end{align}

Considering that in our case $X$ is a Hilbert space, one can readily observe that
$\sqrt{\varepsilon_h \left( X_{h,J}^{(\text{rb})} \right)} \leq d_J(\mathcal{M}_h,X_h)$. 
In \cite[Theorem 2.1]{DLS2016}, the decay of the
Kolmogorov's width under the application of holomorphic maps has been studied and convergence rates are provided. 
In this work, we restrict ourselves to a slightly more specific setting. That is, we work under the assumption that the parameter-to-solution map is $(\boldsymbol{b},p,\varepsilon)$-holomorphic. As a consequence of this property, by performing a multivariate 
polynomials expansion as in \cite[Corollary 5.2]{dolz2024parametric}, one can show that
\begin{equation}\label{eq:rbbestNterm}
	\varepsilon_h
    \left(
        X_{h,J}^{(\text{rb})}
    \right)
	\lesssim
	J^{-2\left(\frac{1}{p}-1\right)},
\end{equation}
with an implicit constant depending only on $p\in (0,1)$ and $\boldsymbol{b}$, however not on $h$ or $J$.

\subsection{Empirical POD}
\label{sec:galerkin_POD}
The construction of the reduced basis $X_{h,J}^{(\text{rb})}$
introduced in \cref{eq:rb_exact} as described in \cref{sec:POD_Exact}
is not feasible in practice as $\mathsf{K}_h$ is not computationally accessible.
To overcome this issue, one seeks a $J$-dimensional subspace
\begin{equation}
	X_{h,s,N,J}^{(\text{rb})}
	=
	\argmin_{
	\substack{X_{h,J} \subset X_h\\\dim X_{h,J}\leq J}}
	\varepsilon_{h,s,N}
	(X_{h,J}),
\end{equation}
which is the unique minimizer of the \emph{computable or empirical} error measure
\begin{align}\label{eq:discrete_error_measure}
	\varepsilon_{h,s,N}
	\left(
		X_{J}
	\right)
	\coloneqq
	\frac{1}{N}
	\sum_{n=1}^{N}
	\norm{
		u_h
		\left(
			\y^{(n)}
		\right)
		-
		\mathsf{P}_{X_J}
		u_h
		\left(
			\y^{(n)}
		\right)
	}^2_X,
\end{align}
with sample points
$\left\{\y^{(n)}\right\}_{n=1}^N \subset \mathbb{U}^{(s)}\coloneqq [-1,1]^s$, $s\in\mathbb{N}$. 
Similarly to $\mathbb{U}$, we equip $\mathbb{U}^{(s)}$ with the structure of a probability space
and with the tensor product unit measure
\begin{align}
	\mu^s(\dd\y)
	=
	\bigotimes_{j=1}^{s} \frac{\dd y_j}{2}.
\end{align}
We assume that these sample points are taken to be
quasi-Monte Carlo points such as the Halton point sequences \cite{Hal1960}
or higher order quasi-Monte Carlo based on IPL sequences, see
e.g. \cite{DGGS16,DKL14,DLS2016}.

We further assume to have a basis $\{\varphi_{1},\dots,\varphi_{N_h}\}$ of $X_h$ at our disposal and denote by boldface letters $\mathbf{w}_h\in\mathbb{C}^{N_h}$ the coefficient vector of a function $w_h\in X_h$ in the aforementioned basis.
Using the mass matrix
${\bf M}_h \in \mathbb{R}^{N_h\times N_h}$ defined as
\begin{align}
	\left(
		{\bf M}_h
	\right)_{i,j}
	=
	\dotp{\varphi_{i}}{\varphi_{j}}_X,
	\quad
	i,j
	\in \{1,\dots,N_h\},
\end{align}
this one-to-one correspondence yields finite dimensional representations
of norm and inner product in $X_h$, which read
\begin{equation}
	\dotp{v_h}{w_h}_X
	=
	{\bf v}_h^{\star}{\bf M}_h{\bf w}_h
	\quad
	\text{and}
	\quad
	\norm{v_h}_X
	=
	\sqrt{
		{\bf v}_h^{\star}{\bf M}_h{\bf v}_h
	}
    \defis
    \norm{{\bf v}_h}_{{\bf M}_h},
\end{equation}
for $v_h,w_h \in X_h$. We recall that ${\bf M}_h$ is symmetric and positive definite.

Let $X_{h,J}$ be a subspace of $X_h$ of dimension $J$ which is spanned
by the orthonormal basis $\{v_h^{(1)},\dots,v_h^{(J)}\}$.
Set ${\bf\Phi}=\left({\bf v}_h^{(1)},\dots,{\bf v}_h^{(J)}\right)$
where each ${\bf v}_h^{(i)} \in \mathbb{C}^{N_h}$ collects the 
coefficients of the representation of $v^{(i)}_h$ in the basis
$\{\varphi_{1},\dots,\varphi_{N_h}\}$ of $X_h$.
Using these facts, \cref{eq:discrete_error_measure}
becomes %
\begin{equation}
\begin{aligned}
	\varepsilon_{h,s,N}
	\left(
		X_{h,J} %
	\right)
	&
	=
	\frac{1}{N}
	\sum_{n=1}^{N}
	\norm{
	{\bf u}_h
	\big(
		\y^{(n)}
	\big)
	-
	\sum_{j=1}^J
	\Big(\big({\bf v}^{(j)}_h\big)^\star{\bf M}_h{\bf u}_h\big(\y^{(n)}\big)\Big)
	{\bf v}^{(j)}_h
	}^2_{{\bf M}_h}
	\\
	&
	=
	\frac{1}{N}
	\sum_{n=1}^{N}
	\norm{
	{\bf u}_h
	\big(
		\y^{(n)}
	\big)
	-
	{\bf \Phi}
	{\bf \Phi}^\star
	{\bf M}_h
	{\bf u}_h
	\big(
		\y^{(n)}
	\big)
	}^2_{{\bf M}_h}.
\end{aligned}
\end{equation}

To compute the minimum of this error measure, we define the \emph{snapshot matrix} $\widetilde{\mathbf{S}}$
as
\begin{align}\label{eq:snapshot_matrix}
  \widetilde{\mathbf{S}}
	\coloneqq
	\left(
		{\bf u}_h
		\big(
			\y^{(1)}
		\big)
		,
		\dots
		,
		{\bf u}_h
		\big(
			\y^{(N)}
		\big)
	\right)
	\in
	\IC^{N_h \times N},
\end{align}
where, as previously explained, each ${\bf u}_h\left(\y^{(i)}\right)$
corresponds to the representation in the basis of $X_{h,J}$ 
of $u_h\left(\y^{(i)}\right)$.
Considering the SVD $\mathbf{S} = \mathbf{U}\boldsymbol{\Sigma}\mathbf{V}^{\dagger}$ of $\mathbf{S} = N^{-1/2}{\bf M}_h^{1/2}\widetilde{\mathbf{S}}$, where
\begin{equation}
	{\bf U}
	=
	\left(
		{\boldsymbol{\zeta}}_1,
		\ldots,
		{\boldsymbol{\zeta}}_{N_h}
	\right)\in \mathbb{R}^{N_h \times N_h},
	\quad 
	{\bf V}
	=
	\left(
		{\boldsymbol{\psi}}_1,
		\ldots,
		{\boldsymbol{\psi}}_{N_h}
	\right) 
	\in 
	\mathbb{R}^{N \times N},
\end{equation}
are orthogonal matrices and 
$\boldsymbol{\Sigma}=\operatorname{diag}
\left(\sigma_{h,s,N,1}, \ldots, \sigma_{h,s,N,\check{r}}\right) 
\in \mathbb{R}^{N_h \times N}$ with $\sigma_{h,s,N,1} \geq \cdots \geq \sigma_{h,s,N,\check{r}}>0$, 
being $\check{r} \in \mathbb{N}$ the rank of 
$\widetilde{\mathbf{S}}$, we obtain through POD the following basis of reduced dimension $J$
\begin{equation}\label{eq:reduced_space}
	{\bf \Phi}^{\text{(rb)}}_J
	=
	\left(
		{\boldsymbol\zeta}^{\text{(rb)}}_1,
		\dots,
		{\boldsymbol\zeta}^{\text{(rb)}}_J
	\right)
	=
	\left(
	{\bf M}_h^{-1/2}
	{\boldsymbol\zeta}_1,
	\dots,
	{\bf M}_h^{-1/2}
	{\boldsymbol\zeta}_J
	\right)
\end{equation}
for $J\leq \check{r}$. 
This basis is such that its span 
understood as elements of $X_h$, which in 
the following we refer to as $X_{N,s,h,J}^{(\text{rb})}$, satisfies
\begin{equation}\label{eq:PODsingularvalues}
	\varepsilon_{h,s,N}
	\Big(X_{h,s,N,J}^{(\text{rb})}\Big)
	=
	\min_{
		\substack{X_{h,J} \subset X_h\\\dim X_{h,J}\leq J}}
	\varepsilon_{h,s,N}
	\Big(X_{h,J}\Big)
	=
	\sum_{i=J+1}^{\check{r}}
	\sigma_{h,s,N,i}^2,
\end{equation}
see, e.g.,~\cite[Proposition 6.2]{QMNMOR2016}.

\begin{remark}\label{rem:PODinnerproduct}
  Rather than applying ${\bf M}_h^{\pm1/2}$,
  in actual computations one would evaluate
  \begin{equation}\label{eq:snapshotcorrelation}
    \mathbf{C}=\frac{1}{N}\widetilde{\mathbf{S}}^\star\mathbf{M}_h\widetilde{\mathbf{S}},
  \end{equation}
  exploit that
$
    \mathbf{C}\boldsymbol{\psi}_i
    =	\mathbf{S}^\star\mathbf{S}\boldsymbol{\psi}_i
    =
    \sigma_{h,s,N,i}^2\boldsymbol{\psi}_i,
$
compute the eigenpairs corresponding to the $J$ largest eigenvalues of $\mathbf{C}$, and set $\boldsymbol{\zeta}_i=\sigma_i^{-1}\mathbf{S}\boldsymbol{\psi}_i$, $i=1,\ldots,J$, see also \cite{QMNMOR2016}.
\end{remark}

\section{Fully Discrete Analysis of the Galerkin-POD RB Method}
\label{sec:analysis_galerkin_POD_RBM}
In this section, we provide a complete error analysis of the Galerkin-POD RB method.
\subsection{Galerkin POD Error Estimate}
The goal of this section is to bound the error between the
solution $u$ to \cref{eq:continuous_var_prb_a} and $\mathsf{P}_{X_{h,s,N,J}^{(\text{rb})}}u_h$
with ${X}_{h,s,N,J}^{(\text{rb})}$ as in \cref{eq:PODsingularvalues}
and $u_h$ as in \cref{eq:discrete_var_prb_a}
in terms of the following error sources and corresponding
discretization variables: (i) Galerkin discretization ($h>0$), (ii)
dimension truncation of the parametric input ($s \in \mathbb{N}$),
(iii) reduced basis approximation ($J \in \mathbb{N}$), and
(iv) number of snapshots used in the empirical computation of the reduced basis ($N \in \mathbb{N}$). 
To this end, we note that the error itself can be split into the following contributions:
\begin{equation}\label{eq:errordecomposition}
\begin{aligned}
\norm{u- \mathsf{P}_{X_{h,s,N,J}^{(\text{rb})}}
    u_h
}_{L^2(\mathbb{U};X)}
\lesssim
\underbrace{
\norm{
    u
    -
    u_h
}_{L^2(\mathbb{U};X)}
}_{\text{Galerkin error}}
+
\underbrace{
\norm{
    u_h
    -
    u^{(s)}_h 
}_{L^2(\mathbb{U};X)}
}
_{\substack{\text{Truncation}\\\text{Error}}}&\\
+
\underbrace{
\norm{
    u^{(s)}_h
    -
    \mathsf{P}_{X_{h,s,NJ}^{(\text{rb})}}
    u^{(s)}_h 
}_{L^2(\mathbb{U};X)}
}_{\substack{\text{POD Error}}}&
,
\end{aligned}
\end{equation}
where for $\y = (y_1,\dots,y_s,\dots) \in \mathbb{U}$  we set 
$u^{(s)}_h(\y) = u_h(y_1,y_2,\dots,y_s,0,0\dots)$.
The implicit constant in \cref{eq:errordecomposition}
is independent of $h$, $s$, $N$, and $J$.

To estimate the Galerkin error, we note that standard
inf-sup theory yields the following estimate, which is valid pointwise 
for each $\y \in \mathbb{U}$
\begin{equation}
\|u(\y)-u_h(\y)\|_X
\leq
\bigg(1+\frac{\overline{\alpha}}{\underline{\alpha}}\bigg)
\inf_{v_h\in X_h}\|u(\y)-v_h\|_X
\end{equation}
and by exploiting that $\mathbb{U}$ has unit measure, we obtain
\begin{equation}\label{eq:Galerkinerror}
\begin{aligned}
\norm{
    u
    -
    u_h
}_{L^2(\mathbb{U};X)}
&
\leq
\norm{
    u
    -
    u_h
}_{L^\infty(\mathbb{U};X)}
\\
&
\leq
\bigg(1+\frac{\overline{\alpha}}{\underline{\alpha}}\bigg)
\sup_{\y\in\mathbb{U}}
\inf_{v_h\in X_h}\|u(\y)-v_h\|_X.
\end{aligned}
\end{equation}
To estimate the truncation error, we exploit again that $\mathbb{U}$ has unit measure and that the $(\boldsymbol{b},p,\varepsilon)$-holomorphy of $u_h$ yields%
\begin{equation}\label{eq:truncationerror}
    \norm{
        u_h
        -
	   u^{(s)}_h
    }_{L^\infty(\mathbb{U};X)}
    \lesssim
    s^{-\left(\frac{1}{p}-1\right)},
\end{equation}
with the constant depending only on $p\in (0,1)$ and $\boldsymbol{b} \in \ell^p(\mathbb{N})$.
We proceed to prove this claim. Observe that
\begin{equation}\label{eq:triangle}
    \norm{
        u_h
        -
        u^{(s)}_h
    }_{L^\infty(\mathbb{U};X)}
    \leq
    \sum_{k=s+1}^\infty
    \norm{
        u^{(k+1)}_h
        -
        u^{(k)}_h
    }_{L^\infty(\mathbb{U};X)}
\end{equation}
where we have used that
$u_h = \lim_{k\rightarrow \infty} u^{(k)}_h $.
Next, observe that
\begin{equation}\label{eq:bound_der}
    \norm{
        u^{(k+1)}_h
        -
        u^{(k)}_h
    }_{L^\infty(\mathbb{U};X)}
    \leq
    2
    \sup_{\y  \in \mathbb{U}}
    \norm{
        \left(
            \partial_{k+1}
            u_h
        \right)(\y)
    }_X,
\end{equation}
where $\partial_{k+1}$ denotes partial differentiation
with respect to the $k+1$ component of the parametric 
input $\y \in \mathbb{U}$. In follows from 
\cite[Theorem 3.1]{DLS2016} that there exists a finite
$K \in \mathbb{N}$ such that for any $k\in \mathbb{N}$
one has
\begin{equation}\label{eq:bound_der_2}
    \sup_{\y  \in \mathbb{U}}
    \norm{
        \left(
            \partial_{k+1}
            u_h
        \right)(\y)
    }_X
    \lesssim
    \beta_{k} \coloneqq 
    \left\{
    \begin{array}{cc}
        1 & k<K, \\
        b_k &  k>K.
    \end{array}
    \right.
\end{equation}
Clearly, one has that $\boldsymbol{\beta} \coloneqq \{\beta_j\}_{j\geq 1} \in \ell^p(\mathbb{N})$, with the same $p \in (0,1)$, therefore
it follows from \cref{eq:triangle,eq:bound_der,eq:bound_der_2} and \cite[Theorem 2.1]{DLS2016} that \cref{eq:truncationerror}
holds true.
It thus remains to bound the POD error.

\subsection{POD Sampling Error}
To estimate the POD error, we note that 
\begin{equation}\label{eq:continuous_risk}
\norm{
    u_h^{(s)}
    -
    \mathsf{P}_{X_{h,s,N,J}^{(\text{rb})}}u_h^{(s)}
}^2_{L^2(\mathbb{U};X)}
=
\int_{\mathbb{U}^{(s)}}
\norm{
    u_h^{(s)}(\y)
    -
    \mathsf{P}_{X_{h,s,N,J}^{(\text{rb})}}u_h^{(s)}(\y)
    }^2_{X}
    \mu^s(\dd\y),
\end{equation}
and that \cref{eq:discrete_error_measure} is obtained by 
applying an equal weights, $N$-points quadrature rule with 
points $\left\{\y^{(n)}\right\}_{n=1}^N \subset \mathbb{U}^{(s)}$, $s\in\mathbb{N}$ to \cref{eq:continuous_risk}.
As we show in the following, 
quasi-Monte Carlo and higher-order quasi-Monte Carlo estimates, such as
the Halton sequence or the IPL sequences, see \cref{sec:QMC} for details,
are now immediately applicable.

\begin{lemma}\label{lem:QMCerror}
It holds 
\begin{align}\label{eq:PODerror}
\snorm{
\norm{
    u_h^{(s)}
    -
    \mathsf{P}_{X_{h,s,N,J}^{(\normalfont\text{rb})}}
    u_h^{(s)}
    }^2_{L^2(\mathbb{U}^{(s)};X)}
	-
	\varepsilon_{h,s,N}
	\left(
		X_{h,s,N,J}^{(\normalfont\text{rb})}
	\right)
}
	\lesssim
	N^{-\alpha}.
\end{align}
Here, we obtain $\alpha=1-\delta$ for any $\delta\in(0,1)$ for the Halton sequence 
under the assumption $p \in (0,\frac{1}{3})$ 
and $\alpha=\frac{1}{p}$ for the IPL sequences. In the former case, the implicit constant in \cref{eq:PODerror} depends on $\delta$, 
and tends to infinity as $\delta\rightarrow 0^+$.
\end{lemma}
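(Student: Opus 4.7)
The strategy is to recognise the left-hand side of \cref{eq:PODerror} as a quasi-Monte Carlo integration error for a scalar integrand that inherits the parametric regularity of $u_h^{(s)}$, and then invoke the standard convergence rates for the Halton and IPL sequences. The technical obstacle is that the reduced space $X_{h,s,N,J}^{(\text{rb})}$ is itself sample-dependent, so the QMC bound must be produced uniformly over all $J$-dimensional subspaces of $X_h$ before being specialised to $X_J=X_{h,s,N,J}^{(\text{rb})}$.

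\emph{Reformulation.} For any $J$-dimensional subspace $X_J\subset X_h$, define the scalar integrand
$$F_{X_J}(\y)\coloneqq \norm{u_h^{(s)}(\y)-\mathsf{P}_{X_J}u_h^{(s)}(\y)}_X^2,\qquad \y\in\mathbb{U}^{(s)}.$$
By \cref{eq:continuous_risk} and \cref{eq:discrete_error_measure}, the quantity whose absolute value is to be bounded in \cref{eq:PODerror} coincides with the QMC integration error
$$\int_{\mathbb{U}^{(s)}}F_{X_J}(\y)\,\mu^s(\dd\y)-\frac{1}{N}\sum_{n=1}^N F_{X_J}(\y^{(n)})$$
evaluated at $X_J=X_{h,s,N,J}^{(\text{rb})}$.

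\emph{Uniform regularity of $F_{X_J}$.} I would first show that the QMC bound holds uniformly over all $J$-dimensional $X_J\subset X_h$. Since $(\Id-\mathsf{P}_{X_J})\colon X\to X$ is linear with operator norm at most one, composition with $u_h^{(s)}$ (which inherits $(\boldsymbol{b},p,\varepsilon)$-holomorphy from \cref{eq:bpeblflf}) yields that the map $\y\mapsto(\Id-\mathsf{P}_{X_J})u_h^{(s)}(\y)\in X$ is itself $(\boldsymbol{b},p,\varepsilon)$-holomorphic with admissible polyellipses and bounding constant $C_\varepsilon$ independent of $X_J$. The squared norm on the complex Hilbert space $X$ is not holomorphic in the real variable $\y$, but it is real-analytic, and I would pass to $F_{X_J}$ via the Leibniz product rule applied to $(g_{X_J}(\y),g_{X_J}(\y))_X$ with $g_{X_J}\coloneqq(\Id-\mathsf{P}_{X_J})u_h^{(s)}$. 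Combining this with the Cauchy estimates implied by the $(\boldsymbol{b},p,\varepsilon)$-holomorphy of $g_{X_J}$ gives mixed partial derivative bounds of the form
$$\sup_{\y\in\mathbb{U}^{(s)}}\snorm{\partial^{\boldsymbol{\nu}}F_{X_J}(\y)}\lesssim \boldsymbol{\nu}!\,\boldsymbol{\beta}^{\boldsymbol{\nu}}$$
for every finite multi-index $\boldsymbol{\nu}$, with $\boldsymbol{\beta}\in\ell^p(\mathbb{N})$ derived from $\boldsymbol{b}$ and all constants independent of $X_J$. These are precisely the regularity hypotheses used to drive dimension-robust QMC rates.

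\emph{QMC invocation and conclusion.} Once this uniform regularity is established, I would invoke the QMC integration error estimates referenced in \cref{sec:QMC}: a Koksma--Hlawka type argument for the Halton sequence under $p\in(0,1/3)$ delivers the rate $N^{-(1-\delta)}$ for any $\delta\in(0,1)$, with implicit constant blowing up as $\delta\to 0^+$, while the interlaced polynomial lattice framework of \cite{DGGS16,DKL14,DLS2016} delivers the dimension-robust higher-order rate $N^{-1/p}$. Both bounds are uniform in $X_J$ and therefore apply at $X_J=X_{h,s,N,J}^{(\text{rb})}$, yielding \cref{eq:PODerror}. The main obstacle is the uniform regularity step: one must simultaneously circumvent the loss of holomorphy caused by taking the squared norm and control the combinatorial factors produced by the Leibniz rule so that the $\ell^p$-summability underlying the dimension-robust QMC rates is preserved and the resulting constants do not depend on the sample-dependent subspace $X_J$.
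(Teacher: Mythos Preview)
Your proposal is correct and follows the same overall strategy as the paper---recognise the left-hand side as a QMC quadrature error for a scalar integrand and invoke the estimates recalled in \cref{sec:QMC} once that integrand is shown to inherit the required parametric regularity. The difference lies in how the squared norm is handled. You assert that $\y\mapsto\|g_{X_J}(\y)\|_X^2$ fails to be holomorphic and therefore propose to bound its mixed partial derivatives via the Leibniz rule combined with Cauchy estimates for $g_{X_J}$; this works but, as you yourself flag, requires controlling the combinatorial factors so as not to spoil the $\ell^p$-summability. The paper sidesteps this entirely by observing (in \cref{lmm:bpe_holomorphy_pod}) that the squared norm \emph{does} admit a holomorphic extension, namely $\z\mapsto\dotp{f(\z)}{f(\overline{\z})}_X$, so that $(\boldsymbol{b},p,\varepsilon)$-holomorphy of $F_{X_J}$ follows directly and the derivative bounds needed for \cref{lemma:Halton,prop:QMC_error} come for free. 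On the other hand, your treatment is more explicit than the paper's about the sample-dependence of $X_{h,s,N,J}^{(\text{rb})}$: you insist on constants uniform over all $J$-dimensional subspaces $X_J\subset X_h$ before specialising, whereas the paper leaves this implicit (it follows since $\|\Id-\mathsf{P}_{X_J}\|\leq 1$ makes the bounding constant $C_\varepsilon$ in \cref{def:bpe_holomorphy} independent of $X_J$).
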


\begin{proof}
Under the assumptions established in \ref{sec_rom}, the map 
$\mathbb{U} \ni \y \mapsto u_h(\y) \in X$ is  $(\boldsymbol{b},p,\varepsilon)$-holomorphic
and continuous. Next, it follows from \Cref{lmm:bpe_holomorphy_pod} that
the map
\begin{equation}\label{eq:integrand_hol}
    \mathbb{U}
    \ni
    \y \mapsto 
    \norm{
        u_h(\y)
        -
        \mathsf{P}_{X_{h,s,N,J}^{(\text{rb})}}u_h(\y)
    }^2_{X}
    \in
    \mathbb{R}.
\end{equation}
is so as well. 
Using this, the result of this lemma is a direct consequence
of \Cref{lemma:Halton,prop:QMC_error} in \Cref{sec:QMC} for the Halton and HoQMC
quadrature rules, respectively.
\end{proof}

Using \cref{eq:Galerkinerror,eq:truncationerror,eq:PODerror} to bound the errors in \cref{eq:errordecomposition},
we obtain the following error bound.
\begin{corollary}\label{cor:contest}
It holds
\begin{equation}\label{eq:error_bound_eigenvalues}
\begin{aligned}
    \norm{u- \mathsf{P}_{X_{h,s,N,J}^{(\normalfont\text{rb})}}
        u_h
    }^2_{L^2(\mathbb{U};X)}
    \lesssim
    {}&
	\sup_{\y \in \mathbb{U}}
	\inf_{v_h \in X_h}
	\norm{u(\y)-v_h}_X^2
    \\
    &
    +
    s^{-2\left(\frac{1}{p}-1 \right)}
    +
    N^{-\alpha}
    +
    \varepsilon_{h,s,N}
\left(		X_{h,s,N,J}^{(\normalfont\text{rb})}
	\right),
\end{aligned}
\end{equation}
with $\alpha=1-\delta$ in the case of the Halton sequence under the assumption $p \in (0,\frac{1}{3})$
and $\alpha=\frac{1}{p}$ in the case of the IPL sequences for $p \in (0,1)$. 
In the former case, the hidden constant in
\cref{eq:error_bound_eigenvalues} tends to infinity 
as $\delta \rightarrow 0^{+}$.
\end{corollary}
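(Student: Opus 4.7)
The plan is to proceed by directly combining the ingredients assembled earlier in the section: the triangle-type decomposition \cref{eq:errordecomposition}, the Galerkin bound \cref{eq:Galerkinerror}, the truncation bound \cref{eq:truncationerror}, and the sampling estimate \cref{eq:PODerror} from \Cref{lem:QMCerror}. The corollary is essentially a bookkeeping step; no new analytical idea is needed.

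First I would start from \cref{eq:errordecomposition} and square both sides, using the elementary inequality $(a+b+c)^2\leq 3(a^2+b^2+c^2)$ so that only an absolute constant is absorbed into the $\lesssim$. This reduces the task to bounding each of the three squared summands individually. The Galerkin term is immediately controlled by squaring \cref{eq:Galerkinerror}, which yields
\begin{equation*}
\norm{u-u_h}_{L^2(\mathbb{U};X)}^2
\lesssim
\sup_{\y\in\mathbb{U}}\inf_{v_h\in X_h}\|u(\y)-v_h\|_X^2,
\end{equation*}
the constant depending only on the continuity and inf-sup ratio $\overline{\alpha}/\underline{\alpha}$. The truncation term is controlled by squaring \cref{eq:truncationerror}, producing the contribution $s^{-2(1/p-1)}$ with an implicit constant depending only on $p$ and $\boldsymbol{b}$.

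Next I would handle the POD term, which is the only place where $N$ appears. By construction the POD error is exactly the integral in \cref{eq:continuous_risk}, whereas $\varepsilon_{h,s,N}(X_{h,s,N,J}^{(\text{rb})})$ is its $N$-point equal-weight quadrature approximation. Adding and subtracting $\varepsilon_{h,s,N}(X_{h,s,N,J}^{(\text{rb})})$ and applying \Cref{lem:QMCerror} gives
\begin{equation*}
\norm{u_h^{(s)}-\mathsf{P}_{X_{h,s,N,J}^{(\text{rb})}}u_h^{(s)}}^2_{L^2(\mathbb{U};X)}
\leq
\varepsilon_{h,s,N}\!\left(X_{h,s,N,J}^{(\text{rb})}\right)
+
C\,N^{-\alpha},
\end{equation*}
with $\alpha=1-\delta$ (Halton, $p\in(0,1/3)$) or $\alpha=1/p$ (IPL); the $\delta$-dependence of the constant in the Halton case is inherited directly from \Cref{lem:QMCerror}.

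Collecting the three squared bounds produces exactly the right-hand side of \cref{eq:error_bound_eigenvalues}. The only mildly delicate step is the legitimacy of applying \Cref{lem:QMCerror} to the integrand in \cref{eq:continuous_risk}, which in turn relies on the $(\boldsymbol{b},p,\varepsilon)$-holomorphy of $\y\mapsto\|u_h(\y)-\mathsf{P}_{X_{h,s,N,J}^{(\text{rb})}}u_h(\y)\|_X^2$; this is already supplied by the lemma itself via the auxiliary holomorphy statement cited in its proof, so no additional obstacle arises. I therefore expect the write-up to be short and essentially a concatenation of the already-established estimates.
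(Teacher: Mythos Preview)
Your proposal is correct and follows essentially the same route as the paper: the paper's proof is a one-line invocation of \cref{eq:errordecomposition} together with the already-established bounds \cref{eq:Galerkinerror}, \cref{eq:truncationerror}, and \cref{eq:PODerror}, and your write-up merely spells out the squaring step and the add-and-subtract argument behind the POD term in slightly more detail.
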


We note that $\varepsilon_{h,s,N}\big(X_{h,s,N,J}^{(\normalfont\text{rb})}\big)$
can be fully controlled a-posteriori by selecting an appropriate dimension $J$ for the reduced space in \cref{eq:PODsingularvalues}.
In the following, we give an a-priori analysis of this term.

\subsection{A-priori analysis of the POD error}
Considering
\begin{equation}\label{eq:errmeasapriori}
	\varepsilon_{h,s,N}
	\Big(X_{h,s,N,J}^{(\text{rb})}\Big)
	=
	\sum_{i=J+1}^r
	\sigma^2_{h,s,N,i}
\end{equation}
as given in \cref{eq:PODsingularvalues} we observe that it is fully determined by the eigenvalues $\sigma_{h,s,N,i}^2$ of the matrix
$
{\bf C}
=
\frac{1}{N}
\widetilde{\bf S}^\star{\bf M}_h\widetilde{\bf S},
$
see also \cref{rem:PODinnerproduct}.
In the following, we bound $\varepsilon_{h,s,N}\big(X_{h,s,N,J}^{(\text{rb})}\big)$
in terms of $N$ and $J$.

\begin{lemma}\label{lem:discerrdecay}
It holds
\begin{equation}\label{eq:error_bound_quad}
    \varepsilon_{h,s,N}
    \Big(X_{h,s,N,J}^{(\normalfont\text{rb})}\Big)
    \lesssim
    N^{-\alpha}
    +
    J^{-2\left(\frac{1}{p}-1\right)}
\end{equation}
with the same considerations as stated in \Cref{cor:contest} for $\alpha$ and the hidden constant in \cref{eq:error_bound_quad}.
\end{lemma}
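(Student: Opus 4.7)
The plan is to combine three ingredients already available in the paper: the optimality of the empirical POD subspace \cref{eq:PODsingularvalues}, the QMC quadrature estimate of \Cref{lem:QMCerror}, and the a-priori reduced basis decay rate \cref{eq:rbbestNterm}. The strategy mirrors the one used in \Cref{cor:contest}, but is applied inside the truncated parameter domain $\mathbb{U}^{(s)}$ and with the specific empirical reduced space at hand.

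First, I would introduce the \emph{exact}, noncomputable POD subspace $X_{h,J}^{\star,(s)}\subset X_h$ of dimension $J$ associated to the truncated map $u_h^{(s)}\colon\mathbb{U}^{(s)}\to X_h$ and the tensor-product uniform measure $\mu^s$, together with its $L^2$-risk
$$\varepsilon_h^{(s)}(V)\isdef\int_{\mathbb{U}^{(s)}}\bigl\|u_h^{(s)}(\y)-\mathsf{P}_V u_h^{(s)}(\y)\bigr\|_X^2\,\mu^s(\dd\y).$$
Observe that $\varepsilon_{h,s,N}(V)$ is precisely the equal-weights QMC quadrature approximation to $\varepsilon_h^{(s)}(V)$. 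Since $u_h^{(s)}$ inherits $(\boldsymbol{b},p,\varepsilon)$-holomorphy from $u_h$ with constants uniform in $s$ (by setting the tail coordinates to zero inside the poly-ellipse), the polynomial-expansion argument that yields \cref{eq:rbbestNterm}, applied to the truncated map, gives
$$\varepsilon_h^{(s)}\bigl(X_{h,J}^{\star,(s)}\bigr)\lesssim J^{-2(1/p-1)},$$
with an implicit constant depending only on $p$ and $\boldsymbol{b}$.

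Next, by the minimality property \cref{eq:PODsingularvalues} of $X_{h,s,N,J}^{(\text{rb})}$ among all $J$-dimensional subspaces of $X_h$,
$$\varepsilon_{h,s,N}\bigl(X_{h,s,N,J}^{(\text{rb})}\bigr)\leq\varepsilon_{h,s,N}\bigl(X_{h,J}^{\star,(s)}\bigr).$$
Because $X_{h,J}^{\star,(s)}$ is now a \emph{fixed} (sample-independent) subspace, the integrand
$$\y\mapsto\bigl\|u_h^{(s)}(\y)-\mathsf{P}_{X_{h,J}^{\star,(s)}}u_h^{(s)}(\y)\bigr\|_X^2$$
is $(\boldsymbol{b},p,\varepsilon)$-holomorphic with constants uniform in the choice of orthogonal projector; this is exactly the holomorphy statement invoked through \Cref{lmm:bpe_holomorphy_pod} in the proof of \Cref{lem:QMCerror}. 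The QMC quadrature estimate therefore gives
$$\bigl|\varepsilon_{h,s,N}\bigl(X_{h,J}^{\star,(s)}\bigr)-\varepsilon_h^{(s)}\bigl(X_{h,J}^{\star,(s)}\bigr)\bigr|\lesssim N^{-\alpha},$$
with $\alpha$ as in \Cref{lem:QMCerror}. Chaining the last three displays yields \cref{eq:error_bound_quad}.

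The main obstacle is verifying that neither the constant in the $J^{-2(1/p-1)}$ rate nor the QMC constant depends on $s$ or on the choice of projection subspace. The former follows because $(\boldsymbol{b},p,\varepsilon)$-holomorphy of $u_h$ transfers to all truncated maps $u_h^{(s)}$ with uniform constants. The latter holds because the QMC constant is controlled by the sup-norm of the holomorphic extension of the integrand on a fixed Bernstein poly-ellipse, and since $\|\Id-\mathsf{P}_{X_{h,J}^{\star,(s)}}\|_{X\to X}\leq 1$, this extension is bounded in modulus by (twice) the uniformly bounded holomorphic extension of $u_h^{(s)}$ itself, independently of $s$ and of the subspace.
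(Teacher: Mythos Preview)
Your argument is correct, and it takes a genuinely different (and more direct) route than the paper's proof. The paper exploits the eigenvalue characterization of $\varepsilon_{h,s,N}\bigl(X_{h,s,N,J}^{(\text{rb})}\bigr)$ as a tail trace, writes this trace via a variational principle over subspaces $V\subset X_h$, and then splits $\trace\bigl(\mathsf{P}_V\mathsf{K}_{h,s,N}\mathsf{P}_V\bigr)$ into the difference $\trace\bigl(\mathsf{P}_V(\mathsf{K}_{h,s,N}-\mathsf{K}_h)\mathsf{P}_V\bigr)$ (bounded via QMC applied to $\y\mapsto\|\mathsf{P}_Vu_h(\y)\|_X^2$) and $\trace\bigl(\mathsf{P}_V\mathsf{K}_h\mathsf{P}_V\bigr)$ (whose minimum over $V$ is the continuous POD error $\varepsilon_h(X_{h,J}^{(\text{rb})})$, then bounded by \cref{eq:rbbestNterm}). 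Your approach bypasses the trace machinery entirely: you compare directly to the exact POD subspace $X_{h,J}^{\star,(s)}$ via the empirical optimality \cref{eq:PODsingularvalues}, then invoke QMC on the \emph{fixed} integrand $\y\mapsto\|u_h^{(s)}(\y)-\mathsf{P}_{X_{h,J}^{\star,(s)}}u_h^{(s)}(\y)\|_X^2$. The two arguments use the same ingredients (QMC error, best-$J$-term rate, $(\boldsymbol{b},p,\varepsilon)$-holomorphy uniform under orthogonal projections) but assemble them differently; your version is shorter and avoids the somewhat delicate trace/Ky--Fan bookkeeping, while the paper's version makes the connection between the discrete and continuous singular values more explicit. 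Your closing paragraph correctly identifies and resolves the only nontrivial point, namely that the QMC and best-rate constants are uniform in $s$ and in the projector.
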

\begin{proof}
We observe that $\mathbf{C}_h=\frac{1}{N}\widetilde{\mathbf{S}}^\star\mathbf{M}\widetilde{\mathbf{S}}$ in \cref{eq:snapshotcorrelation}, $\frac{1}{N}({\bf M}_h^\star)^{1/2}\widetilde{\bf S}^\star\widetilde{\bf S}{\bf M}_h^{1/2}$, and ${\bf K}=\frac{1}{N}\widetilde{\bf S}^\star\widetilde{\bf S}{\bf M}_h$ have the very same eigenvalues $\sigma_{h,s,N,i}^2$ and that the latter, ${\bf K}$, is the matrix representation of
\begin{equation}
	\mathsf{K}_{h,s,N}w_h
	=
	\frac{1}{N}
	\sum_{n=1}^Nu_h(\y^{(n)})\Big(u_h(\y^{(n)}),w_h\Big)_X,
\end{equation}
which also has eigenvalues $\sigma_{h,s,N,i}^2$. Using this notation, and recalling \cref{eq:continuousio}, we estimate
\begin{equation}
\begin{aligned}
    \varepsilon_{h,s,N}
        \Big(X_{h,s,N,J}^{(\text{rb})}\Big)
        &=
	\sum_{i=J+1}^{\check{r}}\sigma_{h,s,N,i}^2\\
        &=
	\min_{\substack{\mathbf{v}\in\mathbb{C}^{N_h\times r-J}\\
	\mathbf{v}^\star\mathbf{v}=\mathbf{I}}}\trace\big(\mathbf{v}^\star\mathbf{C}\mathbf{v}\big)\\
      	&=
      	\min_{\substack{V\subset X_h\\\dim V\leq r-J}}\trace\big(\mathsf{P}_V\mathsf{K}_{h,s,N}\mathsf{P}_V\big)\\
      	&=
      	\min_{\substack{V\subset X_h\\\dim V\leq r-J}}
	\left(
		\trace\big(\mathsf{P}_V\mathsf{K}_{h,s,N}\mathsf{P}_V-\mathsf{P}_V\mathsf{K}_h\mathsf{P}_V\big)
    \right.
	\\
    &
    \qquad
    \qquad
    +
    \left.
		\trace\big(\mathsf{P}_V\mathsf{K}_h\mathsf{P}_V\big)
	\right)
\end{aligned}
\end{equation}
  where, for an arbitrary orthonormal basis $\{\chi_i\}_{i=1}^{N_h}$ of $X_h$, it holds
  \begin{equation}\label{eq:comp_trace}
    \begin{aligned}
      \trace\big(&\mathsf{P}_V\mathsf{K}_{h,s,N}\mathsf{P}_V-\mathsf{P}_V\mathsf{K}_h\mathsf{P}_V\big)\\
      &=
      \sum_{i=1}^{N_h}\Big(\big(\mathsf{K}_{h,s,N}-\mathsf{K}_h\big)\mathsf{P}_V\chi_i,\mathsf{P}_V\chi_i\Big)_X\\
      &=
      \sum_{i=1}^{N_h}\Big(\mathsf{K}_{h,s,N}\mathsf{P}_V\chi_i,\mathsf{P}_V\chi_i\Big)_X
      -
      \sum_{i=1}^{N_h}\Big(\mathsf{K}_h\mathsf{P}_V\chi_i,\mathsf{P}_V\chi_i\Big)_X\\
      &=
      \frac{1}{N}\sum_{n=1}^N\sum_{i=1}^{N_h}\big(\mathsf{P}_Vu_h(\y^{(n)}),\chi_i\big)_X^2
      -
      \int_{\mathbb{U}^{(s)}}\sum_{i=1}^{N_h}\big(\mathsf{P}_Vu_h(\y),\chi_i\big)_X^2\dd\mu^{(s)}(\y)\\
      &=
      \frac{1}{N}\sum_{n=1}^N\big\|\mathsf{P}_Vu_h(\y^{(n)})\big\|_X^2
      -
      \int_{\mathbb{U}^{(s)}}\big\|\mathsf{P}_Vu_h(\y)\big\|_X^2\dd\mu^{(s)}(\y)
    \end{aligned}
  \end{equation}
  Observe that the map $\y \mapsto \mathsf{P}_Vu_h(\y)$ is straightforwardly 
  $(\boldsymbol{b},p,\varepsilon)$-holomorphic
  as the application of $\mathsf{P}_V$ is a linear operation, and 
  as a consequence of \Cref{lmm:bpe_holomorphy_pod}
  so is the map 
  \begin{equation}
      \mathbb{U}
      \ni
      \y
      \mapsto
      \big\|\mathsf{P}_Vu_h(\y)\big\|_X^2
      \in
      \mathbb{R}.
  \end{equation}
    It follows from \Cref{lemma:Halton,prop:QMC_error} in \Cref{sec:QMC} for the Halton and HoQMC, respectively, 
    that the last equation in \cref{eq:comp_trace} is bounded 
    in absolute value by
\begin{equation}
    \snorm{
    \frac{1}{N}\sum_{n=1}^N\big\|\mathsf{P}_Vu_h(\y^{(n)})\big\|_X^2
      -
      \int_{\mathbb{U}^{(s)}}\big\|\mathsf{P}_Vu_h(\y)\big\|_X^2\dd\mu^{(s)}(\y)
    }
    \lesssim
    N^{-\alpha},%
\end{equation}
with the considerations for the implicit constant and
$\alpha>0$ indicated in \cref{lemma:Halton,prop:QMC_error}.
  This implies
  \begin{equation}\label{eq:final_bound}
      \varepsilon_{h,s,N}
      \Big(X_{h,s,N,J}^{(\text{rb})}\Big)
      \lesssim 
      N^{-\alpha}+\underbrace{\min_{\substack{V\subset X_h\\\dim V\leq r-J}}\trace\big(\mathsf{K}_h|_V\big)}_{=\varepsilon_h(X_{h,J}^{(\text{rb})})},
  \end{equation}
  where $X_{h,s,N,J}^{(\text{rb})}$ is as in \cref{eq:rb_exact}.
  The last term in \cref{eq:final_bound} can be estimated using \cref{eq:rbbestNterm}, implying the assertion.
\end{proof}

\begin{corollary}\label{cor:full_error_projection}
It holds
\begin{equation}\label{eq:error_bound_quad_2}
\begin{aligned}
    \norm{u- \mathsf{P}_{X_{h,s,N,J}^{(\normalfont\text{rb})}}
        u_h
    }_{L^2(\mathbb{U};X)}
    \lesssim
    {}&
    \sup_{\y \in \mathbb{U}}
    \inf_{v_h \in X_h}
    \norm{u(\y)-v_h}_X
    \\
    &
    +
    s^{-\left(\frac{1}{p}-1\right)}
    +
    N^{-\frac{\alpha}{2}}
    +
    J^{-\left(\frac{1}{p}-1\right)}
\end{aligned}
\end{equation}
with the same considerations stated in \Cref{cor:contest} for $\alpha$ and for the hidden constant in \cref{eq:error_bound_quad_2}.
\end{corollary}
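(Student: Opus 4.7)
The plan is to combine \Cref{cor:contest} with \Cref{lem:discerrdecay} and then take a square root. This is essentially bookkeeping: \Cref{cor:contest} already provides a bound for the squared $L^2(\mathbb{U};X)$ error in terms of the Galerkin error, the truncation error, an $N^{-\alpha}$ QMC term, and the residual empirical POD error $\varepsilon_{h,s,N}(X_{h,s,N,J}^{(\text{rb})})$, while \Cref{lem:discerrdecay} gives the a-priori control
\begin{equation*}
\varepsilon_{h,s,N}\Big(X_{h,s,N,J}^{(\text{rb})}\Big) \lesssim N^{-\alpha} + J^{-2(\frac{1}{p}-1)},
\end{equation*}
with the same convention on $\alpha$ and the same implicit constant behaviour (depending on $\delta$ and blowing up as $\delta \to 0^+$ in the Halton case). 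Substituting this bound for $\varepsilon_{h,s,N}$ into \Cref{cor:contest} yields a squared-norm estimate in which the two $N^{-\alpha}$ contributions can be absorbed into one, producing four summands: the squared best-approximation Galerkin error, $s^{-2(1/p-1)}$, $N^{-\alpha}$, and $J^{-2(1/p-1)}$.

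The second step is to pass from the squared estimate to the unsquared one. For this I would use the elementary inequality $\sqrt{a_1+\cdots+a_k} \leq \sqrt{a_1}+\cdots+\sqrt{a_k}$ valid for nonnegative $a_i$ (or, equivalently, $(a_1+\cdots+a_k)^{1/2} \leq k^{1/2}\max_i a_i^{1/2}$ absorbed into the implicit constant). Applied to the four summands above, this produces exactly the claimed right-hand side
\begin{equation*}
\sup_{\y\in\mathbb{U}}\inf_{v_h\in X_h}\|u(\y)-v_h\|_X + s^{-(\frac{1}{p}-1)} + N^{-\frac{\alpha}{2}} + J^{-(\frac{1}{p}-1)}.
\end{equation*}
The parameter $\alpha$ and the dependence of the implicit constant on $\delta$ are inherited directly from \Cref{cor:contest} and \Cref{lem:discerrdecay}, so no new case analysis is needed.

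There is no real obstacle here; the entire statement is an algebraic corollary of two results already in hand, and the only mild care required is to make sure that when $J$ is so small that the best-rank-$J$ approximation bound \cref{eq:rbbestNterm} used inside \Cref{lem:discerrdecay} is only meaningful for $J \leq r$, one either extends the estimate trivially (for $J > r$ the POD error vanishes) or restricts to the relevant range so that the exponent $-2(1/p-1)$ is truly operative. Both cases are compatible with the $\lesssim$ notation.
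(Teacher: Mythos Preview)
Your proposal is correct and matches the paper's own proof, which is the one-line ``Combine \Cref{cor:contest} and \Cref{lem:discerrdecay}.'' You have simply spelled out the substitution and the square-root step in more detail than the paper does.
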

\begin{proof}
Combine \cref{cor:contest} and \cref{lem:discerrdecay}.
\end{proof}
Balancing  errors yields the following a-priori estimate for the ranks of the POD generated subspace $X_{h,s,N,J}^{(\normalfont\text{rb})}\subset X_h$.
\begin{corollary}\label{cor:PODrankbound}
Choosing $J$ in \cref{eq:errmeasapriori} such that 
\begin{equation}
\varepsilon_{h,s,N}
\Big(X_{h,s,N,J}^{(\normalfont\text{rb})}\Big)
\lesssim
N^{-\alpha}
\end{equation}
yields a reduced space $X_{h,s,N,J}^{(\normalfont\text{rb})}\subset X_h$ with dimension at most $J\sim N^{\frac{\alpha}{2(1/p-1)}}$ and satisfying
\begin{equation}
\begin{aligned}
\norm{u- \mathsf{P}_{X_{h,s,N,J}^{(\normalfont\text{rb})}}
    u_h
}_{L^2(\mathbb{U};X)}
\lesssim
\sup_{\y \in \mathbb{U}}
\inf_{v_h \in X_h}
\norm{u(\y)-v_h}_X
+
s^{-\left(1/p-1\right)}
+
N^{-\alpha/2}.
\end{aligned}
\end{equation}
\end{corollary}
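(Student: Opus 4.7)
The plan is to combine Lemma \ref{lem:discerrdecay} with Corollary \ref{cor:full_error_projection} by balancing the two terms that appear in the a priori bound on $\varepsilon_{h,s,N}\big(X_{h,s,N,J}^{(\text{rb})}\big)$. All ingredients are already in place in the excerpt; the argument amounts to choosing $J$ as a function of $N$ so that the quadrature error and the reduced basis truncation error are of the same order, and then propagating that choice through the overall error bound.

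First, I would invoke Lemma \ref{lem:discerrdecay}, which gives
\begin{equation*}
\varepsilon_{h,s,N}\big(X_{h,s,N,J}^{(\text{rb})}\big)
\lesssim
N^{-\alpha}
+
J^{-2\left(\frac{1}{p}-1\right)}.
\end{equation*}
The requirement that the left-hand side be of order $N^{-\alpha}$ forces the second summand to be absorbed by the first, which, up to a constant, is equivalent to
\begin{equation*}
J^{-2\left(\frac{1}{p}-1\right)}
\lesssim
N^{-\alpha}
\qquad\Longleftrightarrow\qquad
J \gtrsim N^{\frac{\alpha}{2(1/p-1)}}.
\end{equation*}
Taking $J$ of the minimal admissible order $J\sim N^{\alpha/(2(1/p-1))}$ yields the asserted dimensional bound on $X_{h,s,N,J}^{(\text{rb})}$.

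Second, I would substitute this choice of $J$ into the conclusion of Corollary \ref{cor:full_error_projection}, which reads
\begin{equation*}
\norm{u-\mathsf{P}_{X_{h,s,N,J}^{(\text{rb})}}u_h}_{L^2(\mathbb{U};X)}
\lesssim
\sup_{\y\in\mathbb{U}}\inf_{v_h\in X_h}\norm{u(\y)-v_h}_X
+
s^{-(1/p-1)}
+
N^{-\alpha/2}
+
J^{-(1/p-1)}.
\end{equation*}
With $J\sim N^{\alpha/(2(1/p-1))}$, the last summand satisfies $J^{-(1/p-1)}\sim N^{-\alpha/2}$, so it is absorbed into the third term. This delivers the stated estimate.

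Since the corollary is essentially an algebraic bookkeeping step on top of the previously established results, I do not anticipate a genuine obstacle. The only point meriting a brief comment in the write-up is that the choice $J\sim N^{\alpha/(2(1/p-1))}$ is optimal in the sense that any smaller $J$ would leave the reduced basis truncation term $J^{-2(1/p-1)}$ dominating $N^{-\alpha}$, while any larger $J$ would inflate the dimension of $X_{h,s,N,J}^{(\text{rb})}$ without improving the overall convergence rate. I would end the proof with a one-line appeal to Lemma \ref{lem:discerrdecay} and Corollary \ref{cor:full_error_projection} to make the dependencies on $p$, $\boldsymbol{b}$, and $\delta$ explicit in the hidden constants, exactly as indicated in Corollary \ref{cor:contest}.
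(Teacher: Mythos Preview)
Your proposal is correct and matches the paper's approach; the paper itself does not spell out a proof beyond the sentence ``Balancing errors yields the following a-priori estimate,'' and your argument is precisely the intended balancing via \cref{lem:discerrdecay} and \cref{cor:full_error_projection}. One small wording point: the hypothesis $\varepsilon_{h,s,N}\big(X_{h,s,N,J}^{(\text{rb})}\big)\lesssim N^{-\alpha}$ does not by itself force $J^{-2(1/p-1)}\lesssim N^{-\alpha}$ (since \cref{lem:discerrdecay} is only an upper bound), so it is cleaner to phrase the first step as ``taking $J\sim N^{\alpha/(2(1/p-1))}$ is sufficient, by \cref{lem:discerrdecay}, to achieve the prescribed tolerance,'' which is how the ``at most'' in the statement should be read.
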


\section{Fully Discrete Error Analysis of the Galerkin-POD NN}
\label{sec:fully_discrete_GPOD_NN}
In this section, we discuss the approximation properties of the Galerkin-POD NN.
We are interested in a fully discrete error analysis for the approximation of the parameter-to-solution map by means of NNs by taking into account all the previously discussed error sources.

\subsection{Artificial Neural Networks}
\label{sec:deep_neural_networks}
Let $L \in \IN$, $\ell_0,\dots, \ell_L \in \IN$
and let
$\sigma:\IR \rightarrow\IR$ be a
nonlinear function, referred to in the following as
the \emph{activation function}. Set
\begin{equation}
    \Theta
    \coloneqq
    \bigtimes_{k=1}^{L}
    \Big(\mathbb{R}^{\ell_{k}\times \ell_{k-1}}
    \times
    \mathbb{R}^{\ell_k}\Big).
\end{equation}
For $\boldsymbol \theta = (\boldsymbol \theta_1,\dots,\boldsymbol \theta_L) \in \Theta$,
with $ \boldsymbol \theta_k = ({\bf W}_k,{\bf b}_k)$,
${\bf W}_k\in\IR^{\ell_k\times\ell_{k-1}}$, ${\bf b}_k\in\IR^{\ell _k}$,
consider the affine transformation
${\bf A}_k: \IR^{\ell_{k-1}}\rightarrow \IR^{\ell_k}: {\bf x} \mapsto {\bf W}_k {\bf x}+{\bf b}_k$
for $k\in\{1,\ldots,L\}$. We define a \emph{neural network (NN)}
with activation function $\sigma$ as the map
$\Psi_{\boldsymbol \theta}:\IR^{\ell_0}\rightarrow \IR^{\ell_L}$ defined as 
\begin{align}\label{eq:ann_def}
	\Psi_{\boldsymbol \theta}({\bf x})
	\coloneqq
	\begin{cases}
	{\bf A}_1({\bf x}), & L=1, \\
	\left(
		{\bf A}_L
		\circ
		\sigma
		\circ
		{\bf A}_{L-1}
		\circ
		\sigma
		\cdots
		\circ
		\sigma
		\circ
		{\bf A}_1
	\right)({\bf x}),
	& L\geq2,
	\end{cases}
\end{align}
where the activation function $\sigma:\IR\rightarrow \IR$
is applied componentwise. We define the depth and the width
of an NN as
\begin{equation}
    \normalfont\text{width}(\Psi_{\boldsymbol \theta})
    =\max\{\ell_0,\ldots, \ell_L\}
    \quad
    \text{and}
    \quad
    \normalfont\text{depth}(\Psi_{\boldsymbol \theta})
    =
    L+1,
\end{equation}
respectively. 

In the present work, we consider as activation function
the hyperbolic tangent
\begin{align}
	\sigma(x)
	=
	\text{tanh}(x)
	=
	\frac{\exp(x)-\exp(-x)}{\exp(x)+\exp(-x)},
\end{align}	
however other options are possible. 

When this particular function is used, we refer to 
\cref{eq:ann_def} as a tanh NN.
In the following, $\mathcal{N\!N}_{D,W,\ell_0,\ell_D}$
corresponds to the set of all NNs 
with input dimension $\ell_0$, output dimension $\ell_D$, a width of at most $W$, and a depth of at most $D$ layers.

\subsection{Galerkin-POD NN Architecture}
\label{sec:formulation_learning}
Let $\{\zeta^{\text{(rb)}}_1,\dots,\zeta^{\text{(rb)}}_J\}$
correspond to the basis for the finite dimensional space $X_{h,s,N,J}^{(\text{rb})}$ constructed in \cref{eq:reduced_space}
and $u_h(\y)$ the solution to \cref{eq:discrete_var_prb_a}.
Then the map
\begin{align}\label{eq:param_to_rb_coeff_map}
	\boldsymbol{\pi}^{\text{(rb)}}_{h,J}
	:
	\mathbb{U}  \to \mathbb{C}^J:
	\y \mapsto 
	\begin{pmatrix}
		\dotp{u_h(\y)}{\zeta^{\text{(rb)}}_1}_X \\
		\vdots \\
		\dotp{u_h(\y)}{\zeta^{\text{(rb)}}_J}_X
	\end{pmatrix}
\end{align}
gathers the coefficients of the projection of $u_h(\y)$ onto 
the subspace $X_{h,s,N,J}^{(\text{rb})}$,~i.e. of 
$\mathsf{P}_{X_{h,s,N,J}^{(\text{rb})}} u_h(\y)$, for each
$\y \in \mathbb{U}$. Unfortunately, as the setting under
consideration makes use of complex-valued Hilbert spaces,
the map introduced in \cref{eq:param_to_rb_coeff_map} is complex-valued and
we cannot readily use NNs as defined in \cref{sec:deep_neural_networks}.
To alleviate this, and as described in \cite[Section 4.2]{weder2024galerkin}, we consider
instead the mapping
\begin{equation}\label{eq:pi_real_output}
	\boldsymbol{\pi}^{\text{(rb)}}_{h,J,\mathbb{R}}
	:
	\mathbb{U}  \to \mathbb{R}^{2J}:
	\y
	\mapsto
	\begin{pmatrix}
		{\boldsymbol \alpha^\Re ( \boldsymbol{y})} \\
		{\boldsymbol \alpha^\Im( \boldsymbol{y})}
	\end{pmatrix}
	\coloneqq
    \begin{pmatrix}
        \Re \left\{
        		\boldsymbol{\pi}^{\text{(rb)}}_{h,J}(\y)
        \right\}  \\
        \Im \left\{ 
        		\boldsymbol{\pi}^{\text{(rb)}}_{h,J}(\y)
        \right\}
    \end{pmatrix}
    \in \IR^{2J},
    \quad
    \y \in \mathbb{U},
\end{equation}
which approximates the real and imaginary parts of the output in \cref{eq:param_to_rb_coeff_map} separately.
We observe that the maps 
\begin{equation}\label{eq:real_imag_NN}
	\mathcal{A}^\Re:
	\mathbb{U}  \to \mathbb{R}^J:
	\y \mapsto {\boldsymbol \alpha^\Re ( \boldsymbol{y})}
	\quad
	\text{and}
	\quad
	\mathcal{A}^\Im:
	\mathbb{U}  \to \mathbb{R}^J:
	\y \mapsto {\boldsymbol \alpha^\Im ( \boldsymbol{y})}
\end{equation}
are $(\boldsymbol{b},p,\varepsilon)$-holomorphic as consequence of \cite[Lemma A.1]{dolz2024parametric},
thus rendering \cref{eq:pi_real_output} so as well.

For the approximation of $\boldsymbol{\pi}^{\text{(rb)}}_{h,J,\mathbb{R}}$ we seek a tanh NN $\boldsymbol{\pi}^{\text{(rb)}}_{\boldsymbol{\theta}} \in \mathcal{N\!N}_{D,W,s,2J}$ with $\boldsymbol \theta \in \Theta$,
i.e.~with $s \in \mathbb{N}$ inputs (one for each component of the parametric input $\y \in \mathbb{U}^{(s)}$), $2J$ outputs
accounting for the $J$ complex reduced coefficients, and depth and width $D$ and $W$, respectively.
The first $J$ outputs of this NN are denoted as
${\boldsymbol \alpha^\Re_{\boldsymbol \theta}( \boldsymbol{y})}$, whereas the last $J$ by ${\boldsymbol \alpha^\Im_{\boldsymbol \theta}( \boldsymbol{y})}$.
These are intended to approximate the maps defined in
\cref{eq:real_imag_NN}. We refer to \cref{fig:NN_complex_decomp} for an illustration of this architecture.
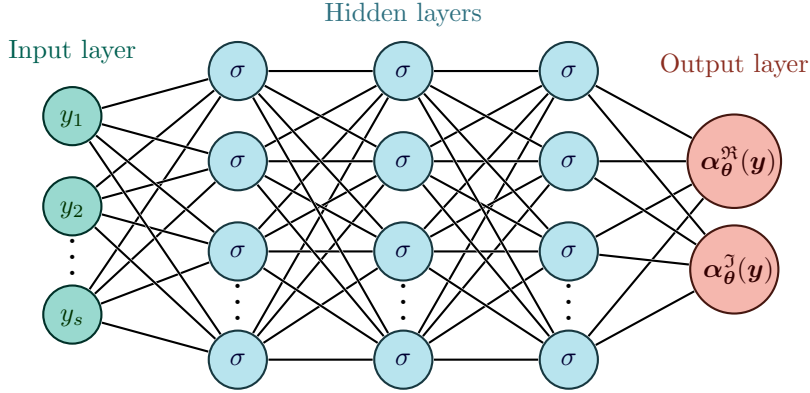
\begin{figure}[htp]
\begin{center}
\begin{tikzpicture}[x=2.2cm,y=1.2cm]
  \message{^^JNeural network, shifted}
  \readlist\Nnod{3,4,4,4,2} %
  \readlist\Nstr{s,m,m,m,L} %
  \readlist\Cstr{\strut  y,\sigma,\sigma,\sigma,{\boldsymbol \alpha (y)}} %
  \def\yshift{0.2} %
  
  \message{^^J  Layer}
  \foreachitem \N \in \Nnod{ %
    \def\lay{\Ncnt} %
    \pgfmathsetmacro\prev{int(\Ncnt-1)} %
    \message{\lay,}
    \foreach \i [evaluate={\c=int(\i==\N); \y=\N/2-\i-\c*\yshift;
    		\cu=int(\i==2); \yu=\N/2-\i-\cu*\yshift;
                 \index     = (\i<\N?int(\i):"\Nstr[\lay]");
                 \indexex = (\i<\N?int(\i):"\Nstr[\lay]");
                 \x=\lay; \n=\nstyle;}] in {1,...,\N}{ %
      \ifnum\lay=1 
      	\node[node \n] (N\lay-\i) at (\x,\y) {$\Cstr[\lay]_{\index}$};
      \fi
      \ifnum\lay>1 
      \ifnum\lay<5 
      	\node[node \n] (N\lay-\i) at (\x,\y) {$\Cstr[\lay]$};
      \fi
      \ifnum\lay=5
      	\ifnum\i=1
      	\node[node \n] (N\lay-\i) at (\x,\y) {{
	${\boldsymbol \alpha^\mathfrak{R}_{\boldsymbol \theta} ( \boldsymbol{y})}$}};
	\fi
      	\ifnum\i=2
      	\node[node \n] (N\lay-\i) at (\x,\yu) {{
	${\boldsymbol \alpha^\mathfrak{I}_{\boldsymbol \theta}(\boldsymbol{y})}$}};
	\fi
      	\ifnum\i=3
      	\node[node \n] (N\lay-\i) at (\x,\y) {{
	${\boldsymbol \alpha_{1,\mathfrak{I},\boldsymbol\theta} (\boldsymbol{y})}$}};
	\fi
      	\ifnum\i=4
      	\node[node \n] (N\lay-\i) at (\x,\y) {{
	${\boldsymbol \alpha_{L,\mathfrak{I},\boldsymbol\theta} (\boldsymbol{y})}$}};
	\fi
      \fi
      \fi
      \ifnum\lay>1 %
        \foreach \j in {1,...,\Nnod[\prev]}{ %
          \draw[connect,white,line width=1.2] (N\prev-\j) -- (N\lay-\i);
          \draw[connect] (N\prev-\j) -- (N\lay-\i);
        }
      \fi %
      
    }
    \ifnum\lay<5
    \path (N\lay-\N) --++ (0,1+\yshift) node[midway,scale=1.5] {$\vdots$};
    \fi
  }
  
  \node[above=5,align=center,mygreen!60!black] at (N1-1.90) {Input layer};
  \node[above=2,align=center,myblue!60!black] at (N3-1.90) {Hidden layers};
  \node[above=10,align=center,myred!60!black] at (N\Nnodlen-1.90) {Output layer};
  
\end{tikzpicture}
\end{center}
\caption{NN architecture for the approximation of the map
$\boldsymbol{\pi}^{\text{(rb)}}_{h,J}:\mathbb{U}  \to \mathbb{C}^J$ with the NN
$\boldsymbol{\pi}^{\text{(rb)}}_{\boldsymbol\theta}: \mathbb{U}^{(s)} \to \mathbb{R}^{2J}$
The NN accepts $s\in \mathbb{N}$ inputs in the input layer
corresponding to the components of the parametric input $\y = (y_1,\dots,y_s) \in \mathbb{U}^{(s)}$.
In addition, there are $2J$ outputs for the approximation of both the real and imaginary parts, i.e.
${\boldsymbol \alpha^\Re_{\boldsymbol \theta}( \boldsymbol{y})}$ and ${\boldsymbol \alpha^\Im_{\boldsymbol \theta}}( \boldsymbol{y})$, respectively, of the reduced coefficients.}\label{fig:NN_complex_decomp}
\end{figure}

Given an orthonormal basis $\zeta^{\text{(rb)}}_1,\dots,\zeta^{\text{(rb)}}_J$ of $X_{h,s,N,J}^{(\text{rb})}$, we define the reconstruction operator $\mathcal{R}$ for any function $\Psi\colon\mathbb{U}^{(s)}\to\mathbb{R}^{2J}$ as $\mathcal{R}(\Psi)\colon\mathbb{U}^{(s)}\to X_{h,s,N,J}^{(\text{rb})}$, with
\begin{equation}
	\mathcal{R}
	\left(
	\Psi
	\right)
	(\y)
	=
	\sum_{i=1}^{J}
	\left(
	\left(
	\Psi(\y)
	\right)_{i}
	+
	\imath
	\left(
	\Psi(\y)
	\right)_{i+J}
	\right)
	{\zeta}^{\normalfont\text{(rb)}}_i,
	\quad
	\y \in \mathbb{U}^{(s)},
\end{equation}
with $\imath$ denoting the imaginary unit. Then,
for a given $\boldsymbol\theta \in \Theta$,
the reconstruction of $\boldsymbol{\pi}^{\text{(rb)}}_{\boldsymbol\theta}$
is given by
\begin{equation}
\begin{aligned}
	u^{\text{(rb,$\mathcal{N\!N}$)}}_{J,\boldsymbol\theta}(\y)
    &
	=
	\mathcal{R}
	\left(
	\boldsymbol{\pi}^{\normalfont\text{(rb)}}_{\boldsymbol{\theta}}
	\right)
	(\y)
    \\&
    =
	\sum_{j=1}^J
	\left(
		{\boldsymbol \alpha^\mathfrak{R}_{j,\boldsymbol\theta} ( \boldsymbol{y})} 
		+
		\imath 
		{\boldsymbol \alpha^\mathfrak{I}_{j,\boldsymbol\theta} ( \boldsymbol{y})}
	\right)(\y)
	\zeta^{\text{(rb)}}_j,
	\quad
	\y \in \mathbb{U}^{(s)},
\end{aligned}
\end{equation}
where for $\y \in \mathbb{U}^{(s)}$
\begin{equation}
	{\boldsymbol \alpha^\mathfrak{R}_{\boldsymbol\theta} ( \boldsymbol{y})} 
	\coloneqq
	\begin{pmatrix}
		\boldsymbol{\alpha}^{\mathfrak{R}}_{1,\boldsymbol{\theta}}(\y) \\
		\vdots \\
		\boldsymbol{\alpha}^{\mathfrak{R}}_{J,\boldsymbol{\theta}}(\y) 
	\end{pmatrix}
	\in \mathbb{R}^J
	\quad
	\text{and}
	\quad
	{\boldsymbol \alpha^\mathfrak{I}_{\boldsymbol\theta} ( \boldsymbol{y})} 
	\coloneqq
	\begin{pmatrix}
		\boldsymbol{\alpha}^{\mathfrak{I}}_{1,\boldsymbol{\theta}}(\y) \\
		\vdots \\
		\boldsymbol{\alpha}^{\mathfrak{I}}_{J,\boldsymbol{\theta}}(\y)
	\end{pmatrix}
	\in \mathbb{R}^J.
\end{equation}
	
\subsection{Fully Discrete Error Analysis}
We present a fully discrete analysis of the Galerkin-POD NN algorithm based on the results introduced in \Cref{sec:analysis_galerkin_POD_RBM}.

For a given $s \in \mathbb{N}$, we set
\begin{equation}
	\mathcal{T}_s: \mathbb{U} \rightarrow \mathbb{U}^{(s)}:  (y_1,\ldots,y_s,y_{s+1},\ldots) \mapsto (y_1,\dots,y_s).
\end{equation}

\begin{lemma}\label{lmm:NN_bound}
Assume that $\boldsymbol{b} \in \ell^{p}(\mathbb{N})$
is strictly decreasing.
For each $n \in \mathbb{N}$, $n\geq s$, there exists a tanh NN
$\boldsymbol{\pi}^{\normalfont\text{(rb)}}_{n} \in \mathcal{N\!N}_{D,W,s,2J}$
such that
\begin{equation}
	\norm{
		\boldsymbol{\pi}^{\normalfont\text{(rb)}}_{h,J,\mathbb{R}}
		-
		\boldsymbol{\pi}^{\normalfont\text{(rb)}}
        _{n}
		\circ
		\mathcal{T}_s
	}_{L^2(\mathbb{U};\mathbb{R}^{2J})}
	\lesssim
    n^{-\left(1/p - 1/2\right)}
    +
    s^{-\left(1/p-1\right)}
\end{equation}
with 
$
	D
	=
	\mathcal{O}
	\left(
		\log_2(n)
	\right)
$
and
$
	W
	=
	\mathcal{O}(n^2)
$.
\end{lemma}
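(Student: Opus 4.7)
The plan is to split the error by triangle inequality into a parameter-space truncation contribution and a tanh-network approximation contribution on the finite-dimensional cube $\mathbb{U}^{(s)}$. Introduce the truncated coefficient map $\widetilde{\boldsymbol{\pi}}^{\text{(rb)}}_{h,J,\mathbb{R}}\colon\mathbb{U}^{(s)}\to\mathbb{R}^{2J}$ defined by $\widetilde{\boldsymbol{\pi}}^{\text{(rb)}}_{h,J,\mathbb{R}}(\y) \coloneqq \boldsymbol{\pi}^{\text{(rb)}}_{h,J,\mathbb{R}}(y_1,\dots,y_s,0,0,\dots)$. For any candidate $\boldsymbol{\pi}^{\text{(rb)}}_{n}\in\mathcal{N\!N}_{D,W,s,2J}$, the triangle inequality together with the fact that the pushforward of $\mu$ under $\mathcal{T}_s$ is $\mu^{s}$ gives
\begin{equation*}
\begin{aligned}
&\norm{\boldsymbol{\pi}^{\text{(rb)}}_{h,J,\mathbb{R}} - \boldsymbol{\pi}^{\text{(rb)}}_{n}\circ\mathcal{T}_s}_{L^2(\mathbb{U};\mathbb{R}^{2J})}\\
&\qquad\leq \norm{\boldsymbol{\pi}^{\text{(rb)}}_{h,J,\mathbb{R}} - \widetilde{\boldsymbol{\pi}}^{\text{(rb)}}_{h,J,\mathbb{R}}\circ\mathcal{T}_s}_{L^2(\mathbb{U};\mathbb{R}^{2J})} + \norm{\widetilde{\boldsymbol{\pi}}^{\text{(rb)}}_{h,J,\mathbb{R}} - \boldsymbol{\pi}^{\text{(rb)}}_{n}}_{L^2(\mathbb{U}^{(s)};\mathbb{R}^{2J})}.
\end{aligned}
\end{equation*}

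For the first, truncation, term I would reuse the argument that produced \eqref{eq:truncationerror}. The real and imaginary component maps $\mathcal{A}^\Re,\mathcal{A}^\Im$ (and hence each component of $\boldsymbol{\pi}^{\text{(rb)}}_{h,J,\mathbb{R}}$) are $(\boldsymbol{b},p,\varepsilon)$-holomorphic as discussed around \eqref{eq:real_imag_NN}. Applying the telescoping bound \cref{eq:triangle,eq:bound_der} together with \cite[Theorem~3.1]{DLS2016} component-wise yields
\begin{equation*}
\norm{\boldsymbol{\pi}^{\text{(rb)}}_{h,J,\mathbb{R}} - \widetilde{\boldsymbol{\pi}}^{\text{(rb)}}_{h,J,\mathbb{R}}\circ\mathcal{T}_s}_{L^\infty(\mathbb{U};\mathbb{R}^{2J})} \lesssim s^{-(1/p-1)},
\end{equation*}
which passes to $L^2(\mathbb{U};\mathbb{R}^{2J})$ because $\mathbb{U}$ carries the unit measure $\mu$. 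The strict monotonicity of $\boldsymbol{b}$ is used precisely at this step, and the implicit constant depends only on $p$ and $\boldsymbol{b}$, uniformly in $h$, $s$, and $J$.

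For the second, network-approximation term, I would invoke an existing tanh expression-rate result for $(\boldsymbol{b},p,\varepsilon)$-holomorphic maps on finite-dimensional cubes, such as those in \cite{DLM21,SZ2019}. Since $\widetilde{\boldsymbol{\pi}}^{\text{(rb)}}_{h,J,\mathbb{R}}$ is $(\boldsymbol{b},p,\varepsilon)$-holomorphic on $\mathbb{U}^{(s)}$ with summability sequence still in $\ell^p(\mathbb{N})$ (the tail $b_j$, $j>s$, only tightens summability), its best $n$-term Legendre polynomial approximation achieves an $L^2(\mathbb{U}^{(s)})$ error of order $n^{-(1/p-1/2)}$, strictly better than the $L^\infty$ rate $n^{-(1/p-1)}$. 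Emulating the associated multivariate Legendre polynomials by a tanh NN costs depth $\mathcal{O}(\log_2 n)$ and width $\mathcal{O}(n^2)$ via the constructions of \cite{DLM21}; stacking $2J$ such emulations in parallel (with $2J$ treated as a fixed constant independent of $n$) yields $\boldsymbol{\pi}^{\text{(rb)}}_n\in\mathcal{N\!N}_{D,W,s,2J}$ with $D=\mathcal{O}(\log_2 n)$, $W=\mathcal{O}(n^2)$, and
\begin{equation*}
\norm{\widetilde{\boldsymbol{\pi}}^{\text{(rb)}}_{h,J,\mathbb{R}} - \boldsymbol{\pi}^{\text{(rb)}}_{n}}_{L^2(\mathbb{U}^{(s)};\mathbb{R}^{2J})} \lesssim n^{-(1/p-1/2)}.
\end{equation*}
Combining the two contributions through the triangle inequality delivers the claimed estimate.

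The main obstacle is aligning the available NN expression-rate results, which are typically stated for scalar, $L^\infty$-targets, with the vector-valued $L^2$-setting while preserving constants that are uniform in $h$ and $J$. Specifically, one must (i) work with the sharper $L^2(\mathbb{U}^{(s)})$-optimal Legendre truncation to recover the exponent $1/p-1/2$ rather than $1/p-1$, and (ii) verify that the $2J$-fold parallelization and the uniformity of the holomorphy constants inherited from the discrete parameter-to-solution map introduce no hidden $h$- or $J$-dependent factors in the implicit constants; both are ultimately bookkeeping but warrant explicit checking.
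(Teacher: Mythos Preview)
Your proposal is correct and follows essentially the same route as the paper: the paper's proof is nothing more than an appeal to \cite[Lemma~5.7]{dolz2024parametric} (which packages the tanh expression-rate construction from \cite{ABDM2022}) together with the truncation estimate \eqref{eq:truncationerror}, i.e., exactly your triangle-inequality split into a dimension-truncation term and a tanh-NN approximation term on $\mathbb{U}^{(s)}$. The only cosmetic difference is the choice of reference for the NN step (the paper leans on \cite{ABDM2022} rather than \cite{DLM21,SZ2019}); your closing remark about $J$-uniformity is handled by the orthonormality of the reduced basis, which makes the $\mathbb{R}^{2J}$-holomorphy bound independent of $J$.
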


\begin{proof}
This result follows in analogy to \cite[Lemma 5.7]{dolz2024parametric}, which 
in turn uses tools from \cite{ABDM2022}, and \cref{eq:truncationerror}.
\end{proof}

Equipped with this result, together with 
the results presented in \Cref{sec:analysis_galerkin_POD_RBM},
we may state the following error bound.

\begin{theorem}\label{thm:full_error_NN}
Assume that $\boldsymbol{b} \in \ell^{p}(\mathbb{N})$
is strictly decreasing.
Then there exists $\boldsymbol{\pi}^{\normalfont\text{(rb)}}_{n}
\in \mathcal{N\!N}_{D,W,s,2J}$ such that
\begin{equation}
\begin{aligned}
	\norm{
		u
		- 
		\mathcal{R}
        \left(
            \boldsymbol{\pi}^{\normalfont\text{(rb)}}_{n}
        \right)
        \circ\mathcal{T}_s
	}_{L^2(\mathbb{U};X)}
	&
	\lesssim
    \sup_{\y \in \mathbb{U}}
    \inf_{v_h \in X_h}
    \norm{u(\y)-v_h}_X   
    +
    n^{-\left(1/p - 1/2\right)}
    \\
    &\qquad
    +
    s^{-\left(1/p-1\right)}
    +
    N^{-\frac{\alpha}{2}}
    +
    J^{-\left(1/p-1\right)}.
\end{aligned}
\end{equation}
with 
$W = \mathcal{O}(n^2)$ and $D = \mathcal{O} \left(\log_2(n)\right)$,
where for a tanh NN $\Psi$ with $s$ inputs and $2J$
outputs.
\end{theorem}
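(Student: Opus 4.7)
The plan is to reduce the theorem to the two building blocks already established: the projection error estimate in Corollary \ref{cor:full_error_projection} and the neural network approximation of the reduced coefficient map in Lemma \ref{lmm:NN_bound}. The natural glue is a triangle inequality together with the isometry property of the reconstruction operator on the orthonormal reduced basis.

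First, I would insert the orthogonal projection onto the reduced space and split
\begin{equation*}
\bigl\|u-\mathcal{R}(\boldsymbol{\pi}^{(\text{rb})}_{n})\circ\mathcal{T}_s\bigr\|_{L^2(\mathbb{U};X)}
\leq
\bigl\|u-\mathsf{P}_{X_{h,s,N,J}^{(\text{rb})}}u_h\bigr\|_{L^2(\mathbb{U};X)}
+
\bigl\|\mathsf{P}_{X_{h,s,N,J}^{(\text{rb})}}u_h-\mathcal{R}(\boldsymbol{\pi}^{(\text{rb})}_{n})\circ\mathcal{T}_s\bigr\|_{L^2(\mathbb{U};X)}.
\end{equation*}
The first term is directly controlled by Corollary \ref{cor:full_error_projection}, producing the contributions $\sup_{\y\in\mathbb{U}}\inf_{v_h\in X_h}\|u(\y)-v_h\|_X$, $s^{-(1/p-1)}$, $N^{-\alpha/2}$, and $J^{-(1/p-1)}$.

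Next I would identify the orthogonal projection with the reconstruction of the exact coefficient map. Since $\{\zeta^{(\text{rb})}_1,\dots,\zeta^{(\text{rb})}_J\}$ is orthonormal, the definition of $\boldsymbol{\pi}^{(\text{rb})}_{h,J,\mathbb{R}}$ in \cref{eq:pi_real_output} yields pointwise
\begin{equation*}
\mathsf{P}_{X_{h,s,N,J}^{(\text{rb})}}u_h(\y)
=\mathcal{R}\bigl(\boldsymbol{\pi}^{(\text{rb})}_{h,J,\mathbb{R}}\bigr)(\y),
\qquad \y\in\mathbb{U},
\end{equation*}
and Parseval's identity gives the isometry
$\|\mathcal{R}(\Psi)(\y)\|_X^2=\|\Psi(\y)\|_{\mathbb{R}^{2J}}^2$ for any $\Psi\colon\mathbb{U}^{(s)}\to\mathbb{R}^{2J}$. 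Therefore the second term equals
\begin{equation*}
\bigl\|\boldsymbol{\pi}^{(\text{rb})}_{h,J,\mathbb{R}}-\boldsymbol{\pi}^{(\text{rb})}_{n}\circ\mathcal{T}_s\bigr\|_{L^2(\mathbb{U};\mathbb{R}^{2J})},
\end{equation*}
and Lemma \ref{lmm:NN_bound} delivers a tanh NN $\boldsymbol{\pi}^{(\text{rb})}_n\in\mathcal{N\!N}_{D,W,s,2J}$ with $W=\mathcal{O}(n^2)$ and $D=\mathcal{O}(\log_2 n)$ whose approximation error is bounded by $n^{-(1/p-1/2)}+s^{-(1/p-1)}$. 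Combining the two bounds, absorbing the duplicated $s^{-(1/p-1)}$ term into the asymptotic estimate, and summing yields the claimed inequality.

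The routine obstacles are genuinely routine: verifying the Parseval identity needed to transfer the error from $X$ to $\mathbb{R}^{2J}$ (which uses only orthonormality of the reduced basis) and checking that the change of variables through $\mathcal{T}_s$ does not affect the $L^2(\mathbb{U};\cdot)$ norm because the truncation is measure-preserving. The only mildly delicate step is reconciling the $s$-dependence appearing in both the projection error (through truncation of the parametric input prior to POD) and the NN bound (through the truncation inside the NN input), which is handled by noting that both contributions share the same exponent $1/p-1$ and may be merged into a single summand.
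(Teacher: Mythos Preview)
Your proposal is correct and follows essentially the same route as the paper: triangle inequality splitting into the projection error (handled by Corollary~\ref{cor:full_error_projection}) and the reduced-coefficient error, then the orthonormality/Parseval identity to rewrite the latter as $\|\boldsymbol{\pi}^{(\text{rb})}_{h,J,\mathbb{R}}-\boldsymbol{\pi}^{(\text{rb})}_{n}\circ\mathcal{T}_s\|_{L^2(\mathbb{U};\mathbb{R}^{2J})}$, which is bounded via Lemma~\ref{lmm:NN_bound}. The paper's proof is exactly this argument, stated with slightly less detail on the isometry step.
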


\begin{proof}
Let $\boldsymbol{\pi}^{\normalfont\text{(rb)}}_{n}$ be as in 
\Cref{lmm:NN_bound}. It follows from the application of the triangle inequality
that
\begin{equation}\label{eq:bound_RB}
\begin{aligned}
	\norm{
		u
		- 
		\mathcal{R}
        \left(
            \boldsymbol{\pi}^{\normalfont\text{(rb)}}_{n}
        \right)
        \circ\mathcal{T}_s
	}_{L^2(\mathbb{U};X)}
	&\leq
	\underbrace{
	\norm{
		u
		- 
		\mathsf{P}_{X_{h,s,N,J}^{(\text{rb})}}
		u_h
	}_{L^2(\mathbb{U};X)}
	}_{(\spadesuit)}
	\\
	&\qquad
	+
	\underbrace{
	\norm{
		\mathsf{P}_{X_{h,s,N,J}^{(\text{rb})}}
		u_h
		- 
		\mathcal{R}
        \left(
            \boldsymbol{\pi}^{\normalfont\text{(rb)}}_{n}
        \right)
        \circ\mathcal{T}_s
	}_{L^2(\mathbb{U};X)}
	}_{(\clubsuit)}.
\end{aligned}
\end{equation}
The term $(\spadesuit)$ is bounded according to \cref{cor:full_error_projection}.
We proceed to bound $(\clubsuit)$. Recalling that $\zeta^{\text{(rb)}}_1,\dots,\zeta^{\text{(rb)}}_J$
is an orthonormal basis of $X_{h,J}^{(\text{rb})}$ with respect to the inner product of $X$
one may readily observe that
\begin{equation}\label{eq:bound_NN}
	\norm{
		\mathsf{P}_{X_{h,s,N,J}^{(\text{rb})}}
		u_h
		- 
		\mathcal{R}
        \left(
            \boldsymbol{\pi}^{\normalfont\text{(rb)}}_n
        \right)
        \circ\mathcal{T}_s
	}_{L^2(\mathbb{U};X)}
	=
	\norm{
		\boldsymbol{\pi}^{\text{(rb)}}_{h,J,\mathbb{R}}
		-
		\boldsymbol{\pi}^{\text{(rb)}}_n
        \circ\mathcal{T}_s
	}_{L^2(\mathbb{U};\mathbb{R}^{2J})}.
\end{equation}
The application of \cref{lmm:NN_bound} to bound \cref{eq:bound_NN} yields the assertion.
\end{proof}

Equilibrating approximation errors yields a-priori requirements for the neural network parameters to maintain the approximation rates of the quasi-Monte Carlo sampling.
\begin{corollary}\label{cor:NNparamconv}
Assume that $\boldsymbol{b} \in \ell^{p}(\mathbb{N})$
is strictly decreasing. Select $J$ as in \cref{cor:PODrankbound} and
$n\sim N^\frac{\alpha p}{2-p}$.
Then there exists a tanh NN $\boldsymbol{\pi}^{\normalfont\text{(rb)}}_{n}
\in \mathcal{N\!N}_{D,W,s,2J}$ of depth $D=\mathcal{O}\left(\log_2(n)\right)$ and width $W=\mathcal{O}\left(n^2\right)$ such that
\begin{equation}
	\norm{
		u
		- 
		\mathcal{R}
        \left(
            \boldsymbol{\pi}^{\normalfont\text{(rb)}}_{n}
        \right)
        \circ\mathcal{T}_s
	}_{L^2(\mathbb{U};X)}
	\lesssim
    \sup_{\y \in \mathbb{U}}
    \inf_{v_h \in X_h}
    \norm{u(\y)-v_h}_X
    +
    s^{-\left(1/p-1\right)}
    +
    N^{-\frac{\alpha}{2}}.
\end{equation}
\end{corollary}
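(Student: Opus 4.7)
The plan is essentially to take \Cref{thm:full_error_NN} as the starting point and verify that the proposed choices of $J$ and $n$ make two of the five error contributions disappear into the others. Specifically, \Cref{thm:full_error_NN} already gives
\begin{equation*}
\norm{u - \mathcal{R}(\boldsymbol{\pi}^{\text{(rb)}}_n)\circ\mathcal{T}_s}_{L^2(\mathbb{U};X)}
\lesssim
\sup_{\y\in\mathbb{U}}\inf_{v_h\in X_h}\norm{u(\y)-v_h}_X
+ n^{-(1/p-1/2)}
+ s^{-(1/p-1)}
+ N^{-\alpha/2}
+ J^{-(1/p-1)},
\end{equation*}
so only the last two polynomial-in-$n$ and polynomial-in-$J$ terms need to be absorbed into the $N^{-\alpha/2}$ sampling error.

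First I would handle the POD rank term: by \Cref{cor:PODrankbound}, the choice prescribed there gives $J\sim N^{\alpha/(2(1/p-1))}$ and automatically yields $J^{-(1/p-1)}\lesssim N^{-\alpha/2}$, so the last summand is already dominated by $N^{-\alpha/2}$. Next, for the neural-network term, I would solve $n^{-(1/p-1/2)} \lesssim N^{-\alpha/2}$ for $n$, which is a short algebraic manipulation:
\begin{equation*}
n \gtrsim N^{\alpha/(2(1/p-1/2))} = N^{\alpha/(2/p - 1)} = N^{\alpha p/(2-p)},
\end{equation*}
and this is precisely the rate $n\sim N^{\alpha p/(2-p)}$ assumed in the hypothesis. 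The width and depth bounds $W=\mathcal{O}(n^2)$ and $D=\mathcal{O}(\log_2 n)$ are inherited directly from the NN existence assertion of \Cref{lmm:NN_bound} (which underlies \Cref{thm:full_error_NN}) and do not require any further work.

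Substituting these two choices back into the bound of \Cref{thm:full_error_NN} collapses the last two terms into a single $N^{-\alpha/2}$ contribution, leaving exactly the three surviving terms displayed in the statement: the Galerkin best-approximation error, the dimension truncation $s^{-(1/p-1)}$, and the QMC sampling rate $N^{-\alpha/2}$. Since no new estimates are introduced — the argument is purely an equilibration of the five-term bound supplied by \Cref{thm:full_error_NN} together with the POD-rank consequence of \Cref{cor:PODrankbound} — I do not foresee any genuine obstacle; the only mild subtlety is bookkeeping the exponent algebra so that the choice $n\sim N^{\alpha p/(2-p)}$ is correctly identified as the critical scaling, and ensuring that the hidden constants remain independent of $h$, $s$, $N$, and $J$, which they do because each of the inequalities invoked already carries this independence.
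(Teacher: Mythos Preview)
Your proposal is correct and matches the paper's approach exactly: the paper does not even spell out a proof for this corollary, merely remarking that ``equilibrating approximation errors yields a-priori requirements for the neural network parameters,'' which is precisely the error-balancing of the five-term bound from \Cref{thm:full_error_NN} together with \Cref{cor:PODrankbound} that you carry out. Your exponent algebra for $n\sim N^{\alpha p/(2-p)}$ is also right.
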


\subsection{Neural Network Training}
\label{sec:NN_training}
Having proven the existence of NN with good approximation properties, it remains to comment on how to construct a realization of such NN, a procedure commonly referred to as \emph{training}. To this end, we observe that any NN $\boldsymbol{\pi}^{\text{(rb)}}_{\boldsymbol{\theta}_N}\in \mathcal{N\!N}_{D,W,s,2J}$, with $\boldsymbol{\theta}_N \in \Theta$, satisfies
\begin{equation}\label{eq:MSEmotivation}
\begin{aligned}
	&\norm{
	u
	- 
	\mathcal{R}
	\left(
	\boldsymbol{\pi}^{\normalfont\text{(rb)}}_{\boldsymbol{\theta}_N}
	\right)
	\circ\mathcal{T}_s
	}_{L^2(\mathbb{U};X)}\\
	&\quad\qquad\leq
	\norm{
	u
	- 
	\mathcal{R}
	\left(
	\boldsymbol{\pi}^{\text{(rb)}}_{h,J,\mathbb{R}}
	\right)
	\circ\mathcal{T}_s
	}_{L^2(\mathbb{U};X)}
        +
	\norm{
	\mathcal{R}
	\left(
	\boldsymbol{\pi}^{\text{(rb)}}_{h,J,\mathbb{R}}
	-
	\boldsymbol{\pi}^{\normalfont\text{(rb)}}_{\boldsymbol{\theta}_N}
	\circ\mathcal{T}_s
	\right)
	}_{L^2(\mathbb{U};X)}\\
	&\quad\qquad\leq
	\underbrace{\norm{
	u
	- 
	\mathcal{R}
	\left(
	\boldsymbol{\pi}^{\text{(rb)}}_{h,J,\mathbb{R}}
	\right)
	\circ\mathcal{T}_s
	}_{L^2(\mathbb{U};X)}}_{=(\spadesuit)}+
	\underbrace{\norm{
		\boldsymbol{\pi}^{\text{(rb)}}_{h,J,\mathbb{R}}
		-
		\boldsymbol{\pi}^{\normalfont\text{(rb)}}_{\boldsymbol{\theta}_N}
	}_{L^2(\mathbb{U}^{(s)};\mathbb{R}^{2J})}}_{=(\clubsuit)},
\end{aligned}
\end{equation}
where $\boldsymbol{\pi}^{\text{(rb)}}_{h,J,\mathbb{R}}$ is as in \cref{eq:pi_real_output}. While $(\spadesuit)$ is estimated using \cref{cor:PODrankbound}, in analogy to
\cref{lem:QMCerror}, we consider
an approximation for $(\clubsuit)$ of the form
\begin{equation}\label{eq: MSE_loss_nn}
	L_{\text{MSE}}(\boldsymbol\theta_N)
	\coloneqq
	\frac{1}{N}\sum_{i = 1}^{N}
	\norm{
		\boldsymbol{\pi}^{\text{(rb)}}_{h,J,\mathbb{R}}
		\left(\y^{(i)}\right) 
		- 
		\boldsymbol{\pi}^{\text{(rb)}}_{\boldsymbol{\theta}_N}\left(\y^{(i)}\right)
	}_{\mathbb{R}^{2J}}^2,
\end{equation}
where $\y^{(i)} \in \mathbb{U}^{(s)}, i = 1, \ldots, N$, are training inputs with $\boldsymbol{\pi}^{\text{(rb)}}_{h,J,\mathbb{R}}
\left(\y^{(i)}\right)$ being the corresponding high-fidelity snapshots $u_h\left(\y^{(i)}\right), i = 1, \ldots, N$, projected on the POD basis.
$L_{\text{MSE}}$ is known as the \emph{mean squared error (MSE)}.

As per customary, in the following 
we work under the assumption that
\begin{equation}\label{eq:MSEassumption}
	\norm{
		\boldsymbol{\pi}^{\text{(rb)}}_{h,J,\mathbb{R}}
		-
		\boldsymbol{\pi}^{\normalfont\text{(rb)}}_{\boldsymbol{\theta}_N}
	}_{L^2(\mathbb{U}^{(s)};\mathbb{R}^{2J})}
	\approx
	\sqrt{L_{\text{MSE}}(\boldsymbol\theta_N)},
\end{equation}
and we focus on adjusting the parameters $\boldsymbol{\theta}_N$ such that $L_{\text{MSE}}(\boldsymbol \theta_N)$ is minimized. In fact, the mean squared error is one of the most common choices for NN training and readily implemented in many software packages. While solving the corresponding optimization problem is known to be difficult, our analysis provides us at least with a sufficient stopping criterion for optimization. More precisely, stopping the optimization procedure when
\begin{equation}\label{eq:stopping_criterion}
	\sqrt{L_{\text{MSE}}(\boldsymbol \theta)}
	\lesssim
	\sup_{\y \in \mathbb{U}}
	\inf_{v_h \in X_h}
	\norm{u(\y)-v_h}_X
	+
	s^{-\left(\frac{1}{p}-1\right)}
	+
	N^{-\frac{\alpha}{2}}
\end{equation}
and assuming \cref{eq:MSEassumption} implies with \cref{eq:MSEmotivation} that
\begin{equation}\label{eq:desired_nn_estimate}
	\norm{
		u
		- 
		\mathcal{R}
		\left(
		\boldsymbol{\pi}^{\normalfont\text{(rb)}}_{\boldsymbol{\theta}_N}
		\right)
		\circ\mathcal{T}_s
	}_{L^2(\mathbb{U};X)}
	\lesssim
	\sup_{\y \in \mathbb{U}}
	\inf_{v_h \in X_h}
	\norm{u(\y)-v_h}_X
	+
	s^{-\left(\frac{1}{p}-1\right)}
	+
	N^{-\frac{\alpha}{2}}.
\end{equation}

\begin{remark}
In our numerical experiments below we observe that \cref{eq:stopping_criterion} does not always imply \cref{eq:desired_nn_estimate}, especially when $N$ is large. This indicates that the common assumption \cref{eq:MSEassumption} needs further consideration to close the gap between theory and practice.
Indeed, for the approximation stated
in \cref{eq:MSEassumption} to be valid up to a
prescribed accuracy depending upon the total number of samples $N$ when using QMC points, 
one needs to study the $(\boldsymbol{b},p,\epsilon)$-holomorphy property of the map 
$\y \mapsto \boldsymbol{\pi}^{\normalfont\text{(rb)}}_{\boldsymbol{\theta}_N}$ for a given configuration of weights 
$\boldsymbol{\theta}_N \in \Theta$. This has been thoroughly addressed in \cite{LMR2020}
and more recently in \cite{keller2025regularity}, where conditions on the NN weights are established for the aforementioned property to hold. Whether these conditions are compatible with existing neural network approximation results such as \cref{lmm:NN_bound} remains to be clarified.
\end{remark}

\section{Application: Sound-soft Acoustic Scattering}
\label{eq:sound_soft_scattering}
We consider a concrete application that fits the framework of \cref{sec_rom}: The scattering by a parametrically defined
sound-soft object in three spatial dimensions. The following section recapitulates the notation and main result of \cite{dolz2024parametric}.

\subsection{Parametrized Domain Deformations}
Let $\widehat{\D}\subset\IR^3$  be a bounded reference domain
with Lipschitz boundary $\widehat\Gamma=\partial\widehat{\D}$ and set $\br_\y\colon\widehat{\Gamma}\rightarrow\IR^3$ with
\begin{equation}\label{eq:affine_parametric_representation}
\br_\y(\bxref)
=
\bvarphi_0(\bxref)
+
\sum_{j\geq 1}
y_j
\bvarphi_j(\bxref),
\quad
\bxref \in \widehat\Gamma,
\quad
\y
=
(y_j)_{j\geq1}
\in 
\mathbb{U},
\end{equation}
and $\bvarphi_j\colon\widehat{\Gamma}\to\IR^3$ for $j\in \IN$.
This gives rise to a collection of parametric boundaries
$\left\{\Gamma_\y\right\}_{\y \in \mathbb{U}}$ of the form 
\begin{equation}\label{eq:parametric_boundary}
\Gamma_{\y} 
\coloneqq 
\{
\bx\in \IR^3:\; 
\bx
= 
\br_\y(\bxref),
\quad
\bxref \in \widehat\Gamma
\},
\end{equation}
In the following, we work under the assumptions stated below.

\begin{assumption}\label{assump:parametric_boundary}
	Let $\widehat\Gamma$ be the reference Lipschitz boundary.
	\begin{enumerate}
		\item\label{assump:parametric_boundary1}
		The functions $(\bvarphi_i)_{i\in \IN} \subset \mathscr{C}^{0,1}(\widehat\Gamma; \IR^3)$
		are such that for each $\y\in\mathbb{U}$ one has that
		$\br_\y\colon\widehat{\Gamma}\to\Gamma_\y$ is bijective and bi-Lipschitz, and $\Gamma_\y$ is the boundary of a Lipschitz domain.
		\item\label{assump:parametric_boundary2}
		There exists $p\in(0,1)$ such that 
		$
		\boldsymbol{b}
		\coloneqq
		\left(
		\norm{\bvarphi_j}_{\mathscr{C}^{0,1}(\widehat{\Gamma},\IR^3)} 
		\right)_{j\in \IN}
		\in 
		\ell^p(\IN).
		$
		\item\label{assump:parametric_boundary3}
		There is a decomposition $\mathcal{G}$ of $\widehat{\Gamma}$ such that
		for each $\y \in \mathbb{U}$
		and each ${\tau} \in \mathcal{G}$ one has that
		$\br_\y\circ\chi_\tau\in \mathscr{C}^{1,1}(\tilde{\tau};\IR^3)$.
	\end{enumerate}
\end{assumption}
\Cref{assump:parametric_boundary1} and \cref{assump:parametric_boundary3} of the assumption guarantee that all parametric boundaries $\Gamma_\y$ are Lipschitz and piecewise $\mathscr{C}^{1,1}$. 
The bijectivity of the boundary transformations implies that each $\Gamma_\y$ is the boundary of a parametrized domain $\D_\y$ which has the same genus as $\widehat{\D}$.
Moreover, \cref{assump:parametric_boundary2} implies absolute convergence of \cref{eq:affine_parametric_representation} as an element of $\mathscr{C}^{0,1}$. A further consequence is that the pullback operator, defined as 
$
\tau_\y \varphi
\coloneqq
\varphi\circ\boldsymbol{r}_\y
\in L^2(\widehat\Gamma),
$
for each $\y \in \mathbb{U}$ and $\varphi \in L^2(\Gamma_\y)$ is an isomorphism, i.e., 
$\tau_\y\in\mathscr{L}_{\normalfont{\text{iso}}}\left(L^2(\Gamma_\y),L^2(\widehat{\Gamma})\right)$
for each $\y \in \mathbb{U}$, see, e.g, \cite[Lemma 2.13]{dolz2024parametric}.

\subsection{Application: Sound-Soft Acoustic Scattering}
In the following, we denote by $\D_\y\subset\IR^3$ 
the domain enclosed by $\Gamma_\y$ and by $\D_\y^\cc \eqqcolon \mathbb{R}^3 \backslash\overline{\D_\y}$ 
the corresponding exterior domain. 

Provided a wavenumber $\kappa>0$ and an incident direction $\boldsymbol{\widehat{d}}_{\text{inc}} \in \mathbb{S}^2
\coloneqq \{\bx \in \IR^3: \norm{\bx}=1\}$, 
we define an incident plane wave $u^\text{inc}(\bx)\coloneqq\exp(\imath
\kappa \bx \cdot \boldsymbol{\widehat{d}}_{\text{inc}})$.
The aim is then to find the sound-soft scattered wave $u_\y^\text{scat}\in H^1_\loc(\D^\cc)$
such that the total field $u_\y\coloneqq u^\text{inc}+ u_\y^\text{scat}$ satisfies
\begin{subequations}\label{eq:scattering}
\begin{align}
	\Delta u_\y + \kappa^2 u_\y &=0, 
    \quad \text{in } \D_\y^\cc,\label{eq:sound_soft}
	\\
	u_\y &= 0, \quad \text{on } \Gamma_\y,
\end{align}
\end{subequations}
and the scattered field additionally satisfies the Sommerfeld
radiation condition
\begin{align}\label{eq:Sommerf}
    \frac{\partial u^{\text{scat}}_\y}{\partial r}(\bx) 
    - 
    \imath \kappa u^{\text{scat}}_\y(\bx) = o \left( r^{-1}\right)
\end{align}
as $r\coloneqq \norm{\bx}\rightarrow \infty$, uniformly in $\widehat{\bx}:= \bx/r$.
Thus, since $u^{\text{inc}}$ satisfies \cref{eq:sound_soft} on its own,
\cref{eq:scattering} can be cast as follows: find $u_\y^{\text{scat}}\in H_\loc^1(\D^\cc)$ such that
\begin{subequations}\label{eq:sound_soft_problem}
\begin{align}
\Delta u_\y^\text{scat} + \kappa^2 u_\y^\text{scat} &=0,\quad\qquad\text{in } \D_\y^\cc, 
\label{eq:HHeqn}
\\
u_\y^\text{scat} &=-u^\text{inc}, \quad\text{on } \Gamma_\y, 
\label{eq:SoundSftBc}
\end{align}
\end{subequations}
and \cref{eq:Sommerf} holds. \Cref{eq:sound_soft_problem} has a unique solution, which may be obtained in terms of a boundary integral formulation as outlined in the following.

\subsection{Boundary Integral Formulation}\label{sec:BIF}
Standard results yield the following representation for
the scattered field $u_\y^\text{scat}: \D_\y^\cc \rightarrow \IC$ in
terms of the unknown Neumann datum 
\begin{align}\label{eq:int_rep_form1}
u_\y^{\text{scat}}(\bx)
=
-
\mathcal{S}^{(\kappa)}_{\Gamma_\y} 
\left(
\frac{\partial u_\y}{\partial\mathbf{n}_{\Gamma_\y}}
\right)(\bx),
\quad
\bx \in \D_\y^\cc,
\end{align}
with $\mathcal{S}_{\Gamma_\y}^{(\kappa)}: H^{-\frac{1}{2}}(\Gamma_\y) \rightarrow
H^1_\loc(\Delta,\D_\y^c)$ being the acoustic single layer potential
(for further details we refer to \cite{SS10}).
Define the Dirichlet and Neumann traces onto $\Gamma_{\y}$ as
\begin{equation}
\gamma_{\Gamma_\y}^\cc\colon H^1_\loc(\D_\y^\cc)\to H^{\half}({\Gamma_\y})
\quad
\text{and}
\quad
\frac{\partial}{\partial\mathbf{n}_{\Gamma_\y}}\colon H^1(\Delta,\D_\y^\cc)\to H^{-\half}({\Gamma_\y}),
\end{equation}
which applied to \cref{eq:int_rep_form1} yield
\begin{subequations}
	\begin{align}
	\OV^{(\kappa)}_{\Gamma_\y} 
	\frac{\partial u_\y}{\partial\mathbf{n}_{\Gamma_\y}} 
	&= 
	\gamma^\cc_{\Gamma_\y} u^\text{inc}, 
	\quad \text{and} \; \Gamma_\y,\quad \text{and},  \label{eq:BIE1}\\
	\left( \half \mathsf{Id} + \OK^{(\kappa)'}_{\Gamma_\y}\right) 
	\frac{\partial u_\y}{\partial\mathbf{n}_{\Gamma_\y}} 
	&= 
	\frac{\partial u^\text{inc}}{\partial\mathbf{n}_{\Gamma_\y}},  
	\quad \text{on} \; \Gamma_\y,  \label{eq:BIE2}
	\end{align}
\end{subequations}
with the single layer operator
\begin{align}\label{eq:SLO}
\OV^{(\kappa)}_{\Gamma_\y}\isdef\gamma_{\Gamma_\y}^\cc\mathcal{S}^{(\kappa)}_{\Gamma_\y}\colon H^{-\half}(\Gamma_\y)\to H^{\half}(\Gamma_\y),
\end{align}
and the adjoint double layer operator
\begin{align}\label{eq:ALO}
\half \mathsf{Id}+\OK^{(\kappa)'}_{\Gamma_\y}\isdef\frac{\partial}{\partial\mathbf{n}_{\Gamma_\y}}\mathcal{S}^{(\kappa)}_{\Gamma_\y}\colon H^{-\half}(\Gamma_\y)\to H^{-\half}(\Gamma_\y).
\end{align}

\subsection{Combined Boundary Integral Formulation}
\label{sec:A_combined}
Given a {\it coupling parameter} $\eta \in \IR \backslash \{0\}$
we combine \cref{eq:BIE1} and \cref{eq:BIE2} to define
\begin{align}\label{eq:directBIO}
\OA^{(\kappa,\eta)'}_{\Gamma_\y}
\coloneqq
\half\mathsf{Id} 
+ 
\OK^{(\kappa)'}_{\Gamma_\y}
- 
\imath \eta \OV^{(\kappa)}_{\Gamma_\y}.
\end{align}
Exploiting that $\phi_\y\isdef\frac{\partial u_\y}{\partial\mathbf{n}_{\Gamma_\y}} \in L^2(\Gamma_\y)$, 
see, e.g., \cite{Nec1967}, a new boundary integral
approach to \cref{eq:sound_soft} is to solve for $\phi_\y \in L^2(\Gamma_\y)$ such that
\begin{align}\label{eq:A_combined}
	\OA^{(\kappa,\eta)'}_{\Gamma_\y}
	\phi_\y
	=
	f_\y
	\isdef
	\frac{\partial u^{\normalfont\text{inc}} }{\partial\mathbf{n}_{\Gamma_\y}} 
	-
	\imath \eta
	\gamma^\cc_{\Gamma_\y}
	u^{\normalfont\text{inc}}
	\in 
	L^{2}(\Gamma_\y).
\end{align}
The operator $\OA^{(\kappa,\eta)'}_{\Gamma_\y}\colon L^2(\Gamma_\y)\to L^2(\Gamma_\y)$ is a boundedly invertible and continuous linear operator for any $\kappa \in \IR_{+}$, unlike the first and second kind BIEs \cref{eq:BIE1} and \cref{eq:BIE2}, respectively, further rendering \cref{eq:A_combined} well-posed in $L^2(\Gamma_\y)$. 

We recall that the \emph{parameter-to-solution map}
\begin{align}\label{eq:parameter-to-solution}
\mathbb{U}\to L^2(\widehat\Gamma)\colon\y\mapsto\widehat{\phi}_\y \isdef\tau_\y\big(\phi_\y\big),
\end{align}
the \emph{parameter-to-operator map}
\begin{equation}\label{eq:parameter-to-operator}
\mathbb{U}
\rightarrow
\mathscr{L}
\left(
L^2(\widehat{\Gamma}),
L^2(\widehat{\Gamma})	
\right)
\colon
\y
\mapsto
\widehat{\mathsf{A}}_\y^{(\kappa,\eta)'}
\coloneqq
\tau_\y
\;
\mathsf{A}_{\Gamma_\y}^{(\kappa,\eta)'}
\tau^{-1}_\y,
\end{equation}
and the \emph{parameter-to-inverse-operator map}
\begin{equation}\label{eq:parameter-to-inverse-operator}
\mathbb{U}
\rightarrow
\mathscr{L}
\left(
L^2(\widehat{\Gamma}),
L^2(\widehat{\Gamma})	
\right)
\colon
\y
\mapsto
\Big(\widehat{\mathsf{A}}_\y^{(\kappa,\eta)'}\Big)^{-1}
=
\tau_\y
\;
\Big(\mathsf{A}_{\Gamma_\y}^{(\kappa,\eta)'}\Big)^{-1}
\tau^{-1}_\y,
\end{equation}
are $(\boldsymbol{b},p,\varepsilon)$-holomorphic and continuous if \cref{assump:parametric_boundary} is satisfied, see \cite[Lemma 2.4.2 and Corollaries 4.7 and 4.8]{dolz2024parametric}. Thus, setting $X=L^2(\widehat\Gamma)$, and
\begin{align}
\mathsf{a}(u,v;\y)
&=
\Big(\widehat{\mathsf{A}}_\y^{(\kappa,\eta)'}u,v\Big)_{L^2(\widehat{\Gamma})}\in B(L^2(\widehat\Gamma)),\label{eq:parametricblf}\\
\mathsf{f}(v;\y)
&=
(\tau_\y f_\y,v)_{L^2(\widehat{\Gamma})}\in L^2(\widehat\Gamma)\label{eq:parametriclf},
\end{align}
$\widehat{\phi_\y}$ is a solution to \cref{eq:continuous_var_prb_a} with $\mathsf{a}$ and $\mathsf{f}$ satisfying \cref{eq:lcontinuity}, \cref{eq:acontinuity}, and \cref{eq:ainfsup}.
Moreover, for a sequence of finite-dimensional, one parameter subspaces $\{\widehat{X_h}\}_{h>0}\subset L^2(\widehat\Gamma)$ that are densely embedded in $L^2(\widehat{\Gamma})$, one can show that there exists $h_0>0$ such that the discrete inf-sup condition \cref{eq:ainfsupdisc} holds for all $h\leq h_0$ see,~e.g., \cite{Kre2014}. Thus, all the derived results in \cref{sec:analysis_galerkin_POD_RBM} and \cref{sec:fully_discrete_GPOD_NN} apply to this case,
in particular \cref{cor:PODrankbound}
and \cref{cor:NNparamconv}.

\begin{remark}
Although here we rely on the results from \cite{dolz2024parametric}, one can certainly
extend these results to boundary integral formulations for open arcs following \cite{PHJ23}, two-dimensional 
boundary integral operators in fractional Sobolev spaces as in \cite{henriquez2021shape,HS21,DLM22},
and boundary integral formulations for time-dependent parabolic problems \cite{DL23}. 
\end{remark}

\begin{remark}
In the computational implementation of \cref{eq:parametricblf} and \cref{eq:parametriclf}, one may choose to define subspaces $X_{h,\y}=\tau_\y^{-1}\widehat{X_h}\subset L^2(\Gamma_\y)$ for which \cref{assump:parametric_boundary} yields a parameter-dependent sequence of one-parameter finite-dimensional subspaces which are densely embedded in $L^2(\Gamma_\y)$. The computations can then be carried out in the physical domain by using readily available software packages with a subsequent pullback of the solution to the reference domain.
\end{remark}

\section{Numerical Experiments}
\label{sec:numerical_experiment}
\subsection{Model problem}\label{sec:model}
For the numerical experiments, we consider the
sound-soft acoustic scattering problem posed on three-dimensional parametric Lipschitz boundaries, exactly as discussed in \cref{eq:sound_soft_scattering}.
Our goal is to learn the corresponding parameter-to-solution map \cref{eq:parameter-to-solution} following \cref{sec:formulation_learning}.

To this end, we choose our reference boundary $\widehat\Gamma$ to be the three-dimensional turbine geometry portrayed in \cref{fig:turbine}.
\begin{figure}
    \centering
    \includegraphics[width=0.6\linewidth,clip=true,trim=780 480 600 320]{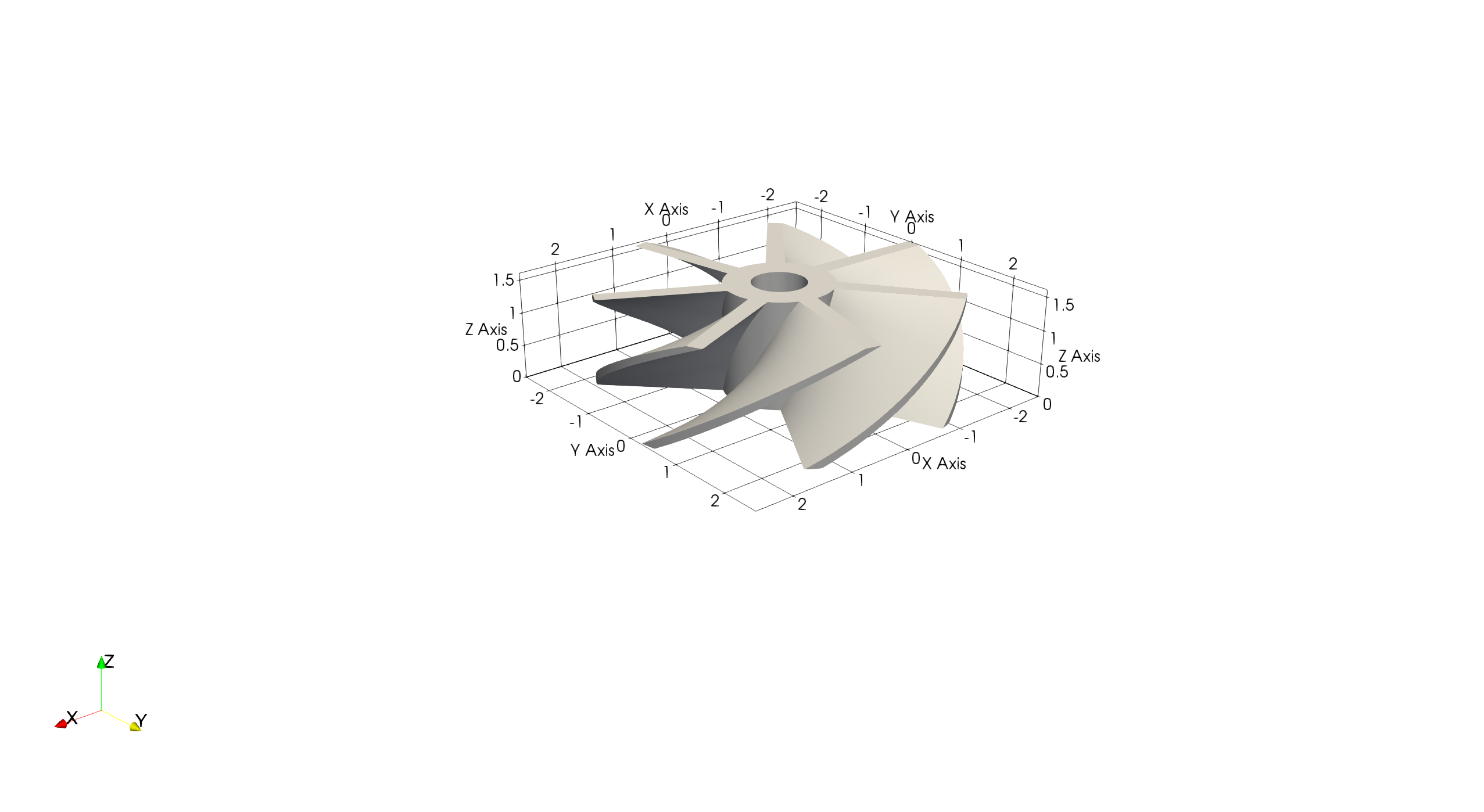}
    \caption{The turbine geometry which is randomly deformed.}
    \label{fig:turbine}
\end{figure}
For the domain deformations $\br_\y\colon\widehat{\Gamma}\rightarrow\IR^3$ we choose a scaled version of \cref{eq:affine_parametric_representation} with $\bvarphi_0(\bxref)=\bxref$ and $\bvarphi_i(\bxref)=\sqrt{\lambda_k}\bchi_k(\bxref)$, $k\in\mathbb{N}$, where $(\lambda_k,\bchi_k)$ are the eigenpairs of the covariance operator $\mathcal{C}\colon \big[L^2(\widehat\Gamma)\big]^3\to \big[L^2(\widehat\Gamma)\big]^3$ given by
\begin{align}\label{eq:surfacecovarianceoperator}
(\mathcal{C}\bu)(\bxref)\isdef\int_{\widehat\Gamma}\Cov[\br](\bxref,\yref)\bu(\yref)\text{d}\sigma_{\yref}
\end{align}
with
\[
\Cov[\br](\bxref,\yref)
=
\begin{pmatrix}
\frac{4}{5}K_{7/2}\Big(\frac{2\|\bxref-\yref\|_2}{3}\Big) & \frac{1}{10}K_{7/2}\Big(\frac{\|\bxref-\yref\|_2}{8}\Big) & 0 \\
\frac{1}{10}K_{7/2}\Big(\frac{\|\bxref-\yref\|_2}{8}\Big) & \frac{2}{5}K_{7/2}\Big(\frac{\|\bxref-\yref\|_2}{6}\Big) & 0 \\
0 & 0 & \frac{2}{10}K_{7/2}\Big(\frac{\|\bxref-\yref\|_2}{24}\Big)
\end{pmatrix}.
\]
Here, $K_{7/2}\colon\mathbb{R}_{>0}\to\mathbb{R}_{>0}$ refers to the Mat\'ern $7/2$-kernel, which is given as
\[
K_{7/2}(r)=\bigg(1 + 3r + \frac{27r^2}{7} + \frac{18r^3}{7} + \frac{27r^4}{35}\bigg) e^{-3r}
\]
and implies $p=4/9$, cf., e.g., \cite{GKN+2015}.
A few examples of parametrized domain boundaries generated by these domain deformations are shown in \cref{fig:turbine_deformed}.
\begin{figure}
    \centering
    \includegraphics[width=0.3\linewidth,clip=true,trim=1000 400 900 400]{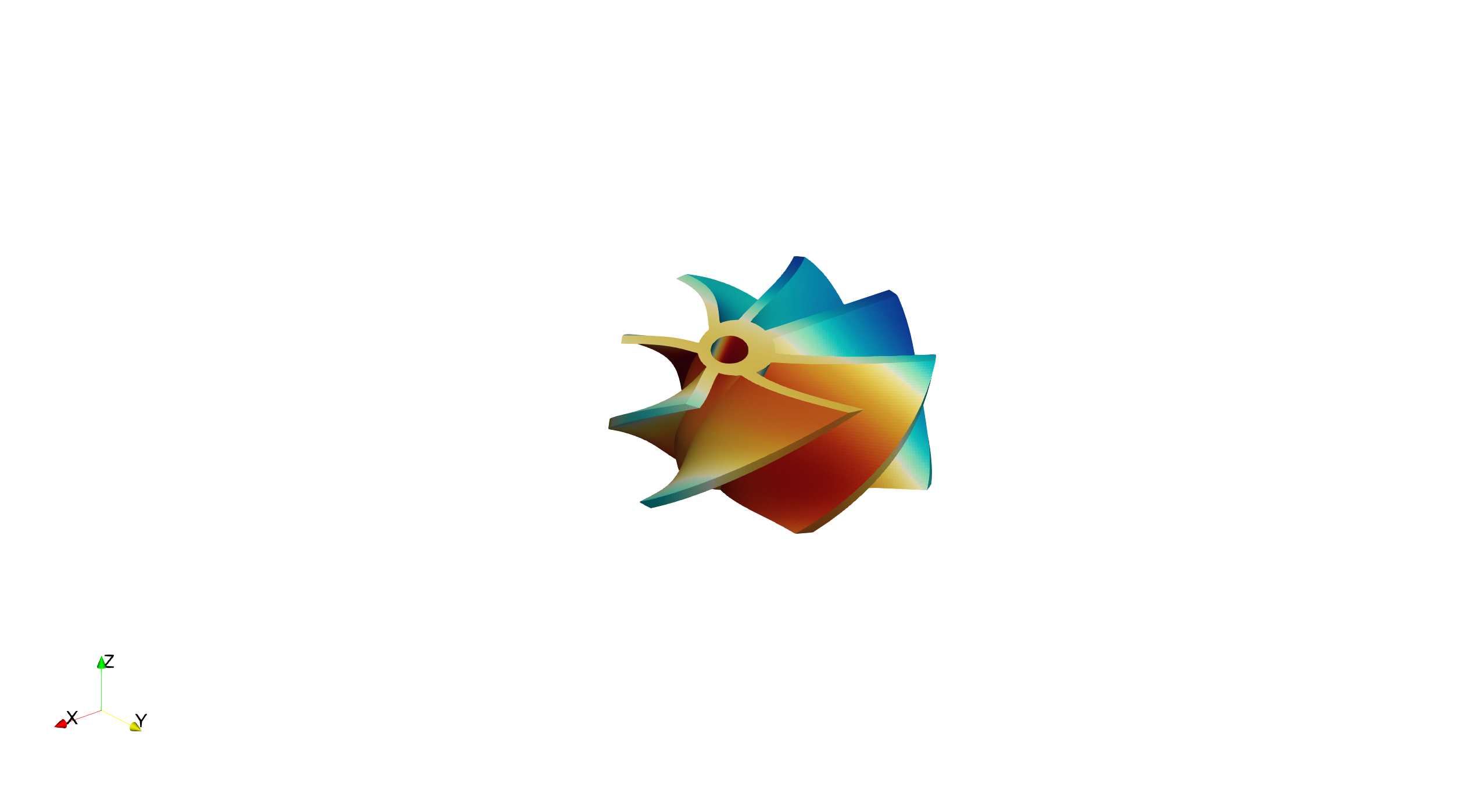}
    \includegraphics[width=0.3\linewidth,clip=true,trim=1000 450 700 250]{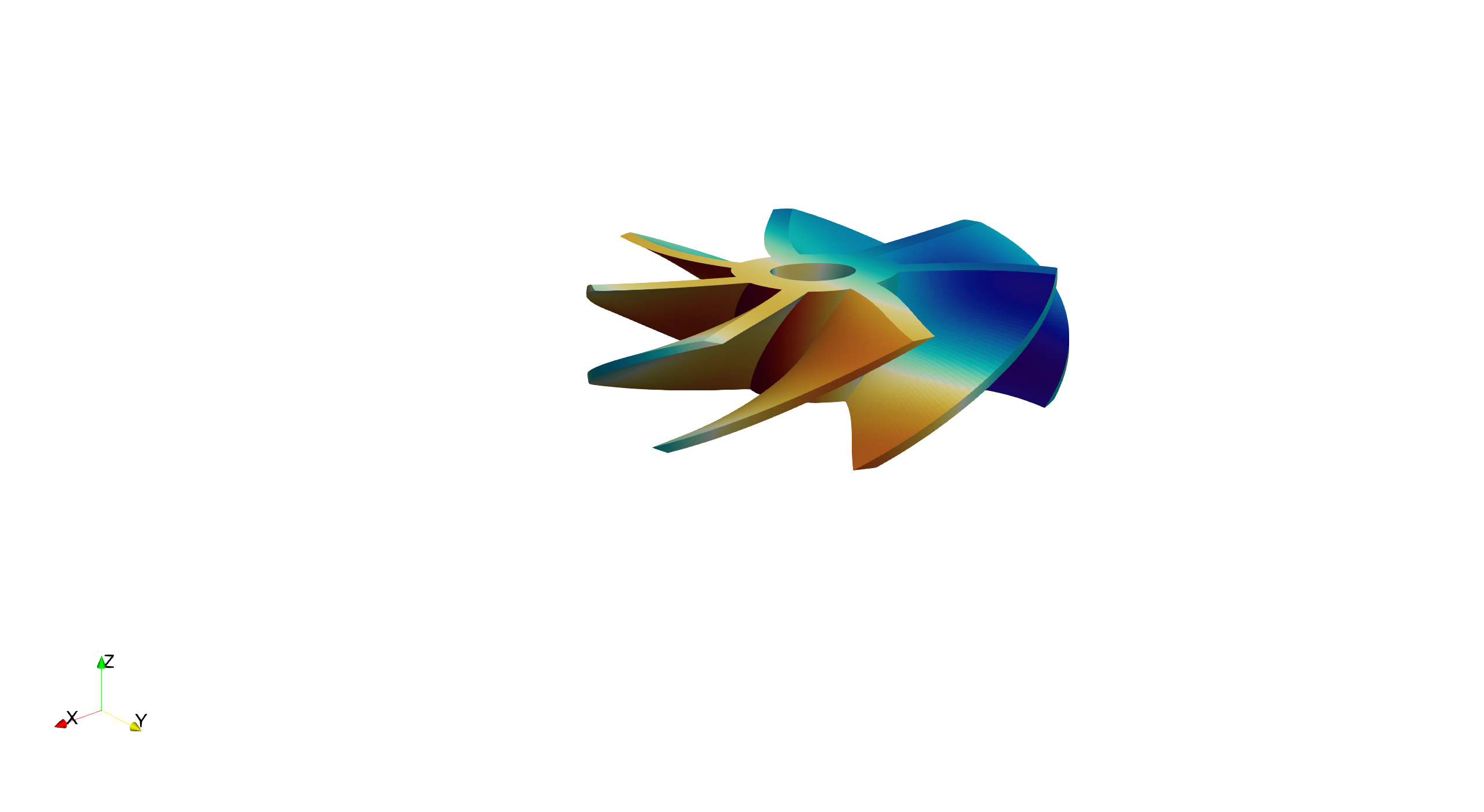}
    \includegraphics[width=0.3\linewidth,clip=true,trim=1100 550 800 300]{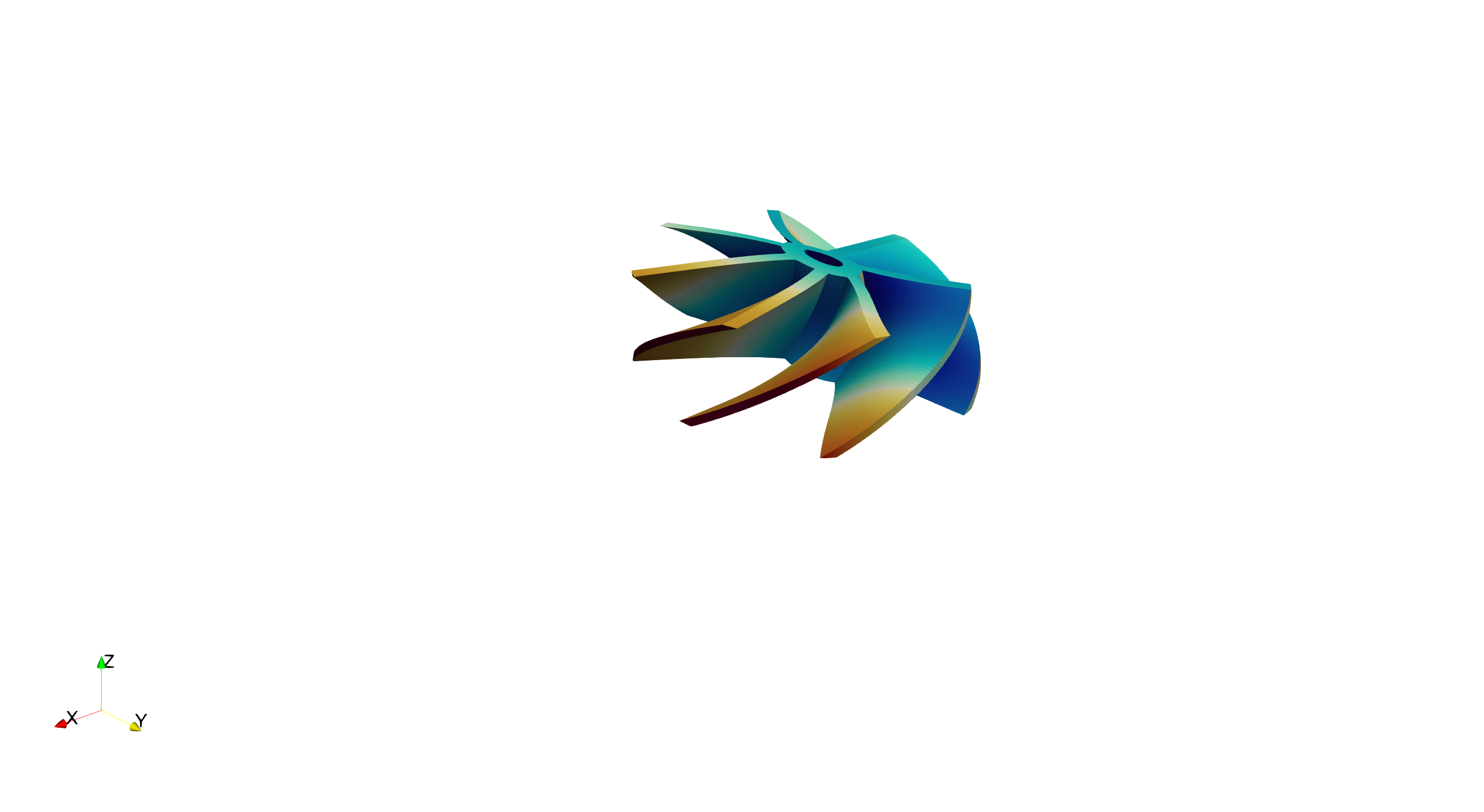}\\
    \includegraphics[width=0.3\linewidth,clip=true,trim=1000 400 900 400]{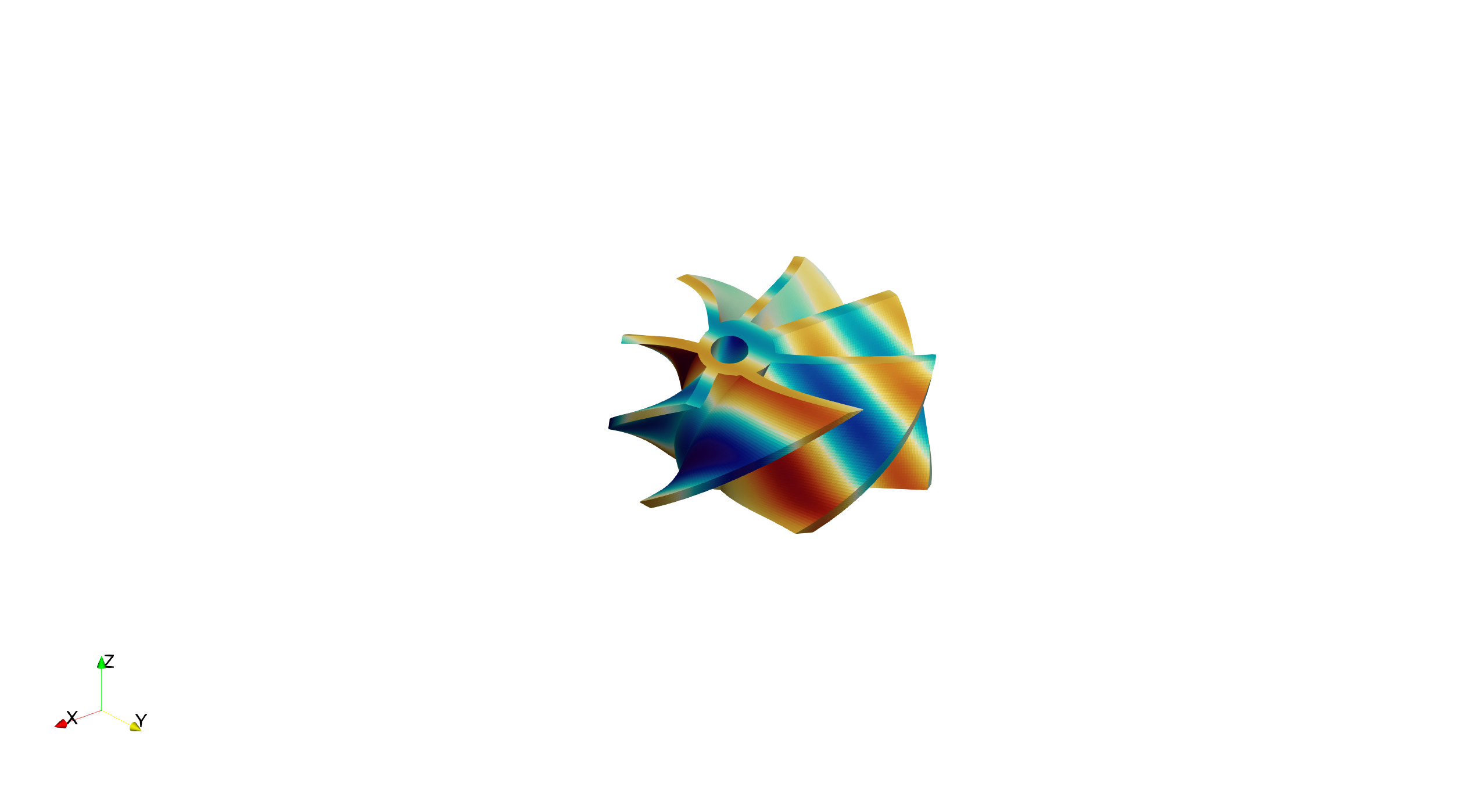}
    \includegraphics[width=0.3\linewidth,clip=true,trim=1000 450 700 250]{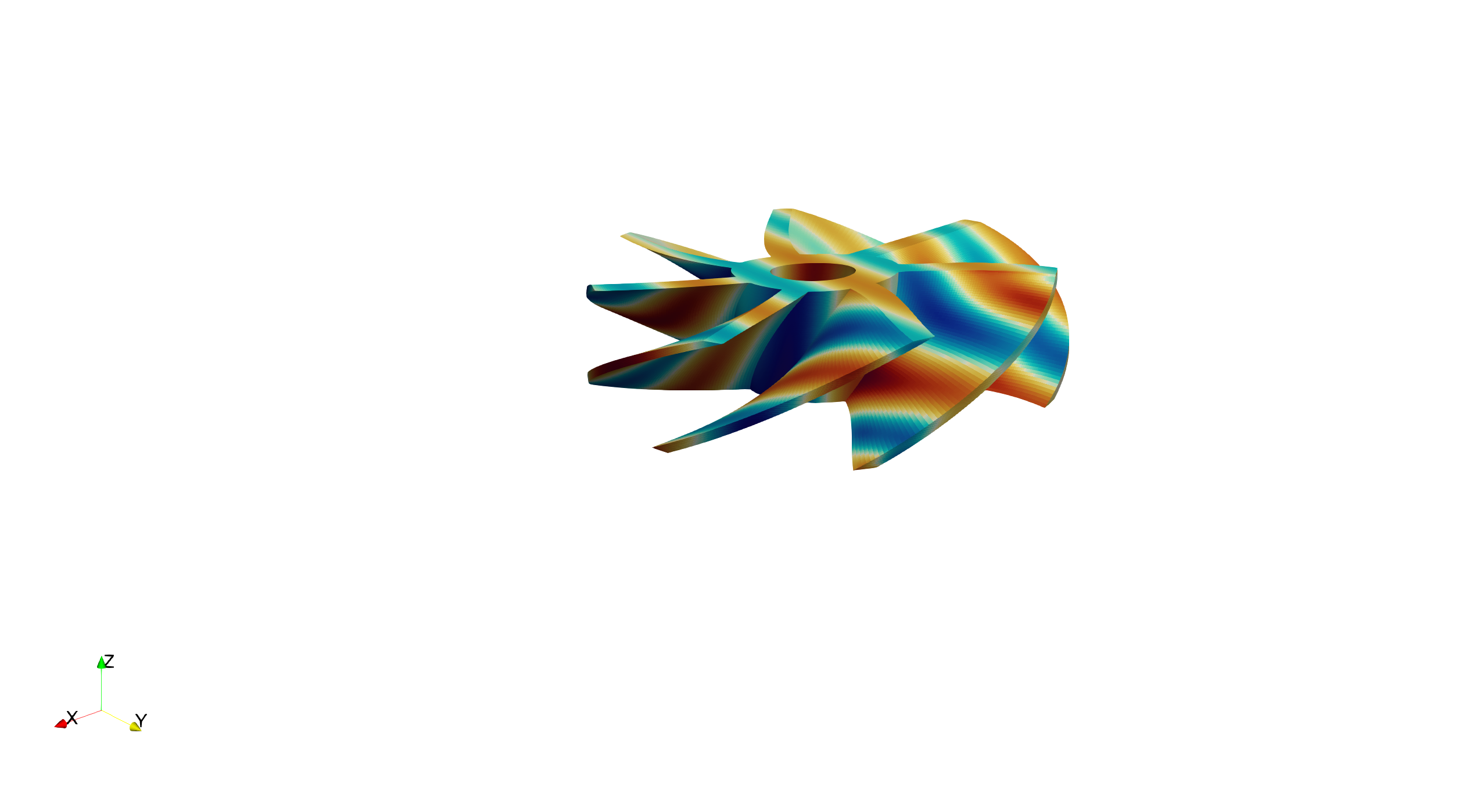}
    \includegraphics[width=0.3\linewidth,clip=true,trim=1100 500 800 300]{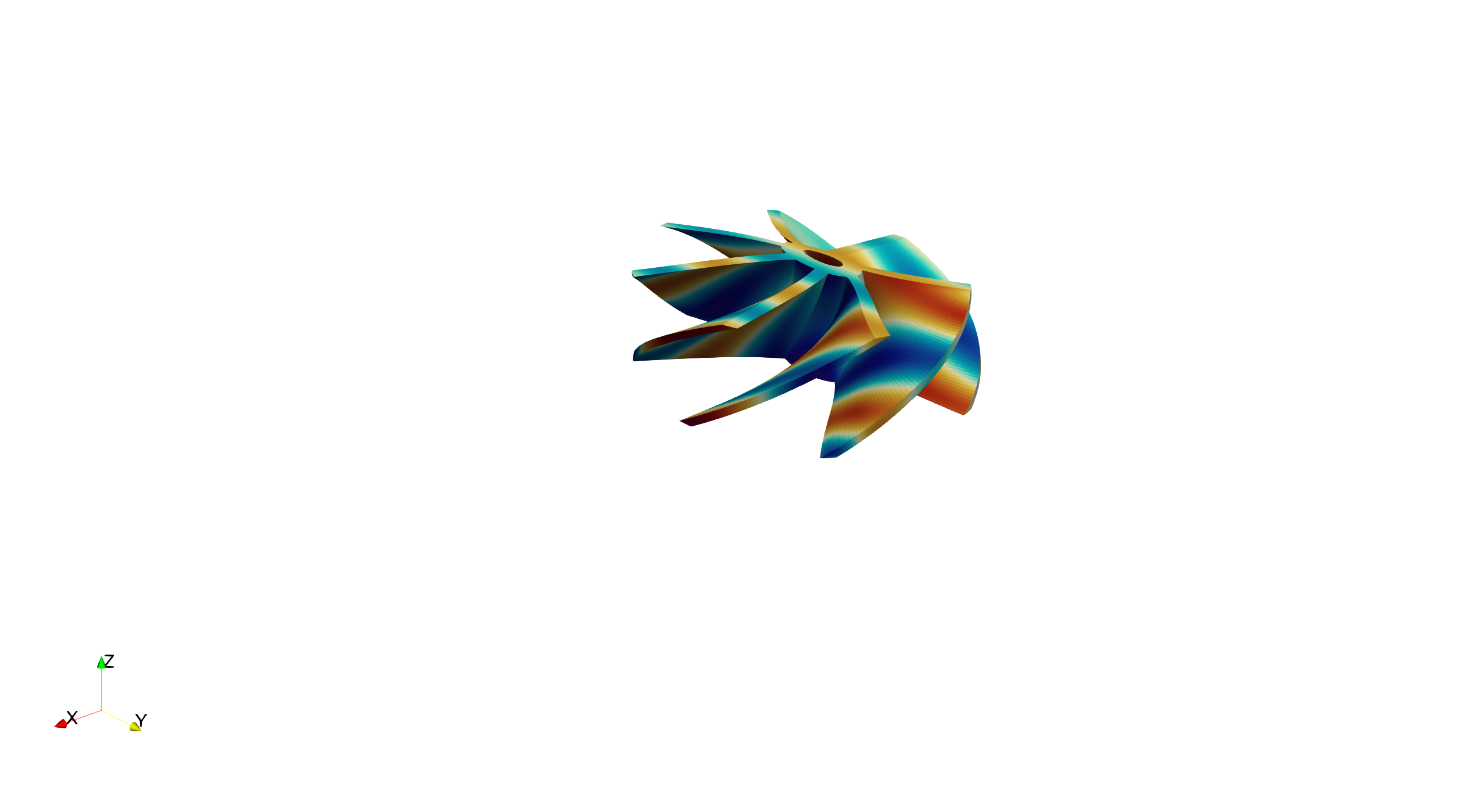}
    \caption{Realizations of the randomly deformed domain. Colors illustrate the real part of the values of the parameter-to-solution map given by \cref{eq:A_combined} for the wavenumbers $\kappa=1$ (top) and $\kappa=4$ (bottom).}
    \label{fig:turbine_deformed}
\end{figure}
The decay of the eigenvalues of the covariance operator \cref{eq:surfacecovarianceoperator} is depicted in \cref{fig:eigenvaluedecay}.
\begin{figure}
    \centering
    \includegraphics[height=0.45\textwidth]{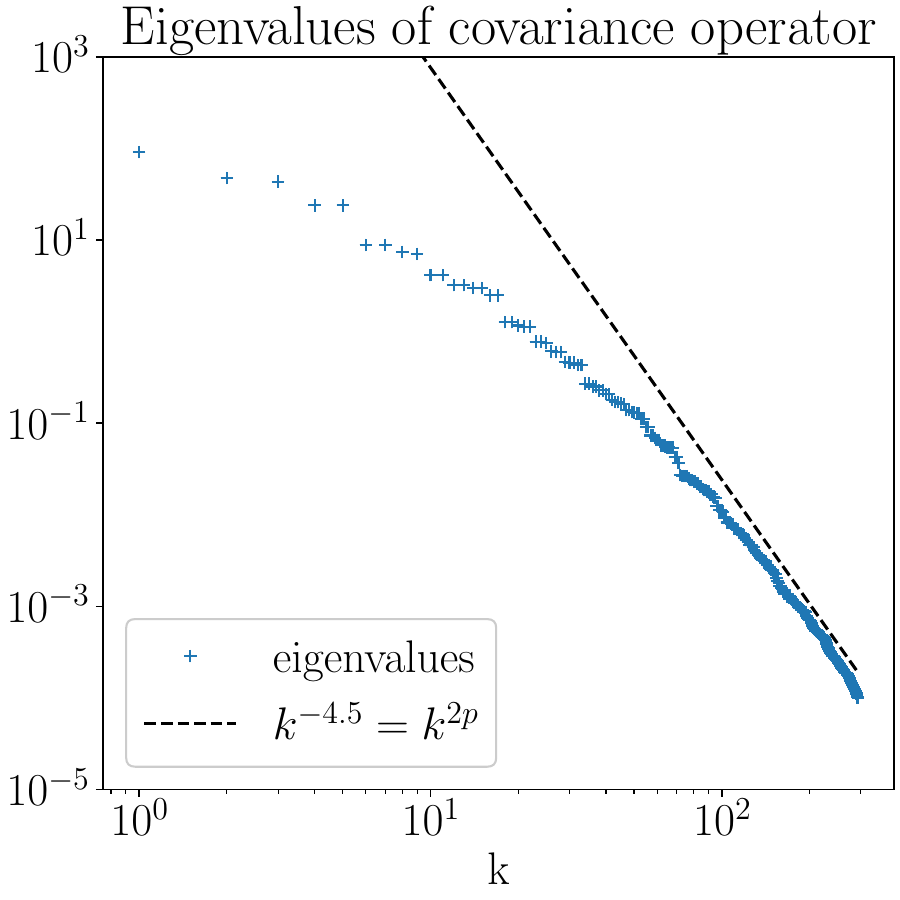}
    \includegraphics[height=0.45\textwidth]{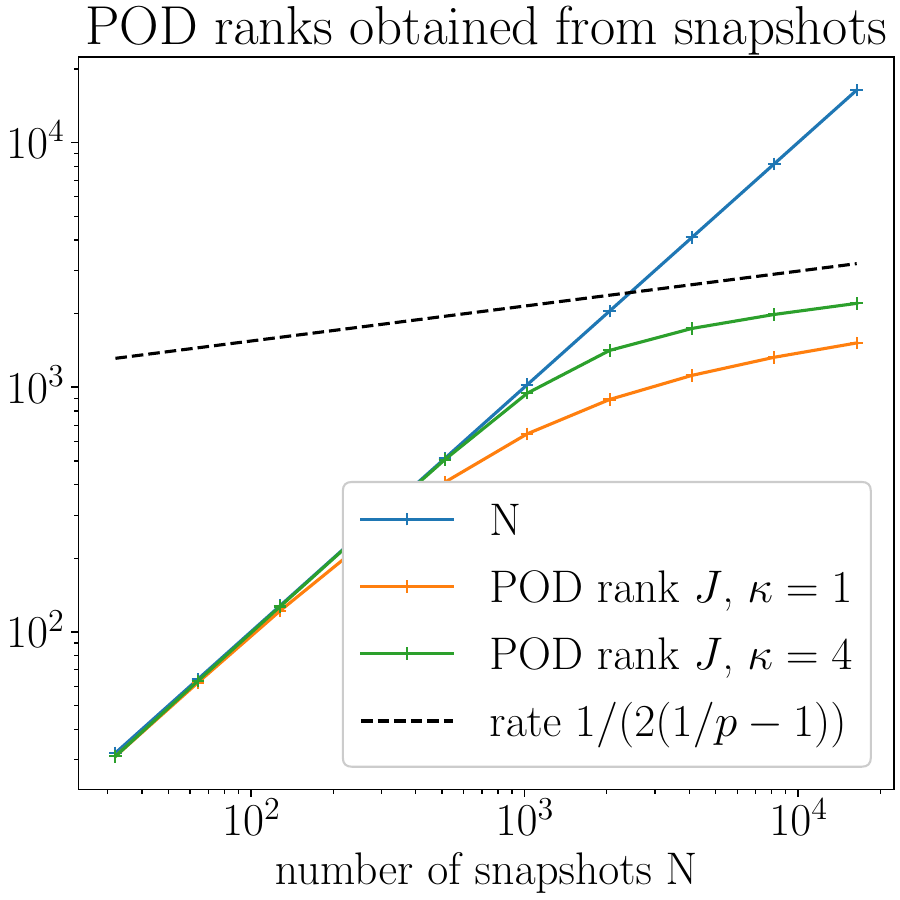}
    \caption{Decay of the eigenvalues of the covariance operator \cref{eq:surfacecovarianceoperator} used to generate the domain deformations (left) and resulting POD-ranks for wavenumbers $\kappa=1$ and $\kappa=4$ (right).}
    \label{fig:eigenvaluedecay}
\end{figure}

\subsection{Implementation and computational setup}
For the generation of the samples we pursue a black-box approach based on the \texttt{C++} open source software library \texttt{BEMBEL}, see \cite{DHK+20}, which is able to perform the required computations within the isogeometric setting \cite{CHB2009,DHJM2022}. For the generation of the training and test samples we rely on Halton points, implying $\alpha=1-\delta$ for any $\delta>0$ throughout the manuscript, and in particular in \cref{cor:PODrankbound,cor:NNparamconv}.
The sampling process is accelerated by a hybrid \texttt{MPI} and \texttt{OpenMP} parallelization, where each sample is accelerated using \texttt{OpenMP}, and the sampling process is accelerated using \texttt{MPI}. The sampling process is performed on 16 nodes of a cluster, with each node being equipped with two Intel Xeon ``Sapphire Rapids'' 48-core processors with 2.10GHz frequency, making 1'536 cores in total, and one \texttt{MPI} process per node. Once the samples are generated, we perform all remaining computations within \texttt{Python} using the \texttt{scipy} package for linear algebra and the \texttt{PyTorch} package for the neural network computations. For the minimization of the loss functional, we use the \texttt{AdamW} optimizer with \texttt{weight\_decay=1} implemented in the \texttt{PyTorch} package and show the results for a single training run. The computations using \texttt{PyTorch} are carried out on a NVIDIA A100 GPU with 40GB RAM.

\subsection{Sampling}
In the following, we focus on the empirical verification of the a-priori bound of the POD rank from \cref{cor:PODrankbound} and the combined Galerkin-POD NN error estimate from \cref{cor:NNparamconv} in the asymptotics in the number of samples $N$. The asymptotic behavior in the Galerkin error and the truncation error for the encoder have been well investigated in the literature, see, e.g., \cite{CD2013,DKL14,HPS2015}, and a study in these parameters is computationally out of reach at the time of writing this article. Thus, in the following we fix those discretizations. For the domain deformations we use second order, globally continuous B-splines with a total of 4116 degrees of freedom. Following \cite{HPS2015}, using an incomplete pivoted Cholesky decomposition with a tolerance of $10^{-4}$ balances the truncation error with the discretization error, and we obtain a truncated approximate expansion \cref{eq:affine_parametric_representation} with 293 terms, i.e., $s=293$. For the solution of the boundary integral equations we employ second order B-splines with 5376 degrees of freedom and a direct solver. Following these choices, we generate $2^{15}=32\,768$ samples for the wavenumbers $\kappa=1$ and $\kappa=4$. For each wavenumber this takes about 34 hours on the 16 nodes of the above mentioned cluster. We will use the first half of these samples for our numerical studies and the second half to measure the POD and $NN$-generalization error.

\subsection{POD-ranks and generalization error}
We determine the POD-reduced basis by truncating the SVD of the snapshot matrix with respect to the Euclidean inner product such that the error in the Frobenius norm is below a certain tolerance $\tau$.
In accordance with \cref{cor:contest}, this tolerance needs to be asymptotically comparable to the Galerkin error, the truncation error for the decoder, and the sampling error of the quasi Monte Carlo sampling. We thus choose $\tau=\frac{1}{100\sqrt{N}}$. The resulting POD-ranks are illustrated in \cref{fig:eigenvaluedecay} and confirm the a-priori bound from \cref{cor:PODrankbound}. \Cref{fig:eigenvaluedecay} also illustrates that more than $10^3$ dimensions are required to capture the essential physics of the considered scattering problem. The predicted POD-error from \cref{cor:PODrankbound} is shown in \cref{fig:N-convergence}.

\subsection{Neural network convergence}
It remains to verify the convergence estimate from \cref{cor:NNparamconv}. To this end, inspired by \cref{cor:NNparamconv}, we choose a tanh NN with depth $\max\{1,\log_2(n)/2\}+2$ and width $n^2$ with $n=N^\frac{p}{(1-p)(2-p)}$ and $p=4/9$, cf.\ \cref{sec:model}. We use the the \texttt{AdamW} optimizer with regularization parameter $\alpha=1$, initial learning rate $10^{-3}$, $\varepsilon=10^{-12}$, and standard settings otherwise. As learning rate scheduler we use \texttt{ReduceLRONPlateau} with standard settings and $\varepsilon=10^{-20}$. In accordance with \cref{eq:stopping_criterion} we stop the iteration when $L_{\text{MSE}}<N^{-\alpha}$. To further stabilize and accelerate the training process we employ BatchNormalization, cf.\ \cite{IS2015}, as provided by the \texttt{PyTorch} package, and normalize the training data to values in $[-1,1]$. The results are illustrated \cref{fig:N-convergence} and seem to indicate that the theoretically predicted convergence rates from \cref{cor:NNparamconv} hold. We attribute the stagnation for larger values of $N$ to the gap between theory and practice discussed in \cref{sec:NN_training}.
\begin{figure}
    \centering
    \includegraphics[height=0.35\textwidth]{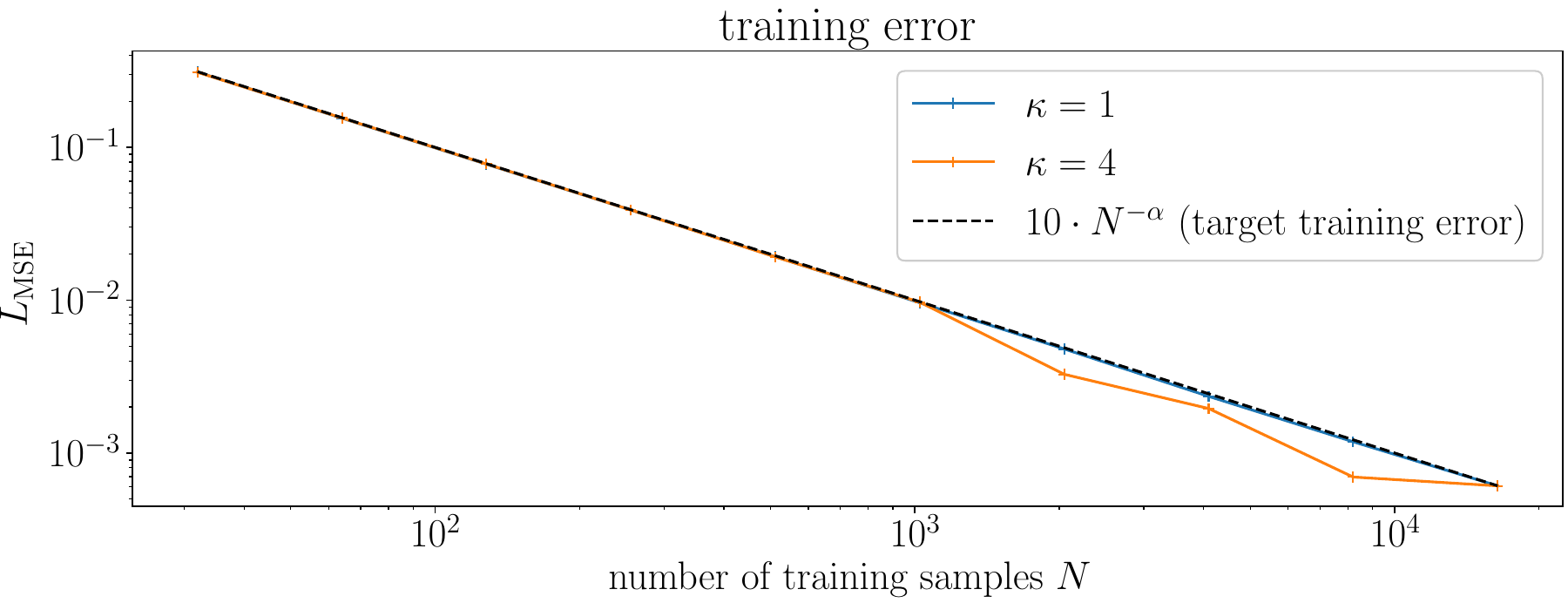}\\
    \includegraphics[height=0.35\textwidth]{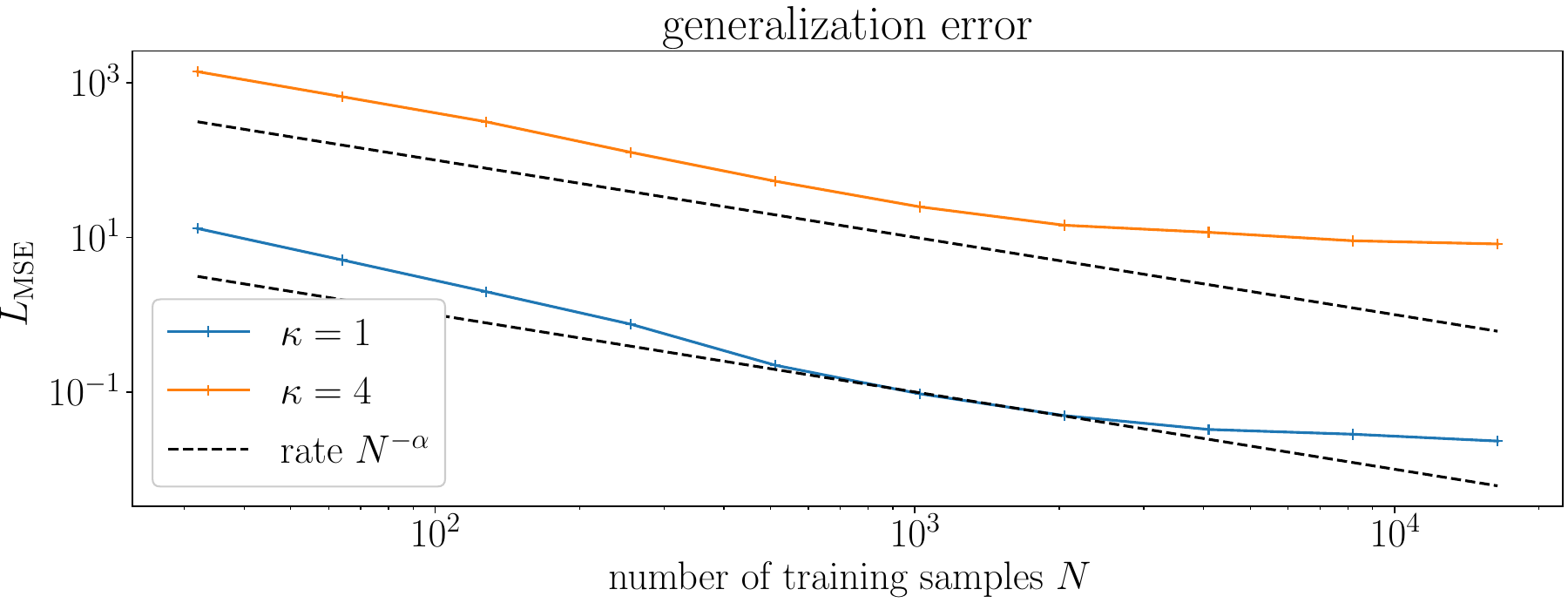}\\
    \includegraphics[height=0.35\textwidth]{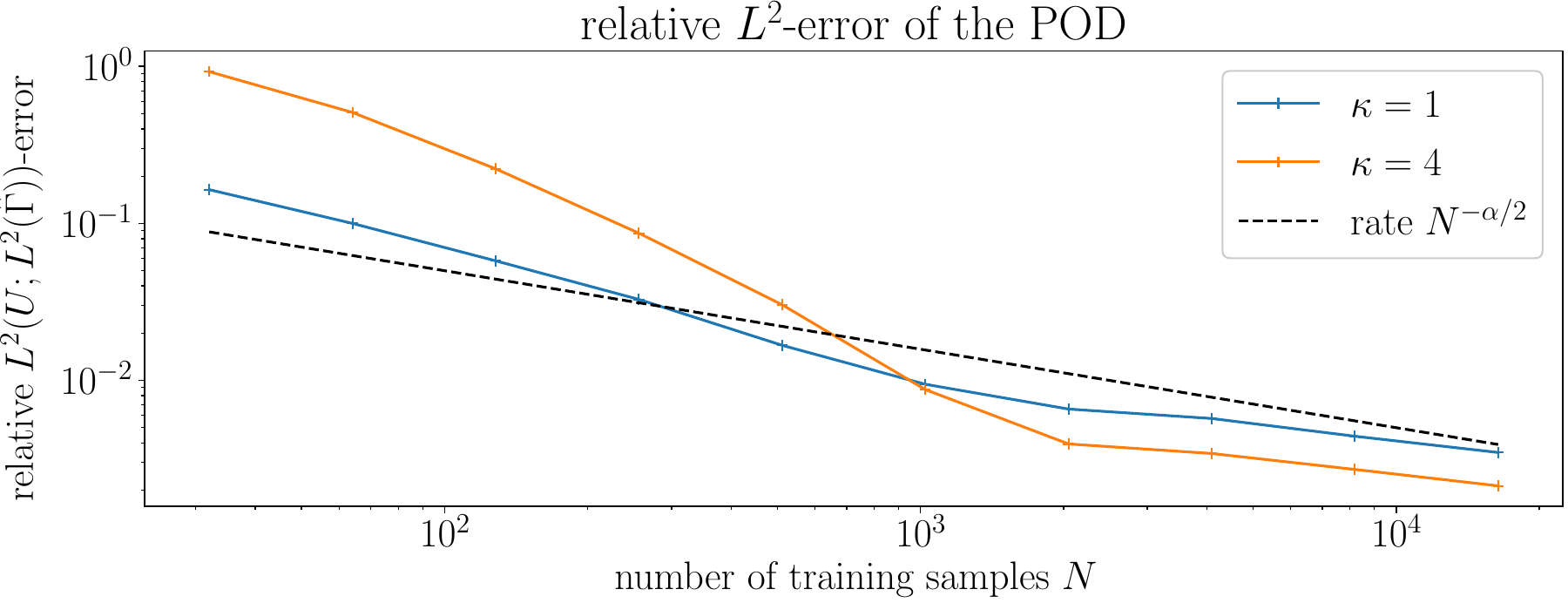}\\
    \includegraphics[height=0.35\textwidth]{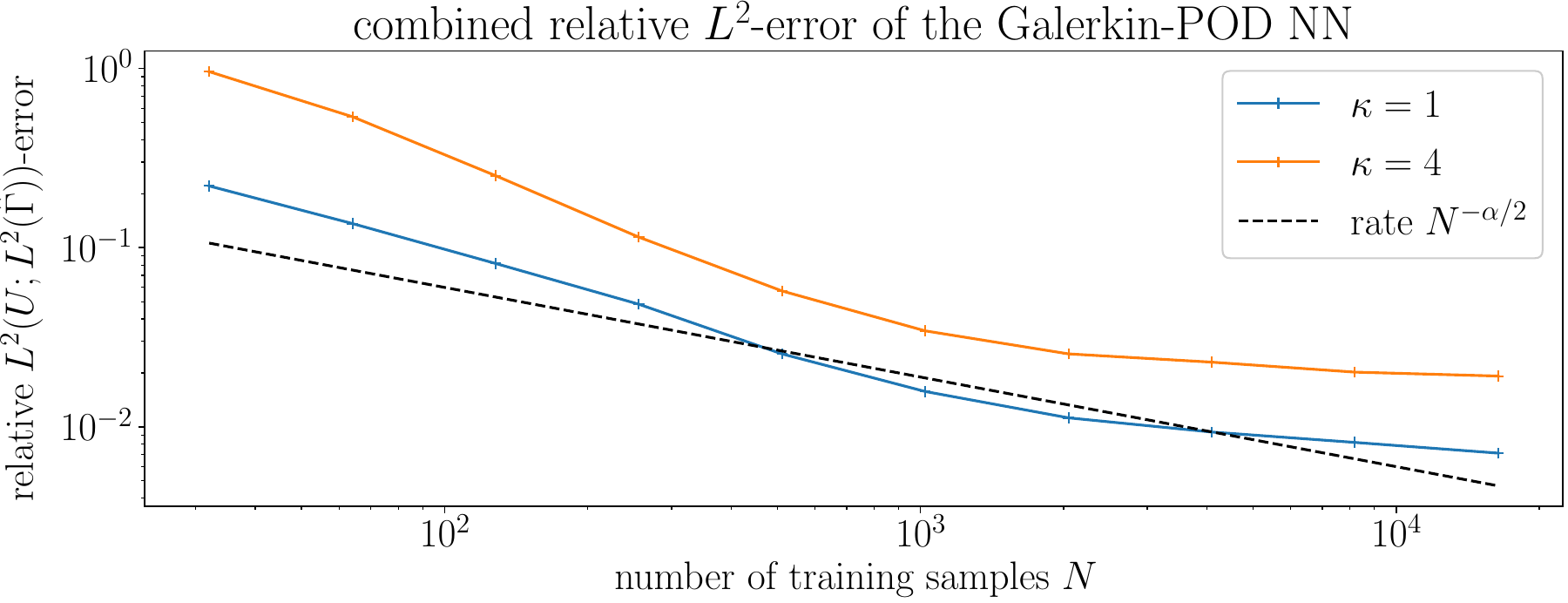}
    \caption{Training (top) and generalization errors (upper middle) for the Galerkin-POD neural network approximation. The $L^2(\mathbb{U};L^2(\widehat{\Gamma}))$-error of the POD (lower middle) confirms the theoretical estimates from \cref{cor:PODrankbound}. The combined Galerkin-POD NN (bottom) seems to confirm the results from \cref{cor:NNparamconv} up to the gap between theory and practice, c.f.~\cref{sec:NN_training}.}
    \label{fig:N-convergence}
\end{figure}
\section{Concluding Remarks}
\label{sec:concluding_remarks}
In this work, we consider the problem of approximating the parameter-to-solution
map associated to parameter-dependent variational problems using the so-called
Galerkin POD-NN method \cite{HU18}. We present a fully discrete error analysis accounting
for a variety of error sources in the construction of a basis of reduced order
by means of the Galerkin POD and discuss how this translates in their approximation 
using NNs in the Galerkin POD-NN. The analysis is applicable to a rather general class of variational problems and yields a-priori estimates on the POD ranks and NN parameters in terms of parametric regularity. Our numerical examples for three-dimensional wave scattering demonstrate that our analysis is applicable to black-box implementations where obtaining the training samples is achieved by a specialized software package, whereas POD and NN computations can be done with standard tools in \texttt{Python}. A remaining issue for practical applications is the NN training, which is based on assumption \cref{eq:MSEassumption}. This assumption, and the fact that the training procedure may get stuck in a local minimum, are the only gaps between theory and practice in our theory. While we did not have issues to reach the required training tolerance by our stopping criterion \cref{eq:stopping_criterion}, our numerical experiments indicate that more consideration and future research needs to be put into addressing \cref{eq:MSEassumption}. Further future work encompasses the use of multi-level NN strategies as the one proposed in \cite{heiss2021neural}, together with extension to time-dependent 
problems and electromagnetic wave scattering.

\section*{Data availability}
The snapshot data and the \texttt{Python} code for the POD and training of the NN are publicly available on the \texttt{bonndata} fileservers \cite{bonndatalink}.

\section*{Acknowledgements}
The authors appreciate access to the Marvin cluster of the University of Bonn and the support of the Deutsche Forschungsgemeinschaft (DFG, German Research Foundation) under Germany's Excellence Strategy -- GZ 2047/1, Projekt-ID 390685813 -- and project 501419255.

FH's work was funded by the Deutsche Forschungsgemeinschaft (DFG, German Research Foundation) – Project-ID 258734477 – SFB 1173 and the Austrian Science Fund (FWF) under the project I6667-N. Funding was also received from the European Research Council (ERC) under the Horizon 2020 research and innovation program of the European Union (Grant agreement No. 101125225).

\appendix
\section{Auxiliary Results}

\begin{lemma}\label{lmm:bpe_holomorphy_pod}
Let $X$ be a complex Hilbert space equipped with the inner product $\dotp{\cdot}{\cdot}_X$
and induced norm $\norm{\cdot}_X$.
    Let $\mathbb{U} \ni \y \mapsto f(\y) \in X$ be $(\boldsymbol{b},p,\varepsilon)$-holomorphic and continuous map 
	when $\mathbb{U}$ 	is equipped with the product topology.
	Then the map
	\begin{align}\label{eq:normextension}
	\mathbb{U}
	\ni
	\y
	\mapsto
    \norm{
    f(\y)
    }^2_{X}
	\in
	\IR
	\end{align}
	is $(\boldsymbol{b},p,\varepsilon)$-holomorphic
	and continuous with the same
	$\boldsymbol{b} \in \ell^p(\IN)$, $p \in (0,1)$,
	and $\varepsilon>0$.
\end{lemma}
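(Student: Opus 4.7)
The plan is to construct the holomorphic extension of $\|f(\y)\|_X^2$ through a basis expansion. Fixing an orthonormal basis $\{e_i\}_{i\geq 1}$ of $X$ and setting $a_i(\z) \coloneqq \dotp{f(\z)}{e_i}_X$, coordinatewise holomorphy of $f$ carries over to each $a_i$ on $\mathcal{O}_{\boldsymbol{\rho}}$ because $\dotp{\cdot}{e_i}_X$ is continuous and linear in the first argument. Since each $\mathcal{O}_{\rho_j}$ is symmetric about the real axis, the ``conjugated'' extension $b_i(\z) \coloneqq \overline{a_i(\bar{\z})}$ is also coordinatewise holomorphic on $\mathcal{O}_{\boldsymbol{\rho}}$ and coincides with $\overline{a_i(\y)}$ for real $\y$. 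Hence by Parseval's identity
\[
g(\z) \coloneqq \sum_{i \geq 1} a_i(\z)\,b_i(\z)
\]
is the natural candidate: on $\mathbb{U}$ it agrees with $\sum_i |a_i(\y)|^2 = \|f(\y)\|_X^2$.

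Boundedness will be the easy part. The key ingredient is that $\mathcal{E}_{\boldsymbol{\rho}}$ is invariant under complex conjugation, which together with Cauchy--Schwarz and Parseval will give
\[
|g(\z)| \leq \Big(\sum_i |a_i(\z)|^2\Big)^{1/2}\Big(\sum_i |a_i(\bar{\z})|^2\Big)^{1/2} = \|f(\z)\|_X\|f(\bar{\z})\|_X \leq C_\varepsilon^2.
\]
Uniform boundedness on $\mathbb{U}$ is then immediate from item~1 of \cref{def:bpe_holomorphy} applied to $f$, and continuity of $g$ on $\mathbb{U}$ in the product topology follows by composing the continuous map $\y \mapsto f(\y)$ with the continuous map $v\mapsto\|v\|_X^2$.

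The main obstacle will be verifying coordinatewise holomorphy of $g$ on $\mathcal{O}_{\boldsymbol{\rho}}$, as the series is infinite. My plan is to fix an index $j$, the remaining coordinates $\z^\star_{-j}$, and a closed disk $\overline{D}\subset\mathcal{O}_{\rho_j}$, and then establish uniform convergence of the partial sums $\sum_{i=1}^N a_i b_i$ on $\overline{D}$. The crux is that Banach-valued holomorphic maps are continuous, so the set $K \coloneqq \{f(\z^\star_{-j},z_j): z_j \in \overline{D}\}\cup\{f(\bar{\z}^\star_{-j},\bar{z}_j): z_j \in \overline{D}\}$ is compact in $X$. Consequently, the continuous and pointwise-decreasing maps $v\mapsto \sum_{i>N}|\dotp{v}{e_i}_X|^2 = \|v-P_N v\|_X^2$ converge to zero uniformly on $K$ by Dini's theorem, and a Cauchy--Schwarz tail estimate transfers this to uniform convergence of $\sum_{i=1}^N a_i b_i$ on $\overline{D}$. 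Being the uniform-on-compacts limit of holomorphic functions, $g$ is holomorphic in $z_j$, which, together with the freedom in the choice of $j$ and $\overline{D}$, finishes item~2 of \cref{def:bpe_holomorphy}. Since the admissibility condition on $\boldsymbol{\rho}$ only involves $\boldsymbol{b}, p, \varepsilon$, the same triple is inherited by the extension.
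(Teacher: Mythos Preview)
Your proposal is correct but takes a different route from the paper. You expand in an orthonormal basis, setting $a_i(\z)=\dotp{f(\z)}{e_i}_X$ and $b_i(\z)=\overline{a_i(\bar\z)}$, and then argue that the infinite sum $g(\z)=\sum_i a_i(\z)b_i(\z)$ is coordinatewise holomorphic via uniform-on-compacts convergence of the partial sums, with the key compactness step handled by Dini's theorem on the image of a closed disk. The paper avoids the basis expansion altogether: it defines the extension directly as $g(\z)=\dotp{f(\z)}{f(\bar\z)}_X$---which coincides with your series by Parseval---and verifies coordinatewise holomorphy by computing $\partial_{z_j}g$ from the difference quotient, using only continuity and sesquilinearity of the inner product to obtain the product rule
\[
\frac{\partial g}{\partial z_j}(\z)=\Big(\frac{\partial f}{\partial z_j}(\z),\,f(\bar\z)\Big)_X+\Big(f(\z),\,\frac{\partial f}{\partial z_j}(\bar\z)\Big)_X.
\]
The paper's argument is shorter and does not implicitly rely on a countable orthonormal basis (your Dini/Weierstrass step tacitly assumes separability of $X$, which is harmless in the paper's applications but not part of the hypotheses). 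Conversely, you make the boundedness on $\mathcal{E}_{\boldsymbol\rho}$ explicit via Cauchy--Schwarz and the conjugation symmetry of Bernstein ellipses, whereas the paper leaves item~3 of \cref{def:bpe_holomorphy} to the reader.
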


\begin{proof}
	We proceed to verify \cref{def:bpe_holomorphy} item-by-item.
	Firstly, one can readily verify that the uniform boundedness of 
	the map introduced in \cref{eq:normextension}.
	
	Let $\boldsymbol\rho\coloneqq (\rho_j)_{j\geq1}$ 
	be any $(\boldsymbol{b},p,\varepsilon)$-admissible 
	sequence of numbers of numbers strictly larger than one.
   	We consider the complex extension of \cref{eq:normextension} to $\mathcal{O}_{\boldsymbol\rho} $
	given by
	\begin{align}\label{eq:hol_ext_norm}
	\mathcal{O}_{\boldsymbol\rho} 
	\ni
	\z
	\mapsto 
	g(\z)
	\coloneqq
	\dotp{f(\z)}{f(\overline{\z})}_X.
	\end{align}
	Observe that this extension is well-defined for each $\z \in \mathcal{O}_{\boldsymbol\rho} $
	since this straightforwardly implies  $\overline{\z} \in \mathcal{O}_{\boldsymbol\rho} $.

	Computing the complex derivative of $g(\z)$
	for $\z\in \mathcal{O}_{\boldsymbol\rho} $ we obtain
	\begin{equation}
	\begin{aligned}
	\frac{dg}{dz_j}(\z)
	=
	&
	\lim_{\snorm{h} \rightarrow 0^{+}}
	\frac{
		\dotp{f(\z + h \boldsymbol{e}_j)}{f(\overline{\z + h \boldsymbol{e}_j})}_X
		-
		\dotp{f(\z)}{f(\overline{\z})}_X
	}{h} 
	\\
	=
	&
	\lim_{\snorm{h} \rightarrow 0^+}
	\frac{
		\dotp{f(\z + h \boldsymbol{e}_j)}{f(\overline{\z + h \boldsymbol{e}_j})}_X
		-
		\dotp{f(\z)}{f(\overline{\z + h \boldsymbol{e}_j})}_X
	}{h} 
	\\
	&
	+
	\lim_{\snorm{h}\rightarrow 0^+}
	\frac{
		\dotp{f(\z)}{f(\overline{\z + h \boldsymbol{e}_j})}_X
		-
		\dotp{f(\z)}{f(\overline{\z})}_X
	}{h} 
	\\
	=
	&
	\lim_{\snorm{h} \rightarrow 0^+}
	\dotp{
		\frac{
			f(\z + h \boldsymbol{e}_j)-f(\z)
		}{h}
	}{f(\overline{\z + h \boldsymbol{e}_j})}_X
	\\
	&
	+
	\lim_{\snorm{h} \rightarrow 0^+}
	\dotp{
		f(\z)
	}
	{
		\frac{
			f(\overline{\z+h \boldsymbol{e}_j})-f(\overline{\z})
		}{
			\overline{h}
		}
	}_X.
	\end{aligned}
	\end{equation}
	Exploiting the continuity of the inner product in each argument and that it is
	anti-linear in the second argument, yields
\begin{equation}
\begin{aligned}
	\frac{dg}{dz_j}(\z)
	=
	&
	\dotp{
		\lim_{\snorm{h} \rightarrow 0^+}
		\frac{
			f(\z+h \boldsymbol{e}_j)-f(\z)
		}{h}
	}{f(\overline{\z})}_X
	\\
	&
	+
	\dotp{
		f(\z )
	}
	{
		\lim_{\snorm{h}\rightarrow 0^+}
		\frac{
			f(\overline{\z+h \boldsymbol{e}_j})-f(\overline{\z})
		}{
			\overline{h}
		}
	}_X.
\end{aligned}
\end{equation}
	Observing that
	\begin{equation}\label{eq:complex_der_norm}
	\lim_{\snorm{h}\rightarrow 0^+}
	\frac{
		f(\overline{\z+h \boldsymbol{e}_j})-f(\overline{\z})
	}{
		\overline{h}
	}
	=
	\lim_{\snorm{h}\rightarrow 0^+}
	\frac{
		f(\overline{\z}+h\boldsymbol{e}_j)-f(\overline{\z})
	}{
		h
	}
	=
	\frac{df}{dz_j}(\overline{\z}),
	\end{equation}
	implies
	\begin{align}
	\frac{dg}{dz_j}(\z)
	=
	\dotp{\frac{df}{dz_j}(\z)}{f(\overline{\z})}_X
	+
	\dotp{f(\z)}{\frac{df}{dz_j}(\overline{\z}))}_X,
	\quad
	\z \in \mathcal{O}_{\boldsymbol\rho}.
	\end{align}
	Observe that, similarly as with \cref{eq:hol_ext_norm},
	the expression in \cref{eq:complex_der_norm} is well-defined
	for any $\z \in \mathcal{O}_{\boldsymbol\rho} $
	since this implies $\overline{\z} \in \mathcal{O}_{\boldsymbol\rho} $.
\end{proof}

\section{Quasi-Monte Carlo Integration}\label{sec:QMC}

Aiming to compute integrals of the kind
\begin{align}\label{eq:integral}
\mathcal{I}(f)
=
\int\limits_{\mathbb{U}}
f(\y) \mu(d\y),
\end{align}
for continuous $f\colon\mathbb{U}\to\mathbb{R}$, we perform a domain truncation from infinite dimensions to the finite dimensional setting.
To this end, let $s\in \IN$ be the truncation dimension and set $\mathbb{U}^{(s)}\coloneqq [-1,1]^s$.
This allows to approximate \cref{eq:integral} by a numerical approximation of an integral
over $\mathbb{U}^{(s)}$ of the form
\begin{align}
\mathcal{I}^{(s)}(f)
\coloneqq  
\int\limits_{\mathbb{U}^{(s)}}
f(\y) \mu^{(s)}(d\y),
\end{align}
where $f^{(s)}\colon\mathbb{U}^{(s)}\to\mathbb{R}$ and $f^{(s)}(y_1,\ldots,y_s)=f(y_1,\ldots,y_s,0,\ldots)$. One of the most common ways to evaluate the integral are Monte Carlo-type quadrature rules with equal weights of the form
\begin{align}\label{eq:MC}
\mathcal{Q}^{(N,s)}(f)
\coloneqq 
\frac{1}{N}
\sum_{n=1}^{N} f\left(2 \y^{(n)}-1\right),
\end{align}
where $ f\left(2 \y^{(n)}-1\right)$ corresponds to the evaluation of the integrand in sampling points $\{\y^{(1)},\dots, \y^{(N)}\} \subset [0,1]^s$. Although plain vanilla Monte Carlo methods are well known to lead to slow convergence in the root mean square sense, quasi-Monte Carlo methods provide faster and rigorous convergence rates for $(\boldsymbol{b},p,\varepsilon)$-holomorphic integrands. In the following, we recall approximation estimates for the Halton sequence and IPL sequences.
\begin{lemma}[{Adaptation of \cite[Lemma 7]{HPS16}}]\label{lemma:Halton}
    Let $s\geq1$ and $\boldsymbol{\beta} = \{\beta_j\}_{j\in \mathbb{N}}$ be a positive number sequence, and let $\boldsymbol{\beta}_s = \{\beta_j\}_{j=1}^{s}$ denote the first $s$ terms.
	Assume that $\boldsymbol{\beta} \in \ell^p(\mathbb{N})$ for some $p\in(0,\frac{1}{3})$.
    Consider the function $f: \mathbb{U} 
    \rightarrow \mathbb{R}$ and assume that
    \begin{align}\label{eq:bound_param_der_1}
	\snorm{\left(\partial^{\boldsymbol{\nu}}_\y f\right)(\y)} 
	\leq 
	c\snorm{\boldsymbol{\nu}}! \boldsymbol{\beta}^{\boldsymbol\nu}_s, 
	\quad
	\text{for all }
	\boldsymbol{\nu} \in \mathbb{N}^s, 
	\quad 
	s\in \mathbb{N},
	\end{align}
    Assume that the sample points in \cref{eq:MC} are drawn according to the Halton sequence.
	Then
	\begin{equation}
	\snorm{
		\mathcal{I}^{(s)}(f)
		-
		\mathcal{Q}^{(N,s)}(f)
	}
	\leq
	C(\delta) N^{\delta-1},
	\end{equation}
	where $C(\delta) \rightarrow \infty$ as $\delta \rightarrow 0$.
\end{lemma}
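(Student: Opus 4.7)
\medskip

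\noindent\textbf{Proof plan.} The plan is to adapt the argument of \cite[Lemma 7]{HPS16} essentially verbatim, with the two changes that (i) the parameter domain here is $[-1,1]^s$ equipped with the uniform product measure rather than $[0,1]^s$, and (ii) the derivative bound \cref{eq:bound_param_der_1} is stated in terms of $\boldsymbol{\beta}_s$ rather than the original weights of \cite{HPS16}. Both changes are cosmetic: the first is absorbed by the affine change of variables $\y \mapsto 2\y-1$ already present in \cref{eq:MC} (which only rescales the $\boldsymbol{\beta}_s$ by a factor of $2$), and the second only affects constants.

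\medskip

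\noindent First I would apply the Koksma--Hlawka inequality on $[0,1]^s$ to the transported integrand $\widetilde{f}(\y)\isdef f(2\y-\mathbf{1})$, writing
\begin{equation*}
\snorm{\mathcal{I}^{(s)}(f)-\mathcal{Q}^{(N,s)}(f)}
\;\leq\;
V_{\mathrm{HK}}\big(\widetilde{f}\big)\,D^*_N(\mathcal{P}_N),
\end{equation*}
where $V_{\mathrm{HK}}$ denotes the Hardy--Krause variation (anchored at $\mathbf{1}$) and $D^*_N(\mathcal{P}_N)$ is the star-discrepancy of the Halton point set $\mathcal{P}_N\subset[0,1]^s$. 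The Hardy--Krause variation expands as a sum over nonempty subsets $\mathfrak{u}\subseteq\{1,\ldots,s\}$ of the $L^1$-norms of the mixed partial derivatives $\partial^{\mathfrak{u}}\widetilde{f}$. Plugging in \cref{eq:bound_param_der_1} with $\boldsymbol{\nu}$ the indicator of $\mathfrak{u}$, integrating over $[0,1]^{|\mathfrak{u}|}$, and collecting the factors of $2$ coming from the change of variables, I would obtain
\begin{equation*}
V_{\mathrm{HK}}\big(\widetilde{f}\big)
\;\leq\;
c\sum_{\emptyset\neq\mathfrak{u}\subseteq\{1,\ldots,s\}}\snorm{\mathfrak{u}}!\prod_{j\in\mathfrak{u}}(2\beta_j).
\end{equation*}

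\medskip

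\noindent Next I would invoke Atanassov's explicit bound on the star-discrepancy of the Halton sequence, $D^*_N(\mathcal{P}_N)\leq A_s(\log N)^s/N$ with a fully explicit constant $A_s$, and combine with the variation bound above. The key step, which is where $p<1/3$ enters, is to interpolate the rough factor $(\log N)^s$ against $N^{-\delta}$ for arbitrary $\delta\in(0,1)$, distributing the excess between the $\sum_{\mathfrak{u}}\snorm{\mathfrak{u}}!\prod_{j\in\mathfrak{u}}\beta_j$ series and the dimension-dependent Atanassov constant $A_s$. Under $\boldsymbol{\beta}\in\ell^p(\mathbb{N})$ with $p<1/3$ the resulting weighted sum is finite uniformly in $s$, and one concludes
\begin{equation*}
\snorm{\mathcal{I}^{(s)}(f)-\mathcal{Q}^{(N,s)}(f)}\leq C(\delta)\,N^{\delta-1},
\end{equation*}
with $C(\delta)\to\infty$ as $\delta\to 0^+$ because the interpolation constant blows up and because the factorials $\snorm{\mathfrak{u}}!$ need a geometric damping from $(2\beta_j)$ whose radius of convergence shrinks.

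\medskip

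\noindent\textbf{Main obstacle.} The routine parts are Koksma--Hlawka and the discrepancy bound. The delicate step is showing that the bookkeeping in the final paragraph is genuinely dimension-robust: the interplay between the factorials $\snorm{\mathfrak{u}}!$ arising from the derivative bound, the Atanassov constant $A_s$, and the factor $(\log N)^s$ forces precisely the $p<1/3$ restriction. I would follow the exact combinatorial balancing of \cite[Proof of Lemma 7]{HPS16}, verifying that all estimates used there only depend on \cref{eq:bound_param_der_1} and the $\ell^p$-summability of $\boldsymbol{\beta}$, so that no structural assumption beyond what is stated here is actually needed.
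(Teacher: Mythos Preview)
Your proposal is correct and follows exactly the approach the paper intends: the paper does not supply an independent proof of this lemma but states it as an adaptation of \cite[Lemma~7]{HPS16}, remarking afterwards that an inspection of that proof shows it only requires the derivative bound \cref{eq:bound_param_der_1} and the $\ell^p$-summability of $\boldsymbol{\beta}$, independent of the underlying model problem. Your sketch simply makes this inspection explicit (Koksma--Hlawka, Atanassov's discrepancy bound, and the combinatorial balancing forcing $p<1/3$), which is precisely the content of the cited proof with the cosmetic adjustments you identify.
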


As explained previously, IPL rules have been proven to deliver convergence rates  
that are independent of the underlying parametric dimension, provided  that the integrand satisfies
specific parametric regularity estimates. The following result addresses this issue.

\begin{lemma}[{\cite[Theorem 3.1]{DKL14}}]\label{prop:QMC_error}
	For $m\geq1$ and a prime $\normalfont\text{b}$, let $N= \normalfont\text{b}^m$ denote the number of HoQMC points.
	Let $s\geq1$ and $\boldsymbol{\beta} = \{\beta_j\}_{j\in \mathbb{N}}$ be a positive number sequence, and let $\boldsymbol{\beta}_s = \{\beta_j\}_{j=1}^{s}$ denote the first $s$ terms.
	Assume that $\boldsymbol{\beta} \in \ell^p(\mathbb{N})$ for some $p\in(0,1)$.
	If there exists $c>0$ such that a function $f: \mathbb{U} \rightarrow \mathbb{R}$ satisfies for $\alpha\coloneqq  \left\lfloor \frac{1}{p} \right\rfloor +1$ that
	\begin{align}\label{eq:bound_param_der}
	\snorm{\left(\partial^{\boldsymbol{\nu}}_\y f\right)(\y)} 
	\leq 
	c\snorm{\boldsymbol{\nu}}! \boldsymbol{\beta}^{\boldsymbol\nu}_s, 
	\quad
	\text{for all }
	\boldsymbol{\nu} \in \{0,1,\dots,\alpha\}^s, 
	\quad 
	s\in \mathbb{N},
	\end{align}
	then the interlaced polynomial lattice rule of order $\alpha$ with $N$ points can be constructed in 
	\begin{align}
	\mathcal{O}\left(\alpha s N \log N + \alpha^2 s^2N\right)
	\end{align}
	operations, such that for the quadrature error holds
	\begin{align}
	\snorm{\mathcal{I}^{(s)}(f) - \mathcal{Q}^{(N,s)}(f)} 
	\leq 
	C_{\alpha,\boldsymbol{\beta},b,p} N^{-1/p}
	\end{align}
	where the constant $C_{\alpha,\beta,b,p}<\infty$ is independent of $s$ and $N$.
\end{lemma}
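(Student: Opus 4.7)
The plan is to recognize that this lemma is essentially a direct restatement of \cite[Theorem 3.1]{DKL14}, and that the cleanest proof is to invoke that reference as a black box. If one wants to sketch the shape of the underlying argument, it follows the standard three-step template in the weighted-space worst-case error theory for higher-order quasi-Monte Carlo rules. First, I would recast the hypothesis \cref{eq:bound_param_der} as the statement that $f$ lies in a weighted reproducing kernel Hilbert space $\mathcal{W}_{s,\alpha,\boldsymbol{\gamma}}$ of functions on $\mathbb{U}^{(s)}$ with mixed smoothness $\alpha$ per coordinate, equipped with smoothness-driven product-and-order-dependent (SPOD) weights
\begin{equation*}
\gamma_{\mathfrak{u}} \;=\; \sum_{\boldsymbol{\nu}_{\mathfrak{u}} \in \{1,\dots,\alpha\}^{|\mathfrak{u}|}} \prod_{j \in \mathfrak{u}} \bigl(\nu_j!\,\beta_j^{\nu_j}\bigr)^2, \qquad \mathfrak{u} \subseteq \{1,\dots,s\},
\end{equation*}
which are tailored so that the factorial derivative bound \cref{eq:bound_param_der} immediately yields a norm estimate $\|f\|_{\mathcal{W}_{s,\alpha,\boldsymbol{\gamma}}} \leq c$ that is uniform in $s$.

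Second, I would invoke the component-by-component (CBC) construction theorem for IPL rules of order $\alpha$ in such SPOD-weighted spaces. This simultaneously delivers the $\mathcal{O}(\alpha s N \log N + \alpha^2 s^2 N)$ construction cost and a worst-case error bound of the form
\begin{equation*}
e^{\text{wce}}(\mathcal{Q}^{(N,s)}) \;\leq\; \frac{1}{N^{1/(2\lambda)}} \left( \sum_{\emptyset \neq \mathfrak{u} \subseteq \{1,\dots,s\}} \gamma_{\mathfrak{u}}^{\lambda} \prod_{j \in \mathfrak{u}} \varrho(\alpha,\lambda,\mathrm{b}) \right)^{1/(2\lambda)},
\end{equation*}
valid for every $\lambda \in (1/(2\alpha),\,1]$. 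Since $\alpha = \lfloor 1/p \rfloor + 1 > 1/p$, the choice $\lambda = p/2$ is admissible and turns the exponent $-1/(2\lambda)$ into the claimed $-1/p$. To finish, I would verify dimension independence of the prefactor by observing that the product structure of the SPOD weights, combined with $\boldsymbol{\beta} \in \ell^p(\mathbb{N})$, reduces the finiteness of the bracketed weighted sum—via a Jensen-type bound on the inner sum defining $\gamma_{\mathfrak{u}}$—to the convergence of $\sum_{j \geq 1} \beta_j^p < \infty$, which is exactly the $\ell^p$-summability hypothesis.

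The hard part would be the second step: the quantitative worst-case error estimate for CBC-constructed IPL rules in mixed-smoothness SPOD-weighted Walsh spaces is far from routine. It rests on the digit-interlacing construction, fine tail estimates on Walsh coefficients derived from factorial parametric derivative bounds, and an averaging argument over lattice generating vectors that realizes the CBC bound. This technical machinery is the entire content of \cite{DKL14}, so in the present manuscript it is appropriate to simply cite that result rather than reproduce its multi-page proof.
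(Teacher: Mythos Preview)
Your proposal is correct and matches the paper's treatment: the lemma is stated as a direct citation of \cite[Theorem~3.1]{DKL14} and is not proved in the paper at all, so invoking that reference as a black box is exactly what the authors do. Your sketch of the SPOD-weight and CBC machinery is accurate and goes beyond what the paper provides, but it is not required here.
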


\begin{remark}
Concerning \Cref{lemma:Halton,prop:QMC_error}, we highlight the following.
\begin{itemize}
    \item 
    To be precise, in \cite{HPS16},
    through a multivariate differentiation argument, it is shown that for a parametric elliptic problem arising from the so-called domain mapping approach, the parametric derivatives of the solution satisfy a bound of the form presented \cref{eq:bound_param_der_1}. This, in turn, is the key step to prove in \cite[Lemma 7]{HPS16} convergence of the Halton quadrature rule. An inspection of the proof reveals that the statement is valid as long as the 
    derivative bounds hold regardless of the underlying model problem. 
    \item
    As pointed out in \cite[Theorem 3.1]{DLS2016}, provided that a parametric map $\mathbb{U} \ni \y \mapsto f(\y) \in \mathbb{R}$ is 
$(\boldsymbol{b},p,\varepsilon)$-holomorphic
for some $\boldsymbol{b} \in \ell^p(\IN)$ for some $p \in (0,1)$ and $\varepsilon>0$ then the parametric multivariate
derivatives satisfy \cref{eq:bound_param_der_1,eq:bound_param_der}. Consequently, the $(\boldsymbol{b},p,\varepsilon)$-holomorphy property is the key to unlock the dimension-independent convergence rates stated in \cref{lemma:Halton,prop:QMC_error}.
\end{itemize}
\end{remark}
\bibliographystyle{siamplain}
\bibliography{ref2}

\end{document}